\newcommand{\highlightRegion}[2]{
    \begin{scope}
        \clip #2--cycle;
        \draw[#1, line width=1mm, pattern=north east lines, pattern color=#1] #2--cycle;
    \end{scope}
}
\newcommand{\drawGrid}[1]{
    \pgfmathtruncatemacro{\size}{#1}
    \foreach\x in {1,...,\size} { 
        \draw[fill=gray, gray] (\x,\x) circle (3pt);
        \draw[gray] (\x,0)--(\x,\x)--(\size+1,\x);
    }
    \draw[gray] (1,0)--(\size+1,0)--(\size+1,\size);
}
\newcommandx{\emile}[1]{\todo[inline,caption={\'Emile},backgroundcolor=green!25,bordercolor=green,]{#1}}
\newcommandx{\henning}[1]{\todo[inline,caption={Henning},backgroundcolor=blue!25,bordercolor=blue,]{#1}}
\newcommandx{\christian}[1]{\todo[inline,caption={Christian},backgroundcolor=orange!25,bordercolor=orange,]{#1}}
\theoremstyle{definition}
\newtheorem{theorem}{Theorem}[section]
\newtheorem{corollary}[theorem]{Corollary}
\newtheorem{lemma}[theorem]{Lemma}
\newtheorem{proposition}[theorem]{Proposition}
\newtheorem{definition}[theorem]{Definition}
\newtheorem{remark}[theorem]{Remark}
\DeclareMathOperator{\Av}{Av}
\newcommand{\Avp}{\Av^{+}}
\newcommand{\Avn}{\Av^{(n)}}
\DeclareMathOperator{\perms}{\mathcal{S}}
\DeclareMathOperator{\SE}{SE}
\DeclareMathOperator{\WI}{WI}
\DeclareMathOperator{\uperm}{uperm}
\DeclareMathOperator{\dperm}{dperm}
\DeclareMathOperator{\rl}{r}
\newcommand*{\mergecore}{\vee}
\newcommand*{\bstrip}[1]{#1^\times}
\newcommand*{\WIs}{\mathcal{I}}
\DeclareMathOperator{\UU}{U}
\DeclareMathOperator{\D}{D}
\DeclareMathOperator{\C}{C}
\DeclareMathOperator{\R}{R}
\DeclareMathOperator{\UD}{UD}
\DeclareMathOperator{\RC}{RC}
\DeclareMathOperator{\UR}{UR}
\DeclareMathOperator{\DR}{DR}
\DeclareMathOperator{\UDC}{UDC}
\DeclareMathOperator{\UDRC}{UDRC}
\DeclareMathOperator{\gfU}{\mathbf{F}}
\DeclareMathOperator{\gfUD}{\mathbf{W}}
\DeclareMathOperator{\gfUDRC}{\mathbf{Y}}
\DeclareMathOperator{\gfUDC}{\mathbf{G}}
\DeclareMathOperator{\gfDmUR}{\mathbf{H}}
\DeclareMathOperator{\gfUmDR}{\mathbf{J}}
\newcommand{\oeis}[1]{\href{https://oeis.org/#1}{#1}}
\newcommand{\shadetheboxesPM}[1]{
    \foreach \x/\y in {#1}
    \fill[pattern color = black!75, pattern=north east lines] (\x,\y) rectangle +(1,1);
}
\newcommand{\drawthegrid}[1]{
    \draw (0.01,0.01) grid (#1+0.99,#1+0.99);
}
\newcommand{\drawverticallines}[3]{
    \foreach \x in {#2}
    \draw[line width=#3] (\x+0.01,0.01) -- (\x+0.01,#1+0.99);
}
\newcommand{\drawhorizontallines}[3]{
    \foreach \y in {#2}
    \draw[line width=#3] (0.01,\y+0.01) -- (#1+0.99,\y+0.01);
}
\newcommand{\drawtheclpattern}[1]{
    \foreach \x/\y in {#1}
    \filldraw (\x,\y) circle (6pt);
}
\newcommand{\drawclpattern}[2]{
	\foreach[count=\x] \y in {#1}
	{
		\filldraw (\x,\y) circle (#2 pt);
	}
}
\newcommand{\drawspecialbox}[1]{
    \foreach \x/\y/\z/\w/\A in {#1}
    {
        \fill[color = white!100, opacity=1, rounded corners = 1.5pt] (\x+0.125,\y+0.125) rectangle (\z-0.125,\w-0.125);
        \draw[color = black, rounded corners = 1.5pt] (\x+0.125,\y+0.125) rectangle (\z-0.125,\w-0.125);
        \fill[black] (\x/2+\z/2,\y/2+\w/2) node {\A};
    }
}
\newcommand{\drawspecialboxlarge}[1]{
    \foreach \x/\y/\z/\w/\A in {#1}
    {
        \fill[color = white!100, opacity=1, rounded corners = 1.5pt] (\x+0.125,\y+0.125) rectangle (\z-0.125,\w-0.125);
        \draw[color = black, rounded corners = 1.5pt] (\x+0.125,\y+0.125) rectangle (\z-0.125,\w-0.125);
        \fill[black] (\x/2+\z/2,\y/2+\w/2) node {\Large \A};
    }
}
\newcommand{\drawsolidshadedbox}[1]{
    \foreach \x/\y/\z/\w/\A in {#1}
    {
        \fill[color = gray!50, opacity=1, rounded corners=1.5pt] (\x+0.125,\y+0.125) rectangle (\z-0.125,\w-0.125);
        \draw[color = black, rounded corners=1.5pt] (\x+0.125,\y+0.125) rectangle (\z-0.125,\w-0.125);
        \fill[black] (\x/2+\z/2,\y/2+\w/2) node {\A};
    }
}
\newcommand{\drawlabels}[1]{
	\foreach \x/\y/\lab in {#1}
	{
		\draw (\x + 0.5,\y + 0.5) node {\lab};
	}
}
\newcommandx{\patt}[9][4={},5={},6={},7={},8={},9=4]
{
	\scalebox{#1}
	{
		\begin{tikzpicture}[baseline=(current bounding box.center)]
			\useasboundingbox (0.0,-.3) rectangle (#2+1,#2+1.3);
			\shadetheboxesPM{#4}
			\draw (0.01,0.01) grid (#2+1-0.01,#2+1-0.01);

			\drawsolidshadedbox{#6}
			\drawspecialbox{#7}
			\drawspecialboxlarge{#5}
			\drawclpattern{#3}{#9}
			\drawlabels{#8}
		\end{tikzpicture}
	}
}
\newcommandx{\cpatt}[8][4={},5={},6={},7={},8={}]
{
	\scalebox{#1}
	{
		\begin{tikzpicture}[baseline=(current bounding box.center)]
			\useasboundingbox (0.0,-.3) rectangle (#2+1,#2+1.3);
			\shadetheboxesPM{#4}
			\draw (0.01,0.01) grid (#2+1-0.01,#2+1-0.01);

			\drawsolidshadedbox{#6}
			\drawspecialbox{#7}
			\drawspecialboxlarge{#5}
			\drawclpattern{#3}{4}

			\foreach \x/\y in {#8}
			{
				\draw[line width=1] (\x,\y) circle (7 pt);
			}
		\end{tikzpicture}
	}
}
\newcommandx{\metapatt}[8][6={},7={},8={}]
{
    \scalebox{#1}
    {
        \begin{tikzpicture}[baseline=(current bounding box.center)]
					\foreach \width/\height in {#2}
					{
						\useasboundingbox (0.0,-.3) rectangle (\width+1,\height+1.3);
            \shadetheboxesPM{#6}

            \foreach \pos/\type in {#4}
            {
                \ifthenelse{\equal{\type}{v}}
                {
                    \drawverticallines{\height}{\pos}{1.7pt}
                }
                {
								    \ifthenelse{\equal{\type}{d}}
                    {
                      \draw[densely dashed] (\pos,0) -- (\pos,\height+1);
                    }
										{
											\drawhorizontallines{\width}{\pos}{1.7pt}
										}
                }
            }

            \foreach \pos/\type in {#3}
            {
                \ifthenelse{\equal{\type}{v}}
                {
                    \drawverticallines{\height}{\pos}{0.6pt}
                }
                {
										\drawhorizontallines{\width}{\pos}{0.6pt}
                }
            }

            \drawsolidshadedbox{#8}
            \drawspecialbox{#7}

            \foreach \x/\y/\type in {#5}
            {
                \ifthenelse{\equal{\type}{a}}
                {
                    \draw (\x,\y) circle (6pt);
                    \filldraw (\x,\y) circle (3pt);
                }
                {
                    \filldraw (\x,\y) circle (4pt);
                }
            }
					}
        \end{tikzpicture}
    }
}
\newcommandx{\dpatt}[9][6={},7={},8={},9={}]
{
    \scalebox{#1}
    {
        \begin{tikzpicture}[baseline=(current bounding box.center)]
					\foreach \width/\height in {#2}
					{
						\useasboundingbox (0.0,-.3) rectangle (\width+1,\height+1.3);
            \shadetheboxesPM{#6}

            \foreach \pos/\type in {#4}
            {
                \ifthenelse{\equal{\type}{v}}
                {
                    \drawverticallines{\height}{\pos}{1.7pt}
                }
                {
								    \ifthenelse{\equal{\type}{d}}
                    {
                      \draw[densely dashed] (\pos,0) -- (\pos,\height+1);
                    }
										{
											\drawhorizontallines{\width}{\pos}{1.7pt}
										}
                }
            }

            \foreach \pos/\type in {#3}
            {
                \ifthenelse{\equal{\type}{v}}
                {
                    \drawverticallines{\height}{\pos}{0.6pt}
                }
                {
										\drawhorizontallines{\width}{\pos}{0.6pt}
                }
            }

            \drawsolidshadedbox{#8}
            \drawspecialbox{#7}

            \foreach \x/\y/\type in {#5}
            {
                \ifthenelse{\equal{\type}{a}}
                {
                    \draw9 (\x,\y) circle (6pt);
                    \filldraw (\x,\y) circle (3pt);
                }
                {
                    \filldraw (\x,\y) circle (4pt);
                }
            }

						\drawlabels{#9}
					}
        \end{tikzpicture}
    }
}
\newcommand{\mpattern}[4]{
  \raisebox{0.6ex}{
  \begin{tikzpicture}[scale=0.35, baseline=(current bounding box.center), #1]
  	\useasboundingbox (0.0,-0.1) rectangle (#2+1.4,#2+1.1);

    \shadetheboxesPM{#4}

    \drawthegrid{#2}

    \drawtheclpattern{#3}

  \end{tikzpicture}}
}
\pgfmathsetmacro{\patttablescale}{1.05}
\pgfmathsetmacro{\pattdispscale}{0.80}
\pgfmathsetmacro{\patttextscale}{0.5}
\newcommand{\mptwodec}{
    $\mpattern{scale=\pattdispscale}{2}{1/2, 2/1}{0/2, 0/1, 1/0, 1/1, 1/2, 2/1, 2/2}$
    }
\newcommand{\mptwoinc}{
    $\mpattern{scale=\pattdispscale}{2}{1/1, 2/2}{0/2, 0/1, 1/0, 1/1, 1/2, 2/1, 2/2}$
    }
\title{Enumeration of Permutation Classes and Weighted Labelled Independent Sets}
\author{Christian Bean\affiliationmark{1}
    \and \'Emile Nadeau\affiliationmark{2}
\and Henning Ulfarsson\affiliationmark{2}}
\affiliation{
    Universit\'e Paris 13, LIPN, France\\
    Reykjavik University, Iceland}
\keywords{permutation patterns, independent sets, Wilf-equivalence, random
sampling, enumeration}
\begin{document}
\maketitle

\begin{abstract}
    In this paper, we study the staircase encoding of permutations, which maps a
    permutation to a staircase grid with cells filled with permutations.
    We consider many cases, where restricted to	a permutation class,
    the staircase encoding becomes a bijection to its image.
    We describe the image of those
    restrictions using independent sets of graphs weighted with permutations.
    We derive the generating function for the independent sets and then for
    their weighted counterparts.
    The bijections we establish provide the enumeration of
    permutation classes.
    We use our results to
    uncover some unbalanced Wilf-equivalences of permutation classes and outline
    how to do random sampling in the permutation classes.
    In particular, we cover the classes $\mathrm{Av}(2314,3124)$,
    $\mathrm{Av}(2413,3142)$, $\mathrm{Av}(2413,3124)$, $\mathrm{Av}(2413,2134)$
    and $\mathrm{Av}(2314,2143)$, as well as many subclasses.
\end{abstract}


\section{Introduction}

A \emph{permutation of size n} is  an arrangement of the numbers
$1,2,\ldots, n$. For example, $43251$ is a permutation of size 5. The set of all
permutations is $\perms$, and the subset of permutations of size $n$ is $\perms_n$.
The \emph{standardization} of a
string $s$ of distinct integers is the permutation obtained by replacing the
$i$-th smallest entry by $i$. A permutation $\sigma$ \emph{contains} a
permutation
$\pi$ if $\pi$ is the standardization of a (not necessarily
consecutive) substring $s$ of $\sigma$. In this context, we say that $\pi$ is a
\emph{(classical) pattern} in $\sigma$. The substring $s$ is called an
\emph{occurrence} of
$\pi$ in $\sigma$. If $\sigma$ does not contain the pattern $\pi$, we say that
$\sigma$ \emph{avoids} $\pi$. We represent the permutation $\sigma$ on a
grid by
placing points at coordinates $(i, \sigma(i))$.
For example, the permutation
$\sigma=194532678$ is pictured in Figure~\ref{fig:pattern}.
The pattern $\pi = 4321$ is contained in $\sigma$ since it is the
standardization
of the substring $9532$ of $\sigma$. The permutation $\sigma$ avoids the pattern
$54321$ since it does not contain any decreasing substring
of size $5$.

\begin{figure}[htpb]
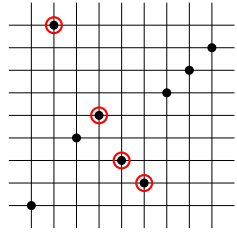

    \centering
    \includestandalone{figures/pattern}
    \caption{The permutation $194532678$ with an occurrence of the
        pattern $4321$ (circled in red).}%
    \label{fig:pattern}
\end{figure}

Given a pattern $\pi$ we define $\Av_n(\pi)$ as the set of permutations of size
$n$ that avoid $\pi$, and $\Av(\pi)$ as $\cup_{n \geq 0} \Av_n(\pi)$.
For a set of
patterns $P$, we let $\Av_n(P) = \cap_{\pi \in P} \Av_n(\pi)$, and
$\Av(P) = \cup_{n \geq 0} \Av_n(P)$. The set of non-empty permutations of $\Av(P)$ is denoted $\Avp(P)$.
A \emph{permutation class} $\mathcal{C}$ is a set
of permutations that is \emph{closed downwards}, in the sense that if $\sigma \in \mathcal{C}$
then $\pi \in \mathcal{C}$ for any pattern $\pi$ in $\sigma$. It turns out that
for a a permutation class $\mathcal{C}$ we can always find a set of patterns $P$ such
that $\mathcal{C} = \Av(P)$, for instance we can take $P = \perms \setminus \mathcal{C}$.
However, the the smallest set of patterns $B$ such that
$\mathcal{C} = \Av(B)$ is called the \emph{basis} of $\mathcal{C}$. There are cases where
the basis consists of infinitely many patterns, due to the existence of infinite anti-chains
in the containment order on $\perms$, but in this paper we only consider finite bases.

We define the \emph{sum} of two permutations $\alpha$ and $\beta$ as the
permutation
\[
    \alpha\oplus\beta=\alpha_1\cdots\alpha_n(\beta_1+n)\cdots(\beta_m+n).
\]
Similarly, the \emph{skew-sum}  of $\alpha$ and $\beta$ is
\[
    \alpha\ominus\beta=(\alpha_1+m)\cdots(\alpha_n+m)\beta_1\cdots\beta_m.
\]
In Figure~\ref{fig:sums}, we see the sum and skew-sum of $123$ and $21$.
If a permutation $\sigma$ can be expressed as the sum of two non-empty
permutations we say that $\sigma$ is \emph{sum-decomposable}.
The permutation $12435$ is sum-decomposable since it can be expressed as
$12\oplus213$.
Similarly, we say that a permutation is \emph{skew-decomposable} if it can
be written as the skew-sum of two non-empty permutations. The permutation
$43512$ is skew-decomposable since it is $213\ominus 12$.
A permutation is \emph{sum-indecomposable} if is not sum-decomposable and
\emph{skew-indecomposable} if it is not skew-decomposable.
The sum of a permutation $\sigma$ and a set of
permutations $P$ is the set $\sigma\oplus P = \{ \sigma\oplus\alpha : \alpha\in P\}$,
and likewise $\sigma\ominus P = \{ \sigma\ominus\alpha : \alpha\in P\}$. We
also define the sum, and skew-sum, of two sets of permutations in the obvious way.

\begin{figure}[htpb]
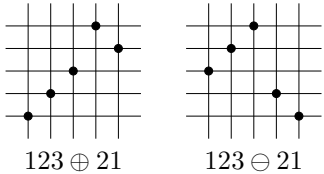

    \centering
    \includestandalone{figures/sums}
    \caption{The sum and skew-sum of two permutations.}%
    \label{fig:sums}
\end{figure}

\cite{bean2015pattern}
introduced the \emph{staircase encoding},
a function which maps a permutation to a staircase grid where cells are
filled with non-negative integers.
In this context, each integer is the size of the monotone sequence in its cell.
In this paper, we refine the staircase encoding as
a function which maps a permutation to a
staircase grid where cells are filled with permutations.
Using this function, we
retrieve the \emph{generating function} $\sum^{}_{n>0} \left|\Av_n(P)\right|
    x^n$ of several permutation classes.

We first recover the results for $\Av(123)$ and $\Av(132)$ from
\cite{bean2015pattern} in Section~\ref{sec:perm_on_grid}, using our more
refined encoding and weighted independent sets.
Our technique is then extended to describe the structure of $\Av(2314, 3124)$
and $\Av(2413, 3142)$ in Section~\ref{sec:3to4} and~\ref{sec:inf-up-down}.
In Section~\ref{sec:UDRC-core}, we recall the \emph{updown core graph}
introduced by \cite{bean2015pattern}
and use it to enumerate $\Av(2314, 3124, 2413, 3142)$, before introducing a new
core graph in Section~\ref{sec:new-core} that is used to give the
structure of $\Av(2314, 3124, 3142)$. Our notion of weighted independent sets
is then generalized to allow labelling.
This enables for a more refined choice
of permutations in our encoding, and is used to enumerate
$\Av(2413, 3142, 3124)$ in Section~\ref{sec:gen_inflation} and
$\Av(2413, 3124)$ in Section~\ref{sec:ru_cd}.
By allowing some
interleaving between cells in the staircase grid representation of a
permutation, we obtain the enumeration for $\Av(2413, 2134)$ and
$\Av(2314, 2134)$ in Sections~\ref{sec:rd_2134} and~\ref{sec:ru_2143}.
Finally, in Section~\ref{sec:wilf} we use
results from previous sections to prove two unbalanced Wilf-equivalences.
Our results handle in a
unified framework the enumeration of many permutation classes that were first
enumerated
in \cite{MR3211768,MR1948771,MR3633253,MR1754331,MR1997910,MR2156679,MR3659386}.
Moreover, the results also allow one to easily enumerate many subclasses of these classes.

To check whether these methods apply to a particular class we have added routines
to the python package \emph{Permuta}. For details see Section~\ref{sec:conclusion}.

\subsection{Mesh patterns}

We end this section with a short introduction to mesh patterns which are
utilised in some of our proofs. A reader
familiar with them might skip directly to Section~\ref{sec:perm_on_grid}.
A \emph{mesh pattern} $p$ is a pair $(\pi, \mathcal{R})$ with
$\pi\in\perms_k$ and $\mathcal{R}\subseteq {\{0,1,\ldots,k\}}^2$.
Pictorially, we represent a mesh pattern in a similar way as a classical pattern,
and we shade, for each $(x,y)\in \mathcal R$, the unit square with bottom left
corner in $(x,y)$. The mesh pattern
$p=(132, \{(0,0), (0,2), (1,2), (2,2), (2, 3), (3,2)\})$ is pictured below.
\begin{center}
    \mpattern{}{3}{1/1,2/3,3/2}{0/0,0/2,1/2,2/2,3/2,2/3}
\end{center}

Intuitively, an occurrence of a mesh pattern $p=(\pi, \mathcal{R})$ in a
permutation $\sigma$ is an occurrence of $\pi$ in $\sigma$ such that,
if we stretch the shading of $\pi$ onto $\sigma$,
$\sigma$ has no point in the shaded region.
For example, we consider the permutation $35142$ and pick two different
occurrences of $132$ in it (see Figure~\ref{fig:mesh-patt-occ}).
We stretch the shading of $p$ for both occurrences.
The one on the left is an occurrence of $p$ since no points of $\sigma$ are in
the shading, however, the right one is not an occurrence of $p$ since the $3$
of $\sigma$ is in the region corresponding to the box $(0,2)$ in $p$.
\begin{figure}[htpb]
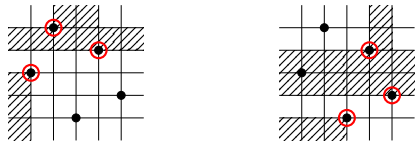

    \centering
    \includestandalone{figures/mesh-patt-occ}
    \caption{Two classical occurrences of $132$ in the permutation $35142$.
        On the left the classical
        occurrence is an occurrence of $p$ whereas the one on the right is not.}%
    \label{fig:mesh-patt-occ}
\end{figure}
Formally, the definition of mesh pattern containment is as follows.
\begin{definition}[\cite{branden2011meshpattern}]
    Let $\pi\in \perms_k$ and $\sigma\in\perms_n$.
    An occurrence of the mesh pattern $p=(\pi, \mathcal{R})$ in a permutation
    $\sigma$ is a subset $\omega$ of the plot of $\sigma$,
    $G(\sigma)=\{(i,\sigma(i)) : i\in \{1,2,\ldots,n\}\}$ such that there are
    order preserving injections
    $\alpha, \beta:\{1,\ldots, k\}\to \{1,\ldots, n\}$
    satisfying the following conditions.
    Firstly, $\omega$ is an occurrence of $\pi$ in the classical sense, \textit{i.e.},
    \begin{enumerate}
        \item[i.] $\omega = \{(\alpha(i), \beta(j)):(i,j)\in G(\sigma)\}$.
    \end{enumerate}
    Define
    $R_{ij} = [\alpha(i)+1, \alpha(i+1)-1]\times[\beta(j)+1,\beta(j+1)-1]$
    for $i,j\in\{1,\ldots,k\}$ where $\alpha(0)=\beta(0)=0$ and
    $\alpha(k+1)=\beta(k+1)=n+1$. Then the second condition is
    \begin{enumerate}
        \item[ii.] if $(i,j)\in\mathcal{R}$ then $R_{ij}\cap G(\sigma)=\emptyset$.
    \end{enumerate}
\end{definition}

If there is an occurrence of $p$ in $\sigma$ we say that $p$ is
\emph{contained in} $\sigma$.
Otherwise, we say that $\sigma$ \emph{avoids}
the mesh pattern $p$.

Unlike for classical patterns, it can occur that $\Av(p) = \Av(q)$ for two different
mesh patterns, $p$, $q$. For instance the mesh patterns $(21, \emptyset)$ and
$(21, \{ (1,0), (1,1), (1,2) \})$ have the same avoiding permutations, since a permutation
has an inversion if and only if it has a descent. Many of these \emph{coincidences}
are captured by the Shading Lemma \cite[Lemma 11]{shl}.

Throughout the paper, we will use the Shading Lemma to argue that the occurrence
of a classical pattern implies the occurrence of a mesh pattern. For example, in
the proof of Lemma~\ref{lem:uperm_subset_av} we argue that an occurrence of
$2314$ implies an occurrence of
\[ \mpattern{}{4}{1/2,2/3,3/1,4/4}{0/1,2/0}.\]
The argument goes as follows. Let $\sigma$ be a permutation with an occurrence of
$2314$. We can shade the box $(2,0)$ by replacing the $1$ of the occurrence by
the bottomost point in that box. The box $(0,1)$ can also be shaded by replacing
the $2$ of the occurrence with the leftmost point in that cell.

\section{Encoding permutations on grid}\label{sec:perm_on_grid}

A letter $\sigma_i$ in a permutation $\sigma$ is called a \emph{left-to-right minimum}
if
$\sigma_j>\sigma_i$ for all $j< i$.
We denote with $\Avn(B)$ the permutations in $\Av(B)$ with exactly $n$ left-to-right
minima.
For a coarser representation, take a permutation $\sigma$ in $\Avn(B)$ and
place the left-to-right minima on the main diagional of a $n \times n$ grid,
and the remaining points into the cells of the grid with respect to their
relative positions. We then replace the points in each cell by the permutation
they are forming in this cell.
This is called the the \emph{staircase encoding} of
$\sigma$ and is denoted $\SE(\sigma)$.
Figure~\ref{fig:staircase_encoding} shows the staircase
encoding of the permutation $659817432$. As permutations
contained
in cells in the same row or same column can interleave
in multiple ways, the
staircase encoding is not an injective map.
For example, the permutations $659814327$ and
$659718432$ both have the staircase encoding shown in
Figure~\ref{fig:staircase_encoding}~(c).

\begin{figure}[htpb]
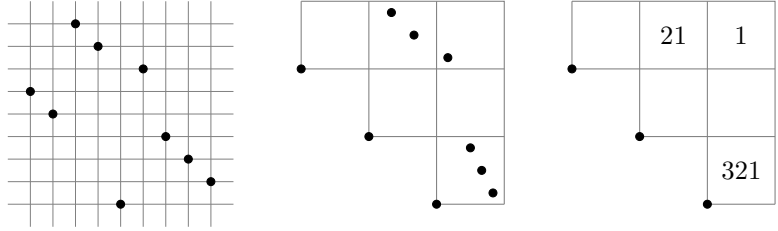

    \centering
    \includestandalone{figures/stair_enc}
    \caption{
        (a) The plot of $\sigma=659817432$.
        (b) The permutation $\sigma$ drawn on the staircase grid.
        (c) The staircase encoding of $\sigma$.
    }%
    \label{fig:staircase_encoding}
\end{figure}

By construction, the staircase encoding only uses the cells above and to the
right of the left-to-right minima.
We define the \emph{staircase grid} $B_n$ as the set of cells of
the staircase encoding of a permutation with $n$ left-to-right minima. The cells
are indexed using matrix coordinates,
\textit{i.e.}, $B_n=\{(i,j): 1\leq i\leq n \text{ and }i\leq j \leq n\}$.
We say that $B_0$ is the empty staircase
grid which corresponds to the staircase encoding of $\varepsilon$, the empty
permutation.
\cite{bean2015pattern} used the staircase grid to enumerate
$\Av(123)$ and $\Av(132)$. We briefly review these in terms of our refined
staircase encoding.

A cell in the staircase encoding of a permutation that avoids $123$
contains a permutation avoiding $12$, since any occurrence of $12$
together with one of the left-to-right minima would give an occurrence of
$123$. Moreover, the presence of a point in a cell forces other cells to be
empty.
For example, in
the encoding of Figure~\ref{fig:staircase_encoding}, we have the staircase
encoding of the $123$ avoiding permutation $659817432$.
As the cell $(1, 3)$
contains a point, the cell $(2, 2)$ must be
empty if the encoding is one of a permutation avoiding $123$.
These constraints are symmetric and
can be represented as a graph, where the cells of $B_n$ are the vertices
and there is an edge between every pair of cells that cannot both contain a
point of the permutation. This graph is called the \emph{up-core} of $B_n$.

\begin{definition}[Definition 4.3 in~\cite{bean2015pattern}]
    Let $n\geq 0$ be an integer. The \emph{up-core} of $B_n$ is the labelled
    undirected graph
    $\UU(B_n)$ with vertex set $B_n$ and edges between $(i,j)$ and $(k,\ell)$ if
    $i>k$, $j<\ell$.
\end{definition}

If a permutation avoids $132$, we get similar restrictions on the staircase
encoding. First, every cell avoids $21$ for a similar reason as above.
Second, some pairs of cells cannot both contain a point.
Those restrictions are also described by a graph called the
\emph{down-core}.
\begin{definition}[Definition 4.3 in~\cite{bean2015pattern}]
    Let $n\geq 0$ be an integer. The \emph{down-core} of $B_n$ is the labelled
    undirected graph
    $\D(B_n)$ with vertex set $B_n$ and edges between $(i,j)$ and $(k,\ell)$ if
    $i<k$, $j<\ell$ and the rectangle
    $\{i, i+1, \ldots k\}\times\{j,j+1, \ldots, \ell\}$ is a subset of $B_n$.
\end{definition}

See Figure~\ref{fig:up-core} for examples of $\UU(B_4)$ and $\D(B_4)$. We say
that a cell of the staircase encoding is \emph{active} if it contains a
non-empty permutation. From the construction of the graphs, we can see that
the set of active cells of the staircase encoding of a permutation in
$\Av(123)$ (resp.\ $\Av(132)$) is an independent set of $\UU(B_n)$ (resp.\
$\D(B_n)$).
The image under
the staircase encoding of $\Av(123)$ is the set of staircase encodings that are
independent sets of $\UU(B_n)$, where the permutations in every cell avoid $12$.

\begin{figure}[htpb]
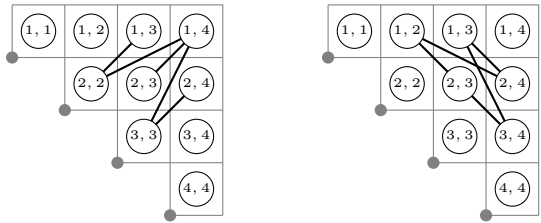

    \centering
    \includestandalone{figures/up-and-down-core}
    \caption{The up-core $\UU(B_4)$ on the left and
        the down-core $\D(B_4)$ on the right.}%
    \label{fig:up-core}
\end{figure}

In order to formalize our previous statement, we introduce \emph{weighted
    independent sets}, an independent set where a
weight is given to each of its vertices. In this paper, the
weights will always be permutations.
We denote by $\WI(G, S)$ the set of all weighted independents sets
of a graph $G$
where the weights are permutations from the set $S$.
Using this notation, we have that
\[\SE(\Avn(123))\subseteq \WI(\UU(B_n), \Avp(12))\]
and
\[\SE(\Avn(132))\subseteq \WI(\D(B_n), \Avp(21)).\]

The $i$-th \emph{row} of a permutation consists of the points with values
between the value of the $i$-th left-to-right minima and the $(i+1)$-st
left-to-right minima of the permutation.
Avoiding $123$ forces rows of the permutation to be \emph{decreasing}.
This means that for two cells $(i,j)$ and $(i,k)$ with $j<k$ the points in
$(i,j)$ are above the points in $(i,k)$, \textit{i.e.}, larger in value.
A decreasing row is pictured in Figure~\ref{fig:row_inc}.

\begin{figure}[htpb]
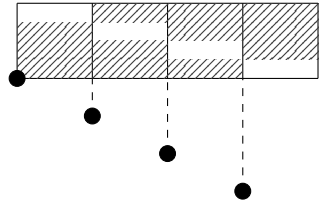

    \centering
    \includestandalone{figures/dec_row}
    \caption{A typical decreasing row.
        There are no points in the shaded regions.}%
    \label{fig:row_inc}
\end{figure}

For a $132$ avoiding permutation, the rows of the permutation are
\emph{increasing}, \textit{i.e.},
for a pairs of cells $(i,j)$ and $(i,k)$ with $j<k$
the points in $(i,j)$ are below the points
in $(i,k)$, \textit{i.e.}, lower in value.

The $j$-th \emph{column} of a permutation consists of the points with index
between the indices of the $j$-th and $(j+1)$-st left-to-right minima of
a permutation.
In a similar manner as above,
we say that the $j$-th column  is \emph{increasing}
(resp.\ \emph{decreasing}) if for
each pair of cells $(i,j)$ and $(k,j)$ with $i>k$ the points in $(i,j)$ are
on the left (resp.\ right) of the points in $(k,j)$.
The columns of a $123$ avoiding permutation are decreasing while the columns of
a $132$ avoiding permutation are increasing.

As mentioned before, the staircase encoding is not an injective map since
many permutations can have the same staircase encoding.
However, by restricting to the set of permutations with increasing
(resp.~decreasing) rows and columns the staircase encoding is an injection.
The inverse of the staircase encoding
restricted to permutations with increasing (resp.~decreasing) rows and columns
is $\dperm$ (resp.~$\uperm$).

\begin{definition}\label{def:uperm-dperm}
    For a staircase encoding $E$, we define
    \begin{itemize}
        \item $\uperm(E)$ as the permutation $\sigma$ with decreasing rows and
              columns such that $\SE(\sigma)=E$.
        \item $\dperm(E)$ as the permutation $\sigma$ with increasing rows and
              columns such that $\SE(\sigma)=E$.
    \end{itemize}
\end{definition}
Both $\uperm$ and $\dperm$ are injective maps from the set of staircase
encodings to the set of all permutations.
Lemma~\ref{lem:se-inverses} follows from the definition.
\begin{lemma}\label{lem:se-inverses}
    The maps $\SE\circ\uperm$ and $\SE\circ\dperm$ are the identity on the set
    of all staircase encodings.
\end{lemma}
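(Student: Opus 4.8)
The plan is to read the statement as a formal consequence of the way $\uperm$ and $\dperm$ were introduced, so that the real content is to check that these two maps are total functions on the set of all staircase encodings. Write $\mathcal D$ for the set of permutations whose rows and columns are all decreasing, and $\mathcal I$ for the set of those whose rows and columns are all increasing. By Definition~\ref{def:uperm-dperm}, $\uperm(E)$ is \emph{a} permutation in $\mathcal D$ with $\SE(\uperm(E))=E$, and $\dperm(E)$ is a permutation in $\mathcal I$ with $\SE(\dperm(E))=E$; equivalently, $\uperm$ and $\dperm$ are the inverses of the restrictions $\SE|_{\mathcal D}$ and $\SE|_{\mathcal I}$. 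Thus, once we know these restrictions are bijections onto the set of all staircase encodings, the equalities $\SE(\uperm(E))=E$ and $\SE(\dperm(E))=E$ hold tautologically for every encoding $E$, which is exactly the lemma. So I would devote the proof to establishing that $\SE|_{\mathcal D}$ (and symmetrically $\SE|_{\mathcal I}$) is a bijection, treating uniqueness and existence separately.

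For uniqueness I would argue that the decreasing-rows-and-columns condition pins down the relative position of every pair of points of $\sigma$ from the data of $E$ alone. Two points lying in different cells have their horizontal order fixed by the column indices and their vertical order fixed by the row indices (columns partition the positions and rows partition the values), except when the two cells share a row or share a column; the shared-row case is resolved by the decreasing-row rule and the shared-column case by the decreasing-column rule, and two points in the same cell are ordered by the permutation recorded there. Hence any two permutations of $\mathcal D$ with the same staircase encoding induce the same total order on their points and so coincide, giving injectivity; the same reasoning with the orientations reversed gives injectivity of $\SE|_{\mathcal I}$.

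For existence I would reconstruct a permutation from an arbitrary encoding $E$ on $B_n$ by placing the $n$ diagonal cells as points and then inserting the entries of each cell's permutation according to exactly the ordering rules above, standardizing the resulting point set to obtain some $\sigma\in\mathcal D$. The step I expect to be the main obstacle is verifying that $\SE(\sigma)=E$: one must confirm that the $n$ diagonal points are precisely the left-to-right minima of $\sigma$ — that no inserted cell point creates a spurious left-to-right minimum and none destroys a diagonal one — and that the induced row/column partition reassembles the inserted points into their original cells with their original pattern. This is a matter of checking that the reconstruction rule is compatible with the value ranges and position ranges that define the rows and columns, after which surjectivity of $\SE|_{\mathcal D}$ follows and the lemma is complete; the argument for $\dperm$ and $\mathcal I$ is identical with ``decreasing'' replaced by ``increasing'' throughout.
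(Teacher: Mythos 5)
Your proposal is correct, and its core is the same as the paper's: the paper gives no proof at all, remarking only that the lemma ``follows from the definition'', which is precisely your opening observation that Definition~\ref{def:uperm-dperm} makes $\SE(\uperm(E))=E$ and $\SE(\dperm(E))=E$ tautological. What you add, and what the paper leaves implicit, is the well-definedness of $\uperm$ and $\dperm$: the paper asserts injectivity of $\SE$ on permutations with monotone rows and columns only informally (in the sentence preceding the definition) and never addresses existence. Your uniqueness argument is complete and correct --- the encoding together with the monotone row/column rules fixes the relative order of every pair of points (the grid indices handle cells in different rows and different columns, the row rule resolves a shared row, the column rule a shared column, and the cell's weight a shared cell), so two permutations in $\mathcal{D}$ (or in $\mathcal{I}$) with the same encoding coincide. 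Your existence argument is only sketched, but the checks you flag are the right ones and they do succeed: a point placed in cell $(i,j)$ of the staircase (so $i\le j$) lies after the position of the $j$-th diagonal point and above the value of the $i$-th, and since left-to-right minima decrease in value, the $j$-th diagonal point is an earlier, smaller entry; hence no inserted point is a left-to-right minimum, the diagonal points are exactly the minima of the constructed permutation, and the induced rows and columns return each inserted point to its original cell with its original pattern. In short: the same tautological reading as the paper, plus the well-definedness verification that the paper omits.
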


\begin{remark}
    Formally, the staircase encoding is a map from the set of all
    permutations to the set of staircase grids filled with
    permutations.
    However, throughout the paper we consider the restriction of $\SE$ to
    a smaller set such that the restriction is a bijection to its image.
    Hence, when the context is clear (as in the theorem below),
    $\SE$ might refer to a restriction of the staircase encoding.
\end{remark}

\begin{theorem}[Lemma~2.2 in \cite{bean2015pattern}]\label{thm:123_132}
    The map $\SE$ is a bijection between $\Avn(123)$ and the weighted
    independent sets $\WI(\UU(B_n), \Avp(12))$. It is also a bijection between $\Avn(132)$
    and the weighted independent sets $\WI(\D(B_n), \Avp(21))$
\end{theorem}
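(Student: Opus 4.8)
The plan is to prove the first statement in detail and obtain the second by an entirely parallel argument, organising everything around the two mutually inverse maps $\SE$ and $\uperm$. As a starting point I would use the containment $\SE(\Avn(123))\subseteq\WI(\UU(B_n),\Avp(12))$ already established before the theorem: each cell avoids $12$ (otherwise an ascent inside a cell, completed by one of the left-to-right minima, yields a $123$), and the active cells are independent in $\UU(B_n)$ (otherwise two active cells in the adjacency relation $i>k,\ j<\ell$ again complete a $123$ with a minimum). This shows $\SE$ is at least well defined into the claimed codomain, so it remains to prove injectivity and surjectivity.

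For injectivity I would recall that a permutation avoiding $123$ has decreasing rows and decreasing columns, as discussed in the text. Hence every $\sigma\in\Avn(123)$ lies in the set on which $\uperm$ is defined as the inverse of $\SE$, and the defining property of $\uperm$ gives $\uperm(\SE(\sigma))=\sigma$. Thus $\uperm\circ\SE$ is the identity on $\Avn(123)$, and in particular $\SE$ restricted to $\Avn(123)$ is injective.

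For surjectivity, fix a weighted independent set $W\in\WI(\UU(B_n),\Avp(12))$ and set $\sigma=\uperm(W)$. Since $W$ is in particular a staircase encoding, Lemma~\ref{lem:se-inverses} yields $\SE(\sigma)=W$; moreover, because $\SE$ records the left-to-right minima of its argument on the diagonal of a grid whose size equals their number, the equality $\SE(\sigma)=W$ forces $\sigma$ to have exactly $n$ left-to-right minima. It remains to show $\sigma\in\Av(123)$, which is the heart of the argument. I would partition the points of $\sigma$ into the $n$ left-to-right minima, which form a decreasing sequence by definition, and the remaining \emph{cell points}. Two cell points in a common row or a common column form a descent, since the rows and columns of $\uperm(W)$ are decreasing, and two cell points inside a single cell form a descent because each cell carries a permutation avoiding $12$. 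For two cell points in cells $(i,j)$ and $(k,\ell)$ sharing neither row nor column, a check of the orientation conventions shows they form an ascent precisely when one cell is below-and-left of the other, that is, exactly when $\{(i,j),(k,\ell)\}$ is an edge of $\UU(B_n)$; independence of the active cells of $W$ excludes this. Consequently the cell points also form a decreasing sequence, so any increasing subsequence of $\sigma$ contains at most one left-to-right minimum and at most one cell point and therefore has length at most two. Hence $\sigma$ avoids $123$, giving $\sigma\in\Avn(123)$ with $\SE(\sigma)=W$ and proving surjectivity.

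The main obstacle is this final step, namely translating the purely combinatorial adjacency $i>k,\ j<\ell$ of $\UU(B_n)$ into the geometric statement that the corresponding cells host an ascent; this requires care with the orientation rules (a larger row index means a lower value, a larger column index means a position further right). The second statement is proved in the same fashion, replacing $\uperm$, decreasing rows and columns, $\UU(B_n)$ and $\Avp(12)$ by $\dperm$, increasing rows and columns, $\D(B_n)$ and $\Avp(21)$; here the extra requirement in the definition of $\D(B_n)$ that the spanning rectangle lie inside $B_n$ is exactly what is needed so that edges of the down-core capture the pairs of cells that, together with a left-to-right minimum, would create a $132$.
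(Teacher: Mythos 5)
Your proposal is correct in substance, but it is worth noting that the paper itself gives no proof of this theorem at all: the statement is imported verbatim as Lemma~2.2 of \cite{bean2015pattern}. The closest thing to an in-paper proof is the later, more general machinery --- Lemma~\ref{lem:uperm_subset_av} and Theorems~\ref{thm:inf_upcore} and~\ref{thm:inf_downcore}, which recover this theorem by taking $P=\{12\}$ (resp.\ $P=\{21\}$) --- and that machinery argues via mesh patterns and the Shading Lemma, repeatedly pushing an occurrence down/left until it contradicts an edge constraint. Your argument is genuinely different and more elementary: for the $123$ case you show that the minima and the cell points each form a decreasing sequence (same cell, same row, same column, and the edge case each give descents), so $\sigma$ is a union of two decreasing sequences and hence avoids $123$. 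This is cleaner for the special case at hand; the paper's mesh-pattern route is heavier but is what generalizes to the size-4 patterns $r_u,c_u$, where the two-decreasing-sequences trick has no analogue. Your verification that ``ascent between cells in distinct rows and columns $\iff$ up-core edge'' is exactly right under the paper's conventions (larger row index means smaller values, larger column index means later positions).

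One caution: your claim that the $132$ case goes through ``in the same fashion'' is too glib if read literally. The global argument does not transfer: in $\dperm(W)$ the cell points need \emph{not} form an increasing sequence, because two active cells $(i,j)$, $(k,\ell)$ with $i<k$, $j<\ell$ but $k>j$ (rectangle outside $B_n$) are non-adjacent in $\D(B_n)$ and their points form a descent --- e.g.\ $45312\in\Av(132)$, where the cell points $5$ and $2$ descend. And a union of two increasing sequences can of course contain $132$. So for the down-core one must argue locally: reduce any occurrence of $132$ to one whose ``$1$'' is a left-to-right minimum, then use the position and value of that minimum to force the rectangle condition $k\le j$ on the cells of the ``$3$'' and ``$2$'', contradicting independence. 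Your closing sentence shows you know this is the role of the rectangle condition, so the idea is present; just be aware that the $132$ half needs this minimum-anchored case analysis written out rather than a word-for-word translation of the $123$ argument.
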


By Theorems~2.4 and~3.3 from~\cite{bean2015pattern} we know that the number of
independent sets of size $k$ in $\UU(B_n)$, or $\D(B_n)$, is given by the
coefficient
of $x^n y^k$ in the generating function  $\gfU(x,y)$ that satisfies
\begin{equation}
    \label{eq:fxy}
    \gfU(x,y) = 1 + x\gfU(x,y) + \frac{xy\gfU{(x,y)}^2}{1-y(\gfU(x,y)-1)}.
\end{equation}

If we substitute $y$ with $\frac{x}{1-x}$ into $\gfU(x,y)$, we obtain
the generating function where the coefficient of
$x^n$ is the number of $123$ avoiding permutation of size $n$.

\begin{corollary}
    The generating function for $\Av(123)$ and $\Av(132)$ is
    $\gfU\left(x, \frac{x}{1-x} \right)$.
\end{corollary}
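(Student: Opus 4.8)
The plan is to translate the bijections of Theorem~\ref{thm:123_132} into an identity of generating functions by refining each vertex weight into its size generating function. The starting observation is that $\Avp(12)$ is precisely the set of non-empty decreasing permutations: for every size $m\geq 1$ there is exactly one decreasing permutation, so the generating function counting $\Avp(12)$ by size is $\sum_{m\geq 1} x^m = \frac{x}{1-x}$. By the same reasoning $\Avp(21)$ is the set of non-empty increasing permutations, whose size generating function is again $\frac{x}{1-x}$. This common value is exactly what will be substituted for $y$.

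Next I would fix the bookkeeping of sizes. Under $\SE$, a permutation $\sigma\in\Avn(123)$ is sent to a weighted independent set of $\UU(B_n)$ whose active cells carry non-empty permutations from $\Avp(12)$; since the $n$ left-to-right minima of $\sigma$ lie on the diagonal of $B_n$ and the remaining points are distributed among the active cells, the size of $\sigma$ decomposes as $n+\sum_{c}\lvert w_c\rvert$, where the sum ranges over active cells $c$ with weight $w_c$. Thus the variable $x$ appearing in $\gfU(x,y)$, which marks the grid size $n$, simultaneously accounts for the $n$ diagonal points, while the marker $y$ for the number of active cells is precisely the slot into which the content of a cell is to be inserted.

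With this in place the computation is a direct substitution. Summing over $n$ and over all independent sets, and replacing each active cell (originally marked by a single $y$) by the size generating function $\frac{x}{1-x}$ of its admissible weight, gives
\begin{align*}
    \sum_{N\geq 0}\bigl\lvert\Av_N(123)\bigr\rvert\, x^N
    &= \sum_{n\geq 0} x^n \sum_{I}\left(\frac{x}{1-x}\right)^{\lvert I\rvert},
\end{align*}
where the inner sum ranges over independent sets $I$ of $\UU(B_n)$. Since $[x^n y^k]\gfU(x,y)$ is the number of independent sets of size $k$ in $\UU(B_n)$ (as recalled just before the statement), the right-hand side is exactly $\gfU\bigl(x,\frac{x}{1-x}\bigr)$. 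The identical argument applies to $\Av(132)$: the bijection $\SE$ onto $\WI(\D(B_n),\Avp(21))$ has the same size decomposition, $\Avp(21)$ has the same size generating function $\frac{x}{1-x}$, and the independent-set generating function of $\D(B_n)$ coincides with that of $\UU(B_n)$, namely $\gfU$.

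I do not expect a serious obstacle, since all the structural content is already carried by Theorem~\ref{thm:123_132}. The only point requiring care is the double role of $x$: one must check that marking the grid size $n$ genuinely coincides with marking the $n$ diagonal points, so that substituting $y\mapsto\frac{x}{1-x}$ records the total permutation size without double-counting or omitting the left-to-right minima. Verifying this size decomposition is the crux, and once it is in hand the corollary reduces to the one-line substitution above.
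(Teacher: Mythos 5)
Your proposal is correct and follows essentially the same route as the paper: the paper also combines the bijection of Theorem~\ref{thm:123_132} with the fact that $[x^ny^k]\gfU(x,y)$ counts independent sets of size $k$ in $\UU(B_n)$ (equivalently $\D(B_n)$), and then substitutes $y \mapsto \frac{x}{1-x}$, the size generating function of the non-empty monotone permutations filling the active cells. Your write-up merely makes explicit the size bookkeeping (grid size $n$ plus cell weights) that the paper leaves implicit.
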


\section{Going from size 3 to size 4 patterns}\label{sec:3to4}
As seen in  Section~\ref{sec:perm_on_grid}, avoiding  the pattern  $123$
creates restrictions on which pairs of cells in the staircase grid can
contain points of the permutation. These restrictions were encoded by
the up-core graph. The same restrictions are
enforced on the set
of active cells of the staircase encoding for permutation avoding
$2314$ or $3124$.
The two
patterns are pictured in Figure~\ref{fig:ru_cu}(a). 
In this figure, the black points can be thought of as left-to-right minima of
the permutation and red points as points in the cells of the staircase grid.
Avoiding either
of those patterns ensures that two cells
connected by up-core
edges cannot be active simultaneously.
Moreover, the pattern $2314$, the \emph{row-up pattern}, denoted $r_u$,
forces the rows to be decreasing. Similarly, the pattern $3124$, the
\emph{column-up pattern}, denoted $c_u$, forces
the columns to be decreasing.

\begin{figure}[htpb]
    \centering
    \includestandalone{figures/ru_cu_patt}
    \caption{%
        (a) The  row-up pattern  on the  left and
        the  column-up pattern  on the right.
        (b) The  row-down pattern  on the  left and
        the  column-down pattern  on the right.
    }\label{fig:ru_cu}
\end{figure}

The down-core restrictions can also be enforced using size 4 patterns.
To do so, we introduce the \emph{row-down pattern}
$2413$, denoted $r_d$, and the \emph{column-down pattern} $3142$, denoted $c_d$.
As for the
up-core, thinking of the black points as the left-to-right minima of the permutation
and the red points as points in cells
(see Figure~\ref{fig:ru_cu}(b)), 
we can see this results in the same constraints as in the down-core.
Moreover, these patterns force rows and columns to be
increasing.
The above discussion is formalized in the next two lemmas.

\begin{lemma}\label{lem:row_col_interleaving}
    Let $\sigma$ be a permutation. Then
    \begin{enumerate}
        \item the rows of $\sigma$ are decreasing if $\sigma\in\Av(r_u)$,%
              \label{sublem:row_dec}
        \item the columns of $\sigma$ are decreasing if $\sigma\in\Av(c_u)$,%
              \label{sublem:col_dec}
        \item the rows of $\sigma$ are increasing if $\sigma\in\Av(r_d)$,%
              \label{sublem:row_inc}
        \item the columns of $\sigma$ are increasing if $\sigma\in\Av(c_d)$.%
              \label{sublem:col_inc}
    \end{enumerate}
\end{lemma}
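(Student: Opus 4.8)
The plan is to prove each of the four implications by contraposition, constructing an occurrence of the forbidden pattern out of the two ``offending'' cell points together with two left-to-right minima that play the role of the black frame points in Figure~\ref{fig:ru_cu}. Throughout I write $(p_1,v_1),\dots,(p_n,v_n)$ for the left-to-right minima of $\sigma$ listed from left to right, so that the positions satisfy $p_1<\dots<p_n$ while the values satisfy $v_1>\dots>v_n$, and I denote by $y_z$ the value of a point $z$. I set up the row convention so that the $i$-th minimum is the lower boundary of row $i$, i.e.\ a point in cell $(i,j)$ has position $>p_j$ and value strictly between $v_i$ and $v_{i-1}$ (with $v_0=+\infty$); in particular its value exceeds $v_i\ge v_j$. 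The only other combinatorial fact I rely on is that for two cells $(i,j),(i,k)$ in distinct columns $j<k$ the minimum $(p_k,v_k)$ sits strictly between them in position.

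For statement~\ref{sublem:row_dec} I suppose a row fails to be decreasing: there are cell points $p$ in $(i,j)$ and $q$ in $(i,k)$ with $j<k$ but with $p$ below $q$, so $p$ precedes $q$ in position and $y_p<y_q$. I take the frame points $A=(p_j,v_j)$ and $B=(p_k,v_k)$. The facts above give the position order $A<p<B<q$, and since $v_k<v_j<y_p<y_q$ the value ranks are $B=1$, $A=2$, $p=3$, $q=4$; hence $(A,p,B,q)$ reads $2314=r_u$ in position order, contradicting $\sigma\in\Av(r_u)$. Statement~\ref{sublem:row_inc} is proved verbatim after replacing ``$p$ below $q$'' by ``$p$ above $q$'': with the same two minima the quadruple now standardises to $2413=r_d$. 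Note that these row arguments never use that $p$ and $q$ share a row; only that they lie in distinct columns, which is exactly what makes the separating minimum $B$ available.

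The two column statements are handled the same way, but with one genuine twist. Here the offending points $p$ in $(i,j)$ and $q$ in $(k,j)$ share a column yet lie in distinct rows $i>k$, and the pattern I must build requires a frame point whose value lies strictly between $y_p$ and $y_q$. Such a minimum exists precisely because $p$ and $q$ occupy different rows: the value $v_{i-1}$ satisfies $y_p<v_{i-1}\le v_k<y_q$, and $(p_{i-1},v_{i-1})$ lies to the left of both $p$ and $q$ since $i-1<i\le j$. Taking $A=(p_{i-1},v_{i-1})$ and $B=(p_j,v_j)$ one checks $A<B$ in position with both preceding $p$ and $q$, and the value chain $v_j<y_p<v_{i-1}<y_q$; the resulting quadruple standardises to $3124=c_u$ when the column fails to be decreasing and to $3142=c_d$ when it fails to be increasing, giving~\ref{sublem:col_dec} and~\ref{sublem:col_inc}.

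The bulk of the work is the bookkeeping of these position and value inequalities, all of which follow mechanically from the monotonicity of $(p_m)$ and $(v_m)$ and the cell constraint $i\le j$. The one step I would double-check with care is the existence and placement of the separating minimum $v_{i-1}$ in the column cases, since that is the only point where the hypothesis of distinct rows is genuinely used (it also forces $i\ge 2$, so $v_{i-1}$ is a bona fide minimum); getting the row convention right, so that $y_p<v_{i-1}$ and $v_{i-1}<y_q$ both hold, is where an off-by-one would do real damage. Finally, as a consistency check one may observe that taking inverses preserves the set of left-to-right minima, exchanges rows with columns, and sends $2314\leftrightarrow 3124$ and $2413\leftrightarrow 3142$, so~\ref{sublem:col_dec} and~\ref{sublem:col_inc} can alternatively be deduced from~\ref{sublem:row_dec} and~\ref{sublem:row_inc}, provided one tracks the reversal of the cell indexing that transposition introduces.
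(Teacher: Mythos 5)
Your proof is correct and takes essentially the same route as the paper's: argue by contraposition and build an occurrence of the forbidden pattern from the two offending cell points together with two left-to-right minima, which for the row case are exactly the paper's choice of the minima heading the columns of the two cells (your $(p_j,v_j)$ and $(p_k,v_k)$). The only difference is one of completeness --- the paper writes out case (i) and declares the other three ``similar,'' whereas you carry out the bookkeeping for all four, including the one genuinely new ingredient in the column cases, namely the separating minimum $(p_{i-1},v_{i-1})$ whose existence requires $i\ge 2$.
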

\begin{proof}
    We only prove~\ref{sublem:row_dec}.\ since the other cases can be handled
    similarly.
    Let $\sigma$ be a permutation and
    suppose that one of the rows is not decreasing.
    This row has at least two active cells,
    therefore $\sigma$ has at least two left-to-right minima.
    Hence, when drawn on a staircase grid,
    this row contains two cells $A$ and $B$ such that $B$ is to the right of $A$,
    and $B$ contains a point higher than a point in $A$. These two points
    together with the left-to-right minima to the left of the columns
    containing $A$ and $B$
    form an occurrence of $r_u$ in $\sigma$.
\end{proof}

\begin{lemma}\label{lem:up_down_edge_const}
    Let $\sigma$ be a permutation with $n$ left-to-right minima and $C$ be the
    set of active cells of the staircase encoding of $\sigma$.
    Then $C$ is an independent set of
    \begin{enumerate}
        \item $\UU(B_n)$ if $\sigma\in \Av(r_u)\cup \Av(c_u)$,%
              \label{sublem:up-edges}
        \item $\D(B_n)$ if $\sigma\in \Av(r_d)\cup \Av(c_d)$.%
              \label{sublem:down-edges}
    \end{enumerate}
\end{lemma}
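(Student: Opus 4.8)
The plan is to prove both parts by contraposition. I assume the set $C$ of active cells is \emph{not} independent, fix an edge of the relevant core joining two active cells, choose one point in each of those cells, and show that this configuration already forces \emph{both} of the two size-$4$ patterns attached to that core. Since the hypothesis $\sigma\in\Av(r_u)\cup\Av(c_u)$ (resp.\ $\sigma\in\Av(r_d)\cup\Av(c_d)$) only asks $\sigma$ to avoid \emph{one} of the two patterns, producing both at once is a contradiction. Throughout I write $m_1,\dots,m_n$ for the left-to-right minima read from left to right, so their values strictly decrease while their positions strictly increase and $m_t$ occupies the diagonal cell $(t,t)$; a point in cell $(i,j)$ then lies to the right of $m_j$ in position and, in value, just above $m_i$ (and below $m_{i-1}$ when $i>1$).

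For part~\ref{sublem:up-edges}, an up-core edge between active cells $(i,j)$ and $(k,\ell)$ means $i>k$ and $j<\ell$; picking $A\in(i,j)$ and $B\in(k,\ell)$, the cell inequalities place $A$ strictly below and to the left of $B$, so $\{A,B\}$ is an ascent. I would complete it to $c_u=3124$ using the two minima immediately below and immediately above $A$ in value, namely $m_i$ and $m_{i-1}$; both lie to the left of $A$ since $i\le j$, and the value of $m_{i-1}$ falls between those of $A$ and $B$, so the four points read $3124$. I would complete the \emph{same} ascent to $r_u=2314$ using instead $m_i$ together with the minimum $m_\ell$ opening $B$'s column, which lands strictly between $A$ and $B$ in position while staying below $A$ in value; the four points then read $2314$. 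Here $i\ge 2$ because $i>k\ge 1$, so $m_{i-1}$ exists. Hence an up-core edge forces both $r_u$ and $c_u$.

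For part~\ref{sublem:down-edges}, a down-core edge additionally carries the rectangle hypothesis, which over $B_n$ is equivalent to $k\le j$; thus $i<k\le j<\ell$, and now $A\in(i,j)$ lies strictly above and to the left of $B\in(k,\ell)$, a descent. I would build $c_d=3142$ from $m_i$ (immediately below $A$ in value, hence left of $A$, with value between those of $B$ and $A$) together with the minimum $m_k$ immediately below $B$ in value; I would build $r_d=2413$ from $m_k$ together with the column-opening minimum $m_\ell$, which falls between $A$ and $B$. In each case a short check of the four value and position comparisons shows the points spell out the required pattern.

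The single load-bearing point---and where I expect the real work---is the placement of $m_k$ in the down-core case. Both $r_d$ and $c_d$ require a left-to-right minimum whose value lies below $B$ yet whose position is still to the left of $A$, and $m_k$ does this \emph{exactly} because $k\le j$ puts its column at or before $A$'s column. This is precisely the rectangle condition in the definition of $\D(B_n)$, and it explains why the down-core needs that extra hypothesis while the up-core does not: without $k\le j$ no suitable minimum need exist and neither $r_d$ nor $c_d$ would be forced. Everything else is routine bookkeeping with the decreasing chain of minima values and the row/column bands that cut out the cells, in the same spirit as the proof of Lemma~\ref{lem:row_col_interleaving}.
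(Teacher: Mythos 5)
Your proof is correct and takes essentially the same approach as the paper's: both argue the contrapositive, taking one point in each of two active cells joined by a core edge and completing this pair with nearby left-to-right minima into occurrences of \emph{both} size-four patterns, contradicting $\sigma\in\Av(r_u)\cup\Av(c_u)$ (resp.\ $\sigma\in\Av(r_d)\cup\Av(c_d)$). The only differences are cosmetic: for $c_u$ the paper uses the minimum of $B$'s row, $m_k$, where you use $m_{i-1}$ (both choices are valid), and the paper details only part (i), leaving part (ii) --- which you carry out in full, correctly pinpointing that the rectangle condition is exactly what places $m_k$ to the left of $A$ --- to the reader as an analogous argument.
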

\begin{proof}
    It is sufficient to show~\ref{sublem:up-edges}.,
    which we do by proving the contrapositive: suppose that
    two active cells of the staircase encoding of a permutation $\sigma$ are
    connected by an edge of $\UU(B_n)$.
    Hence, one of the cells is above and to the right of the
    other. Moreover, since they are in distinct rows and distinct columns of
    $B_n$,
    there exist three left-to-right minima as shown on
    Figure~\ref{fig:up-edges-cells}.

    \begin{figure}[htpb]
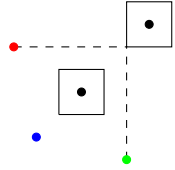

        \centering
        \includestandalone{figures/up-edge-cells}
        \caption{Two cells connected by an edge of the up-core. The blue, the
            red and the green points are distinct left-to-right minima.}%
        \label{fig:up-edges-cells}
    \end{figure}

    The red point and the blue point together with the two points in the active
    cells
    form a $c_u$ pattern. Replacing the red point by the green one yields an
    occurrence of the pattern $r_u$. Hence, $\sigma$ is not in the union $\Av(r_u)\cup \Av(c_u)$.
\end{proof}

In sections~\ref{sec:inf-up-down} to~\ref{sec:rd_2134}, we study different
combinations of the patterns $r_u$, $c_u$, $r_d$ and $c_d$. This leads to
different graphs, weights and constraints on the rows and columns of the
staircase grid.
With each combination of patterns, we describe a set of patterns $P$ that can be
added to the basis while keeping the structural properties of the class that we
need for enumeration. Even when not specified explicitly, we assume throughout the
paper that the empty permutation is not in the set $P$.
The results are presented in order of increasing complexity, with each section
introducing a new tool that is used to build different structural
descriptions and generating function arguments.
Table~\ref{tab:results-overview} presents an
overview of the results in the upcoming sections.
The notation $\bstrip{P}$ that appears in the table is introduced in
Definition~\ref{def:strip}.

\begin{table}[htpb]
    \centering
    \renewcommand{\arraystretch}{1.5}
    \begin{tabular}{l | m{5cm} | l}
        Permutation classes
         & Conditions on the set $P$
         & Enumeration result
        \\ \hline
        $\Av(r_u, c_u, 1\oplus P)$
         & $P$ is skew-indecomposable
         & Corollary~\ref{cor:gf_upcore}
        \\
        $\Av(r_d, c_d, 1\oplus P)$
         & $P$ is sum-indecomposable
         & Corollary~\ref{cor:gf_downcore}
        \\
        $\Av(r_u, c_u, r_d, c_d, 1\oplus P)$
         & No condition on $P$
         & Corollary~\ref{cor:gf_rdcdrucu}
        \\
        $\Av(r_u, c_u, c_d, 1\oplus P)$
         & $P$ is skew-indecomposable
         & Corollary~\ref{cor:gf_rucupi}
        \\
        $\Av(r_d, c_d, c_u, 1\oplus P)$
         & $\bstrip{P}$ is sum-indecomposable                                %
         & Corollary~\ref{cor:gf_rdcdpi}
        \\
        $\Av(r_d, c_u, 1\oplus P)$
         & $P$ is skew-indecomposable and $\bstrip{P}$ is sum-indecomposable
         & Corollary~\ref{cor:gf_rdcu}
        \\
        $\Av(r_d, 2134, P)$
         & $P$ satisfies conditions described in Section~\ref{sec:rd_2134}
         & Corollary~\ref{cor:rd_2134}
        \\
        $\Av(r_u, 2143, P)$
         & $P$ satisfies conditions described in Section~\ref{sec:ru_2143}
         & Corollary~\ref{cor:ru_2143}
    \end{tabular}
    \caption{Overview of the classes we cover in the upcoming sections.}%
    \label{tab:results-overview}
\end{table}

\section{Weighted independent sets of the up-core and the down-core}\label{sec:inf-up-down}

Lemmas~\ref{lem:row_col_interleaving} and~\ref{lem:up_down_edge_const}
say that every permutation in $\Av(r_u, c_u)$
can be constructed by first taking an independent set of the up-core of
a staircase grid, and weighting the cells with permutations in
$\Av(r_u, c_u)$.
We will show how this can be used to enumerate the class $\Av(r_u,c_u)$ and many of its
subclasses.
We first show an auxiliary result used in the proof of
our main results.

\begin{lemma}\label{lem:uperm_subset_av}
    Let $P$ be a set of skew-indecomposable permutations.
    Then for $n\geq 1$,
    \begin{equation}\label{eq:uperm_subset_av}
        \uperm(\WI(\UU(B_n), \Avp(r_u,c_u,P)))\subseteq \Avn(r_u, c_u, 1\oplus P).
    \end{equation}
\end{lemma}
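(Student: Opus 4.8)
The plan is to fix a weighted independent set $W\in\WI(\UU(B_n),\Avp(r_u,c_u,P))$, set $\sigma=\uperm(W)$, and verify the three conditions bundled in $\Avn(r_u,c_u,1\oplus P)$. By Lemma~\ref{lem:se-inverses} we have $\SE(\sigma)=W$, so $\sigma$ has exactly $n$ left-to-right minima and the permutation filling each active cell is one of the weights, hence lies in $\Avp(r_u,c_u,P)$; moreover $\sigma$ has decreasing rows and columns by the definition of $\uperm$ (Definition~\ref{def:uperm-dperm}). It then remains to show $\sigma\in\Av(r_u)$, $\sigma\in\Av(c_u)$ and $\sigma\in\Av(1\oplus P)$.

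I would first record two structural facts about how the points of $\sigma$ sit in the staircase grid. (i) Because the rows (resp.\ columns) of the grid are delimited by the \emph{values} (resp.\ \emph{positions}) of the left-to-right minima, every cell lies entirely above or entirely below each left-to-right minimum in value, and entirely to the left or entirely to the right of it in position (a minimum value/position is a boundary of the strips, never interior to one). (ii) If two points of $\sigma$ form an ascent (the left one smaller in value), then either they lie in a common cell or the left point is a left-to-right minimum; in particular the right point of an ascent is never a minimum. Fact (ii) is Lemmas~\ref{lem:row_col_interleaving} and~\ref{lem:up_down_edge_const} read through $\uperm$: two points in distinct non-minimal cells would, via the decreasing rows/columns and the absence of an up-core edge, force those cells into a skew relation (one lying entirely above and to the left of the other), which is incompatible with an ascent. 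The same reasoning yields the stronger statement that any two distinct active cells are pairwise in a skew relation, which I reuse below.

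To show $\sigma\in\Av(r_u)$ I argue by contradiction. Suppose $q_1q_2q_3q_4$ is an occurrence of $r_u=2314$, so the values satisfy $q_3<q_1<q_2<q_4$ with positions increasing in the index. By (ii) neither $q_2$ nor $q_4$ is a minimum, and the ascent $(q_2,q_4)$ forces $q_2$ and $q_4$ into a common cell $C$. Applying (ii) to the ascents $(q_1,q_2)$ and $(q_3,q_4)$, each of $q_1,q_3$ is either in $C$ or a left-to-right minimum. If both lie in $C$ then $C$ contains $2314=r_u$, contradicting that its weight avoids $r_u$. In every remaining case one reaches a contradiction with (i): whenever $q_3$ (the central low point) is a minimum it is straddled in position by $q_2,q_4\in C$, and whenever $q_1$ is a minimum while $q_3\in C$ the cell $C$ is straddled in value by $q_3<q_1<q_2$. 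I would streamline this case analysis exactly as announced in the introduction, using the Shading Lemma to first pass to the mesh pattern obtained from $2314$ by shading the cells $(0,1)$ and $(2,0)$, which lets me take $q_1$ leftmost and $q_3$ lowest in their strips. The case $\sigma\in\Av(c_u)$ is symmetric, interchanging rows/columns and values/positions (it is the $r_u$ argument under the grid transpose).

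Finally, for $\sigma\in\Av(1\oplus P)$ I use skew-indecomposability. Let $p\in P$ and suppose $\sigma$ contains $1\oplus p$, witnessed by a point $x$ lying below and to the left of an occurrence $O$ of $p$. Since distinct active cells are pairwise in a skew relation, an occurrence of $p$ meeting two or more cells and no minimum would exhibit $p$ as a skew sum, contradicting skew-indecomposability; an occurrence meeting a single cell and no minimum would be an occurrence of $p$ inside that cell, contradicting that its weight avoids $P$. Hence $O$ must use at least one left-to-right minimum $m$. But $x$ lies below and to the left of $m$, i.e.\ earlier in position and smaller in value, which is impossible since $m$ is a left-to-right minimum. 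This contradiction gives $\sigma\in\Av(1\oplus p)$ for every $p\in P$. This last step is the clean one, where skew-indecomposability forces $p$-occurrences through a minimum; I expect the main obstacle to be the $r_u$ (and symmetric $c_u$) analysis of the previous paragraph, where the interaction of up-core independence, decreasing rows/columns, and the minima must be handled exhaustively.
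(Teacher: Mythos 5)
Your proof is correct, but the half dealing with avoidance of $r_u$ and $c_u$ takes a genuinely different route from the paper. The paper first proves the $1\oplus\pi$ statement (your final paragraph, essentially identical in both proofs), and then derives $r_u$- and $c_u$-avoidance by passing to mesh patterns via the Shading Lemma, iterating ("repeat the argument on the new occurrence") to reach a maximally shaded occurrence $m_3$ or $m_4$, and splitting into the cases where the first and third points are both left-to-right minima (violating independence of the up-core) or both are not (producing $1\oplus r_u$ and invoking the first step, which needs the skew-indecomposability of $r_u$ itself). Your argument replaces all of this with the two structural facts (i) and (ii): an ascent between two non-minima forces a common cell, so in a hypothetical occurrence $q_1q_2q_3q_4$ of $2314$ the points $q_2,q_4$ share a cell $C$, and each of $q_1,q_3$ is either in $C$ (giving $r_u$ inside a single weight) or a minimum (giving a cell straddling a minimum in position or value, contradicting (i)). This is self-contained — it needs neither the $1\oplus P$ step, nor the Shading Lemma (your mesh-pattern remark is an optional streamlining, not load-bearing), nor the paper's finite-descent iteration — and is arguably more elementary. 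What the paper's route buys is expository: the Shading-Lemma-plus-mesh-pattern template introduced here is reused in the proofs of the later theorems (Sections~\ref{sec:UDRC-core}--\ref{sec:ru_2143}), whereas your argument is specific to this lemma. One small correction: facts (i) and (ii) should not be attributed to Lemmas~\ref{lem:row_col_interleaving} and~\ref{lem:up_down_edge_const} — those lemmas go in the opposite direction (pattern avoidance implies structure). Here the decreasing rows/columns come from Definition~\ref{def:uperm-dperm} of $\uperm$, the identification of active cells with the vertices of $W$ from Lemma~\ref{lem:se-inverses}, and the absence of up-core edges from the hypothesis that $W$ is an independent set; your stated justification is exactly this, so only the citation, not the substance, is off.
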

\begin{proof}
    First, for a skew-indecomposable permutation $\pi$, we will show
    \begin{equation}\label{eq:first_inc}
        \uperm(\WI(\UU(B_n), \Avp(\pi))) \subseteq \Avn(1\oplus\pi).
    \end{equation}
    Assume that $\sigma \in \uperm(\WI(\UU(B_n), \Avp(\pi)))$
    contains $1\oplus\pi$. Then, a rectangular region of the staircase grid of $\sigma$
    contains $\pi$.
    As the set of active cells is an independent set of the up-core, the
    rows and columns are decreasing, and $\pi$ is skew-indecomposable, $\pi$
    occurs in a single cell. This is a contradiction, since the weights are from
    $\Avp(\pi)$.

    We will complete the proof by showing that
    \begin{equation*}
        \uperm(\WI(\UU(B_n),\Avp(r_u, c_u))) \subseteq \Avn(r_u, c_u).
    \end{equation*}
    Let $\sigma$ be a permutation in $\uperm(\WI(\UU(B_n),\Avp(r_u, c_u)))$.
    If $\sigma$ contains either $r_u$ or $c_u$ then $\sigma$ contains one of
    the mesh patterns $m_1$ or $m_2$ in Figure~\ref{fig:mesh-pattern}, by the Shading Lemma.
    If the cell $(1,0)$ of $m_1$ (resp.\ cell $(0,1)$ of $m_2$)
    contains a point then, by picking the leftmost point in that region,
    the permutation $\sigma$ contains an occurrence of $m_2$ (resp.\ $m_1$)
    that is
    below (resp.\ to the left) of the occurrence we are considering.
    Repeat this argument on the new occurrence. As $\sigma$ is finite, we will
    repeat a finite number of times until we find an occurrence of $m_3$
    or $m_4$ in Figure~\ref{fig:mesh-pattern}.

    \begin{figure}[htb]
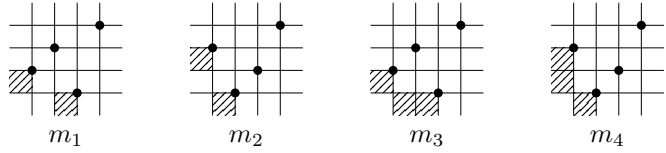

        \centering
        \includestandalone{figures/mp1_mp2}
        \caption{A permutation that contains $r_u$ or $c_u$
            contains $m_1$ or $m_2$, as well as $m_3$ or $m_4$.}%
        \label{fig:mesh-pattern}
    \end{figure}

    Assume $\sigma_{i_1}\sigma_{i_2}\sigma_{i_3}\sigma_{i_4}$ is an occurrence of
    $m_3$ in $\sigma$. Either $\sigma_{i_1}$ and $\sigma_{i_3}$ are
    both left-to-right minima in $\sigma$, or both are not left-to-right minima
    in $\sigma$.
    If they are left-to-right minima of $\sigma$ then $\sigma_{i_2}$ and
    $\sigma_{i_4}$ are in different columns of the staircase grid, and moreover
    different rows as rows are decreasing. This implies that two connected
    cells in $\UU(B_n)$ are active, contradicting the fact that an independent set
    was used. Therefore, $\sigma_{i_1}$ and $\sigma_{i_3}$ are not
    left-to-right minima of $\sigma$.
    There is, therefore, a point
    $(k, \sigma_k)$ with $k < i_1$ and $\sigma_k < \sigma_{i_3}$. This new point
    together with the original occurrence is an occurrence of
    $1 \oplus r_u$. As $r_u$ is skew-indecomposable, this contradicts
    Equation~\eqref{eq:first_inc}.

    Hence, we have shown that $\sigma$ avoids $m_3$.
    A similar argument shows that $m_4$ is also avoided, and hence $r_u$ and
    $c_u$ are avoided by $\sigma$.
\end{proof}

\begin{theorem}\label{thm:inf_upcore}
    Let $P$ be a set of skew-indecomposable permutations.
    Then $\SE$ is a bijection between
    $\Avn(r_u, c_u, 1\oplus P)$ and
    $\WI(\UU(B_n), \Avp(r_u, c_u, P))$.
\end{theorem}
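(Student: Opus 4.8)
The plan is to show that $\SE$ and $\uperm$ are mutually inverse bijections between $\Avn(r_u, c_u, 1\oplus P)$ and $\WI(\UU(B_n), \Avp(r_u, c_u, P))$. This breaks into three tasks: (i) $\SE$ sends the class into the weighted independent sets; (ii) $\uperm$ sends the weighted independent sets back into the class; and (iii) the two composites are the respective identities. Task (ii) is exactly Lemma~\ref{lem:uperm_subset_av}, so the only genuinely new work is (i), together with the routine verification of (iii).

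For (i), start with $\sigma\in\Avn(r_u,c_u,1\oplus P)$. Since $\sigma\in\Av(r_u,c_u)$, Lemma~\ref{lem:row_col_interleaving} gives that the rows and columns of $\sigma$ are decreasing, and Lemma~\ref{lem:up_down_edge_const}(\ref{sublem:up-edges}) gives that its set of active cells is an independent set of $\UU(B_n)$; hence $\SE(\sigma)$ is a weighted independent set of $\UU(B_n)$. Each cell weight is a pattern of $\sigma$ and so avoids $r_u$ and $c_u$; it remains to show each weight avoids every $\pi\in P$. I would first observe that no non-minimum point can lie in a diagonal cell $(i,i)$: a point in column $i$ has value larger than the $i$-th left-to-right minimum, while a point in row $i$ has value smaller than it, so every active cell $(i,j)$ satisfies $j>i$. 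Consequently each active cell lies strictly above and to the right of the $(i+1)$-st left-to-right minimum. Prepending that minimum to any occurrence of $\pi$ inside the cell produces an occurrence of $1\oplus\pi$ in $\sigma$, contradicting $\sigma\in\Av(1\oplus P)$. Thus every weight lies in $\Avp(r_u,c_u,P)$ and $\SE(\sigma)\in\WI(\UU(B_n),\Avp(r_u,c_u,P))$.

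For (iii), Lemma~\ref{lem:se-inverses} gives $\SE\circ\uperm=\mathrm{id}$ on all staircase encodings, in particular on the target set; combined with (ii) this already yields surjectivity of $\SE$. For the reverse composite, any $\sigma$ in the class has decreasing rows and columns by Lemma~\ref{lem:row_col_interleaving}, and by Definition~\ref{def:uperm-dperm} the permutation $\uperm(\SE(\sigma))$ is the unique permutation with decreasing rows and columns whose staircase encoding equals $\SE(\sigma)$; since $\sigma$ is itself such a permutation, $\uperm(\SE(\sigma))=\sigma$. Hence $\uperm\circ\SE=\mathrm{id}$ on the class, which gives injectivity and completes the bijection.

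The main obstacle is task (i), and within it the claim that every active cell avoids each $\pi\in P$. The delicate point is geometric rather than combinatorial: one must be sure an active cell always has a left-to-right minimum strictly to its lower left so that an in-cell copy of $\pi$ can be extended to $1\oplus\pi$, which is exactly where the observation that active cells are off-diagonal is used. It is worth noting that skew-indecomposability of the members of $P$ plays no role in this forward inclusion---there a copy of $\pi$ already sits in a single cell---but it is indispensable for the reverse inclusion supplied by Lemma~\ref{lem:uperm_subset_av}, where one must rule out a copy of $\pi$ spread across several cells.
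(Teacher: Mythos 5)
Your proof follows the same route as the paper's — forward inclusion from Lemmas~\ref{lem:row_col_interleaving} and~\ref{lem:up_down_edge_const}, reverse inclusion from Lemma~\ref{lem:uperm_subset_av} combined with Lemma~\ref{lem:se-inverses}, and injectivity of $\SE$ from the decreasing rows and columns — but your argument for the forward inclusion contains a false step. The claim that no non-minimum point can lie in a diagonal cell $(i,i)$, i.e.\ that every active cell $(i,j)$ has $j>i$, is wrong. Diagonal cells can be active: in $312$ the point $2$ occupies cell $(2,2)$, and in the paper's running example $659817432$ the cell $(3,3)$ is active (it holds $4,3,2$). Indeed, if diagonal cells could never be active the bijection would undercount, since of the five permutations in $\Av_3(123)$ the permutation $312$ has its encoding supported on the diagonal cell $(2,2)$. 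Your derivation rests on reading ``row $i$'' as the values \emph{below} the $i$-th left-to-right minimum; the convention consistent with the paper's figures, with $B_n$ containing its diagonal, and with the enumeration is that row $i$ consists of the values \emph{above} the $i$-th minimum (and below the $(i-1)$-st). Consequently the statement you derive — that each active cell lies strictly above and to the right of the $(i+1)$-st left-to-right minimum — fails precisely on diagonal cells: for cell $(i,i)$ the $(i+1)$-st minimum has \emph{larger} index than every point of that cell, so it cannot serve as the ``$1$'' in $1\oplus\pi$, and your contradiction argument does not go through for such cells.

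The repair is immediate and is what the paper leaves implicit: for any cell $(i,j)$ with $i\le j$, the $i$-th left-to-right minimum has smaller value and smaller index than every point of the cell (those points have values above the $i$-th minimum and indices after the $j$-th minimum), so an occurrence of $\pi\in P$ inside any single cell, diagonal or not, extends to an occurrence of $1\oplus\pi$ in $\sigma$. With that substitution your proof is correct and coincides with the paper's; in particular, your closing observation that skew-indecomposability of the elements of $P$ is needed only for the reverse inclusion supplied by Lemma~\ref{lem:uperm_subset_av}, and not for the forward one, is accurate.
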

\begin{proof}
    Let $\sigma$ be a permutation in $\Avn(r_u, c_u,1\oplus P)$.
    By Lemma~\ref{lem:up_down_edge_const}, the active
    cells of the staircase encoding of $\sigma$ form an independent set of
    $\UU(B_n)$, and the subpermutations in each cell of the staircase
    encoding are in $\Av(r_u, c_u, P)$. Hence,
    \[\SE(\Avn(r_u,c_u,1\oplus P))\subseteq \WI(\UU(B_n), \Avp(r_u, c_u, P))\]

    By applying $\SE$ on both sides of Equation~\eqref{eq:uperm_subset_av} in
    Lemma~\ref{lem:uperm_subset_av}, we get by Lemma~\ref{lem:se-inverses}
    \[\WI(\UU(B_n), \Avp(r_u, c_u, P))\subseteq \SE(\Avn(r_u,c_u,1\oplus P)).\]
    Hence,
    \[\SE(\Avn(r_u,c_u,1\oplus P))= \WI(\UU(B_n), \Avp(r_u, c_u, P)).\]
    Since permutations avoiding $r_u$ and $c_u$ have decreasing rows and columns,
    the map $\SE$ is injective when restricted to $\Avn(r_u,c_u,1\oplus P)$.
    Therefore, $\SE$ is a bijection between
    $\Avn(r_u, c_u, 1\oplus P)$ and
    $\WI(\UU(B_n), \Avp(r_u, c_u, P))$.
\end{proof}

The following corollary shows how to
compute the generating function for any basis covered by the theorem.

\begin{corollary}\label{cor:gf_upcore}
    Let $P$ be a set of skew-indecomposable permutations and $A(x)$ be the
    generating function of $\Av(r_u,  c_u,  P)$.
    Then $\Av(r_u, c_u, 1\oplus P)$ is enumerated by $\gfU(x,A(x)-1)$,
    where $\gfU(x, y)$ is the generating function in Equation~\eqref{eq:fxy}.
\end{corollary}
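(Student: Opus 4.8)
The plan is to combine the size-preserving bijection of Theorem~\ref{thm:inf_upcore} with the bivariate enumeration of independent sets recalled around Equation~\eqref{eq:fxy}, via the standard substitution principle for weighted generating functions.

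First I would observe that Theorem~\ref{thm:inf_upcore} gives, for each $n\geq 0$, a bijection between $\Avn(r_u, c_u, 1\oplus P)$ and $\WI(\UU(B_n), \Avp(r_u, c_u, P))$. The key is that the size of a permutation $\sigma$ (its number of points) splits under $\SE$ into two contributions: the $n$ left-to-right minima occupying the diagonal cells of $B_n$, and the points lying in the active cells, which are exactly the points of the weighting permutations. Thus if the underlying independent set has $k$ active cells carrying weights $w_1,\dots,w_k\in\Avp(r_u, c_u, P)$, then $\sigma$ has size $n+\sum_{i=1}^{k}|w_i|$.

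Next I would set up the generating function. Let $I_{n,k}$ denote the number of independent sets of size $k$ in $\UU(B_n)$; by Theorems~2.4 and~3.3 of \cite{bean2015pattern}, recalled above, we have $\gfU(x,y)=\sum_{n,k} I_{n,k}\,x^n y^k$. For a fixed independent set with $k$ cells, the weights on those cells are chosen independently from $\Avp(r_u, c_u, P)$, whose size generating function is $A(x)-1$ (the empty permutation being excluded). Hence the weighted contribution of each such set is $(A(x)-1)^k$, and summing over all independent sets of all staircase grids gives
\begin{equation*}
    \sum_{n,k} I_{n,k}\, x^n (A(x)-1)^k = \gfU(x, A(x)-1),
\end{equation*}
where the factor $x^n$ accounts for the $n$ left-to-right minima. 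By the bijection this is exactly the generating function enumerating $\Av(r_u, c_u, 1\oplus P)$ by size.

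The only point requiring care is the bookkeeping of sizes: one must verify that the variable $x$ in $\gfU(x,y)$ genuinely tracks the $n$ diagonal points and that the per-cell weight contributions are mutually independent, so that substituting $y=A(x)-1$ correctly grafts the weight generating function onto each vertex of the independent set. This substitution/composition step is the heart of the argument and the only place where the additive decomposition of the size of $\sigma$ is invoked; everything else is a direct translation of Theorem~\ref{thm:inf_upcore} into generating-function language.
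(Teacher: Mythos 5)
Your proposal is correct and follows essentially the same route as the paper's own proof: invoke the bijection of Theorem~\ref{thm:inf_upcore}, note that the size of a permutation decomposes as the number of left-to-right minima plus the sizes of the cell weights, and conclude that substituting $y = A(x)-1$ into $\gfU(x,y)$ yields the desired generating function. The only difference is that you spell out the substitution step (the sum $\sum_{n,k} I_{n,k}\,x^n (A(x)-1)^k$) explicitly, which the paper leaves implicit.
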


\begin{proof}
    By Theorem~\ref{thm:inf_upcore}, $\Av(r_u, c_u, 1\oplus P)$ is in 1-to-1
    correspondence with
    \[\bigsqcup_{n\geq0}\WI(\UU(B_n), \Avp(r_u, c_u, P)).\]
    Moreover, the size of the permutation obtained is the number of
    left-to-right minima added to the sizes of the weights of the
    independent set.
    This implies $\gfU(x,A(x)-1)$
    is the generating function for $\Av(r_u, c_u, 1\oplus P)$.
\end{proof}

Corollary~\ref{cor:gf_upcore} can be used to compute
the generating function of $\Av(2314, 3124)$, that was first enumerated
by \cite{MR1754331}. The generating function $A(x)$ for
$\Av(2314, 3124)$ satisfies
\begin{equation}\label{eq:schroeder_rec}
    A(x) = \gfU(x, A(x)-1).
\end{equation}
Solving gives
\begin{equation}\label{eq:schroeder_gf}
    A(x) = \frac{3-x-\sqrt{1-6x+x^2}}{2},
\end{equation}
which is the generating function for the
large Schr\"oder numbers, which can be found in the Online Encyclopedia of
Integer sequences~\cite{oeis} as sequence \oeis{A006318}.

Corollary~\ref{cor:gf_upcore} can also be used to enumerate the subclass
$\Av(2314, 3124, 1234)$,
first enumerated by \cite{MR3633253}. In this case,
the cells of the independent sets are filled with permutations in
$\Av(2314, 3124, 123)=\Av(123)$.
Since the generating function of the latter class is
$\frac{1-\sqrt{1-4x}}{2x}$, 
the generating function of $\Av(2314, 3124, 1234)$ is
\[ \gfU\left(x, \frac{1-\sqrt{1-4x}}{2x} -1 \right).\] 

There are three different skew-indecomposable permutations of size 3. Those
permutations are $123$, $132$ and $213$.
Therefore, Theorem~\ref{thm:inf_upcore} gives a structural
description of $\Av(2314, 3124)$ and subclasses obtained by also avoiding any
subset of $\{1234, 1243, 1324\}$. This gives $8$ classes with bases consisting
of only size four patterns. Since, there are $13$ skew-indecomposable
permutations of size 4, the theorem gives structural description of $2127$
bases
\footnote{This is the number of bases after removing redundancies.}
that contain size 4 and 5 patterns.

Theorem~\ref{thm:inf_upcore} extends the number of permutation classes that
the up-core describes. A similar method can be used for the down-core to
enumerate permutation classes beyond $\Av(132)$.
\begin{lemma}\label{lem:dperm_subset_av}
    Let $P$ be a set of sum-indecomposable permutations. Then for $n\geq 1$
    \[\dperm(\WI(\D(B_n), \Avp(r_d, c_d, P)))\subseteq \Avn(r_d, c_d, 1\oplus P).\]
\end{lemma}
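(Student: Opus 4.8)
The plan is to mirror the proof of Lemma~\ref{lem:uperm_subset_av}, interchanging throughout the up-core with the down-core, decreasing rows and columns with increasing ones, and skew-indecomposability with sum-indecomposability. As there, I would first isolate a single-pattern statement: for every sum-indecomposable permutation $\pi$,
\begin{equation}\label{eq:first_inc_down}
    \dperm(\WI(\D(B_n), \Avp(\pi))) \subseteq \Avn(1\oplus\pi).
\end{equation}
Suppose some $\sigma \in \dperm(\WI(\D(B_n), \Avp(\pi)))$ contained $1\oplus\pi$. Then the occurrence of $\pi$ sits inside an upper-right rectangular region of the staircase grid of $\sigma$. The key structural inputs are that the active cells form an independent set of $\D(B_n)$ and that, by parts~\ref{sublem:row_inc}.\ and~\ref{sublem:col_inc}.\ of Lemma~\ref{lem:row_col_interleaving}, the rows and columns of $\sigma$ are increasing. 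I would argue that these two facts force the active cells meeting the rectangle to be totally ordered in the direct-sum sense: the block carried by each active cell lies entirely below and to the left of the block in any active cell lying above-right of it. Consequently any pattern spanning two or more active cells is sum-decomposable, so the sum-indecomposable $\pi$ must occur within a single cell, contradicting that the weights are drawn from $\Avp(\pi)$.

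With \eqref{eq:first_inc_down} established, it remains to prove
\[
    \dperm(\WI(\D(B_n), \Avp(r_d, c_d))) \subseteq \Avn(r_d, c_d),
\]
after which intersecting the two inclusions yields the statement for the joint basis $r_d, c_d, 1\oplus P$ exactly as in the up-core case. For this second inclusion I would again pass to mesh patterns via the Shading Lemma: an occurrence of $r_d=2413$ or $c_d=3142$ in $\sigma$ produces an occurrence of one of a pair of mesh patterns analogous to $m_1,m_2$ of Figure~\ref{fig:mesh-pattern}, and the same minimality argument (repeatedly replacing a point in a shadable box by the extreme point of that region, terminating because $\sigma$ is finite) reduces to an occurrence of the fully shaded analogues of $m_3,m_4$. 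For such an occurrence I would test whether the two distinguished corner points are left-to-right minima: if they are, the occurrence exhibits two down-core--adjacent active cells, contradicting independence; if they are not, a left-to-right minimum lying below and to the left supplies an extra point turning the occurrence into $1\oplus r_d$ (resp.\ $1\oplus c_d$), which contradicts \eqref{eq:first_inc_down} since both $2413$ and $3142$ are sum-indecomposable.

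The main obstacle I anticipate is the first step: making rigorous the claim that a down-core independent set with increasing rows and columns arranges its active blocks into a sum-decomposable chain, so that no sum-indecomposable pattern can straddle two cells. This requires translating the failure of down-core adjacency for a pair of active cells, namely that either one cell is weakly left of or below the other or the connecting rectangle is not contained in $B_n$, into the horizontal-and-vertical separation of their point sets that the direct sum demands, using the increasing row and column conditions to pin down the relative order of points across cells. Once this geometric fact is in place, the second step is routine, since its case analysis is identical to the up-core argument with every comparison reversed.
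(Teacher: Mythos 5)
Your proposal is correct and is essentially the paper's own proof: the paper explicitly leaves this lemma to the reader, stating that the argument is similar to that of Lemma~\ref{lem:uperm_subset_av}, and your proof is exactly that mirrored argument (single-pattern inclusion for sum-indecomposable $\pi$ via the independent-set-plus-increasing-rows/columns decomposition, then the Shading Lemma and minimality argument with the case split on whether the distinguished points are left-to-right minima). The geometric fact you flag as the main obstacle does go through cleanly, since inside the rectangular region north-east of a left-to-right minimum the down-core rectangle condition is automatic, so non-adjacent active cells there are always sum-ordered.
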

\begin{theorem}\label{thm:inf_downcore}
    Let $P$ be a set of sum-indecomposable permutations. Then
    $\SE$ is a bijection between
    $\Avn(r_d, c_d, 1\oplus P)$ and
    $\WI(\D(B_n), \Avp(r_d, c_d, P))$.
\end{theorem}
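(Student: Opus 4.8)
The plan is to mirror exactly the structure of the proof of Theorem~\ref{thm:inf_upcore}, replacing the up-core by the down-core and the patterns $r_u, c_u$ by $r_d, c_d$ throughout. The two ingredients we need are already available in dual form: Lemma~\ref{lem:up_down_edge_const}(\ref{sublem:down-edges}) tells us that a permutation in $\Av(r_d, c_d)$ has active cells forming an independent set of $\D(B_n)$, and Lemma~\ref{lem:dperm_subset_av} provides the inclusion $\dperm(\WI(\D(B_n), \Avp(r_d, c_d, P)))\subseteq \Avn(r_d, c_d, 1\oplus P)$, which is the down-core analogue of Equation~\eqref{eq:uperm_subset_av}. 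These two facts play the roles that Lemmas~\ref{lem:up_down_edge_const} and~\ref{lem:uperm_subset_av} played in the up-core theorem.

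First I would establish the forward inclusion. Let $\sigma\in\Avn(r_d, c_d, 1\oplus P)$. By Lemma~\ref{lem:up_down_edge_const}(\ref{sublem:down-edges}) its active cells form an independent set of $\D(B_n)$, and since $\sigma$ avoids $r_d$ and $c_d$, each cell's subpermutation lies in $\Av(r_d, c_d)$; because $\sigma$ avoids $1\oplus P$, no cell can contain a copy of any $\pi\in P$ (a single-cell occurrence of $\pi$ together with an earlier left-to-right minimum would produce $1\oplus\pi$), so the weights lie in $\Avp(r_d, c_d, P)$. This gives
\[\SE(\Avn(r_d, c_d, 1\oplus P))\subseteq \WI(\D(B_n), \Avp(r_d, c_d, P)).\]
For the reverse inclusion I would apply $\SE$ to both sides of the containment in Lemma~\ref{lem:dperm_subset_av} and invoke Lemma~\ref{lem:se-inverses}, which gives $\SE\circ\dperm=\mathrm{id}$ on staircase encodings, yielding
\[\WI(\D(B_n), \Avp(r_d, c_d, P))\subseteq \SE(\Avn(r_d, c_d, 1\oplus P)).\]
Combining the two inclusions establishes equality of the images.

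Finally, to upgrade the equality of images into a bijection I would note that permutations avoiding $r_d$ and $c_d$ have increasing rows and columns, by Lemma~\ref{lem:row_col_interleaving}(\ref{sublem:row_inc}) and~(\ref{sublem:col_inc}); hence by Definition~\ref{def:uperm-dperm} the map $\SE$ is injective when restricted to $\Avn(r_d, c_d, 1\oplus P)$, with inverse $\dperm$. Injectivity together with the equality of images gives that $\SE$ is a bijection onto $\WI(\D(B_n), \Avp(r_d, c_d, P))$, as required.

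Since the heavy lifting is delegated to Lemma~\ref{lem:dperm_subset_av}, the main obstacle in this theorem is not present here but rather in proving that lemma: the delicate Shading Lemma argument that an occurrence of $r_d$ or $c_d$ forces a forbidden mesh pattern, and hence (using sum-indecomposability of $P$ and the fact that $\pi$ occurs in a single cell when rows and columns are increasing) a contradiction. Assuming Lemma~\ref{lem:dperm_subset_av}, the proof of the theorem itself is a routine dualization of Theorem~\ref{thm:inf_upcore} and presents no real difficulty.
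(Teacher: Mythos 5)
Your proposal is correct and is exactly the proof the paper intends: the paper explicitly leaves Theorem~\ref{thm:inf_downcore} to the reader as a dualization of Theorem~\ref{thm:inf_upcore}, and your argument (forward inclusion via Lemma~\ref{lem:up_down_edge_const}, reverse inclusion by applying $\SE$ to Lemma~\ref{lem:dperm_subset_av} together with Lemma~\ref{lem:se-inverses}, injectivity from increasing rows and columns via Lemma~\ref{lem:row_col_interleaving}) mirrors that proof step for step. You are also right that the substantive content lives in Lemma~\ref{lem:dperm_subset_av}, which both you and the paper take as given here.
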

The proofs are left to the reader as they are similar to the proofs of
Lemma~\ref{lem:uperm_subset_av} and Theorem~\ref{thm:inf_upcore}.
Corollary~\ref{cor:gf_downcore} follows naturally from
Theorem~\ref{thm:inf_downcore}.

\begin{corollary}\label{cor:gf_downcore}
    Let $P$ be a set of sum-indecomposable permutations and $A(x)$ be the
    generating  function of  $\Av(r_d,  c_d,  P)$.
    Then $\Av(r_d, c_d, 1\oplus P)$ is enumerated by $\gfU(x,A(x)-1)$.
\end{corollary}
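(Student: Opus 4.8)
The plan is to mirror the proof of Corollary~\ref{cor:gf_upcore}, replacing the up-core by the down-core throughout; the structural heavy lifting has already been done in Theorem~\ref{thm:inf_downcore}, so what remains is a generating-function bookkeeping argument. First I would invoke Theorem~\ref{thm:inf_downcore} to obtain, for each $n\geq 0$, a bijection $\SE$ between $\Avn(r_d, c_d, 1\oplus P)$ and $\WI(\D(B_n), \Avp(r_d, c_d, P))$. Summing over $n$, and recalling that $B_0$ encodes the empty permutation, this gives a bijection
\[
    \Av(r_d, c_d, 1\oplus P) \;\longleftrightarrow\; \bigsqcup_{n\geq 0}\WI(\D(B_n), \Avp(r_d, c_d, P)).
\]

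Next I would track how size is distributed across this bijection. An element on the right-hand side consists of an independent set $C$ of $\D(B_n)$ together with a nonempty weight drawn from $\Avp(r_d, c_d, P)$ for each of the $|C|$ active cells. Under this correspondence, the $n$ left-to-right minima sitting on the diagonal of $B_n$ contribute $n$ to the size of the permutation, while the points filling each active cell contribute exactly the size of that cell's weight. Thus the total size is $n$ plus the combined sizes of the weights. Writing $A(x)$ for the generating function of $\Av(r_d, c_d, P)$, the generating function of the nonempty class $\Avp(r_d, c_d, P)$ supplying the weights is $A(x)-1$.

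The final step is to assemble the generating function. By Theorems~2.4 and~3.3 of \cite{bean2015pattern} cited above, the number of independent sets of size $k$ in $\D(B_n)$ equals the coefficient of $x^n y^k$ in $\gfU(x,y)$, the same bivariate series that governs $\UU(B_n)$. Marking each diagonal cell by $x$ and each active cell by $y$, and then substituting $y\mapsto A(x)-1$ to weight the $k$ active cells, produces
\[
    \gfU\bigl(x, A(x)-1\bigr)
\]
as the generating function of $\Av(r_d, c_d, 1\oplus P)$. The only point requiring care is this substitution: one must confirm that the $k$ active cells are weighted independently, so that the $y^k$ term becomes $(A(x)-1)^k$, which is immediate from the product structure of $\WI(\D(B_n), \Avp(r_d, c_d, P))$. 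Since the down-core shares its independent-set enumeration with the up-core, no obstacle arises beyond those already resolved in Corollary~\ref{cor:gf_upcore}.
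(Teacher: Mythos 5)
Your proposal is correct and follows exactly the route the paper intends: the paper gives no separate proof of Corollary~\ref{cor:gf_downcore}, stating only that it ``follows naturally'' from Theorem~\ref{thm:inf_downcore}, and its proof of Corollary~\ref{cor:gf_upcore} (which you mirror) is the same bookkeeping argument—bijection from the theorem, size equals number of left-to-right minima plus sizes of the weights, and substitution of $y \mapsto A(x)-1$ into $\gfU(x,y)$, which counts independent sets of $\D(B_n)$ as well as $\UU(B_n)$.
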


As a consequence of the previous corollary, $A(x)$, the generating function of
$\Av(2413, 3142)$, first enumerated by \cite{MR1754331},
also satisfies Equation~\eqref{eq:schroeder_rec} and is given by
Equation~\eqref{eq:schroeder_gf}. This result can enumerate 8 classes with
bases consisting of size four patterns and many more if we consider
longer patterns.

It is worth noting that any subclass of $\Av(2413, 3142)$ (as well as the class
itself) contains finitely
many simple permutations and can be enumerated using a more
general method called the substitution decomposition, described in
\cite{substitutiondecomposition}. \cite{autosubdecomp} extended the method to
allow for random sampling. We outline briefly in Section~\ref{sec:conclusion}
how the the structural description introduced in this paper
can be used to randomly sample in permutation classes, including many with
infinitely many simple permutations.

\section{Inflating the updown-core}\label{sec:UDRC-core}

In the previous section, we enumerated $\Av(2314, 3124)$ and $\Av(2413, 3142)$
and many of their subclasses. However, the intersection of the two classes,
namely the subclass $\Av(2314, 3124, 2413, 3142)$, cannot
be enumerated using the theorems so far.
Together those patterns put stricter constraints on the staircase encoding that
we have not encountered yet.
In this section, we combine different
graphs to represent these constraints and, in particular, give
the enumeration of $\Av(2314, 3124, 2413, 3142)$ and many of its subclasses.
Again, this class and any subclasses could be enumerated using the substitution
decomposition. However, the techniques used in this section are an important
stepping stone for the upcoming sections.

To represent the new constraint,
we introduce the \emph{column-edges} that connect cells in the same
column of a grid and the \emph{row-edges} that connect cells in the same
row. More formally:
\begin{definition}\label{def:col_row_edges}
    \item
    \begin{itemize}
        \item The \emph{column-core} graph $\C(B_n)$ is the graph whose
              vertices are the cells of $B_n$
              and where there is an edge between cells
              $(i,j)$ and $(k,\ell)$ if $i\neq k$ and $j=\ell$.
        \item The \emph{row-core} graph $\R(B_n)$ is the graph  whose
              vertices are the cells of $B_n$ and
              where there is an edge between cells
              $(i,j)$ and $(k,\ell)$ if $i= k$ and $j\neq\ell$.
    \end{itemize}
\end{definition}

We combine the edges of the four graphs $\UU(B_n)$, $\D(B_n)$, $\C(B_n)$ and
$\R(B_n)$ in the natural way.
For example, the graph $\UDC(B_n)$ has the cells of $B_n$ as vertices and
the edges of $\UU(B_n)$, $\D(B_n)$ and $\C(B_n)$. Figure~\ref{fig:mixing-core}
shows $\UDC(B_4)$.

\begin{figure}[htpb]
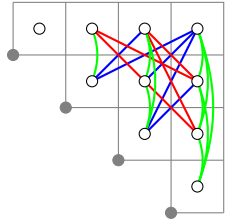

    \centering
    \includestandalone{figures/udc4}
    \caption{The graph $\UDC(B_4)$.
        The up-edges are blue, the down-edges are red and the column-edges are
        green.}%
    \label{fig:mixing-core}
\end{figure}

As we did for the up-core and the down-core with
Lemma~\ref{lem:up_down_edge_const}, we describe sufficient conditions for the
staircase encoding of a permutation to respect the constraints enforced by the
column-core and the row-core.

\begin{lemma}\label{lem:col_row_edges_const}
    Let $\sigma$ be a permutation with $n$ left-to-right minima and $C$ be the
    set of active cells of the staircase encoding of $\sigma$.
    Then $C$ is an independent set of
    \begin{enumerate}
        \item $\R(B_n)$ if $\sigma\in \Av(r_u, r_d)$,%
              \label{sublem:row-edges}
        \item $\C(B_n)$ if $\sigma\in \Av(c_u, c_d)$.%
              \label{sublem:col-edges}
    \end{enumerate}
\end{lemma}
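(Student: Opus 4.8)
The plan is to prove both statements by contraposition, using the fact that an independent set of $\R(B_n)$ (resp.\ $\C(B_n)$) is precisely a set of active cells no two of which lie in a common row (resp.\ column), by Definition~\ref{def:col_row_edges}. I would prove~\ref{sublem:row-edges}.\ in full and then obtain~\ref{sublem:col-edges}.\ by the symmetric argument, interchanging the roles of rows and columns and of the pattern pairs $r_u,r_d$ and $c_u,c_d$. In each case the work reduces to Lemma~\ref{lem:row_col_interleaving}, which already controls how the avoidance of a single pattern forces a row or column to be monotone.

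For~\ref{sublem:row-edges}., suppose $C$ is \emph{not} an independent set of $\R(B_n)$, so that two active cells $(i,j)$ and $(i,k)$ with $j<k$ lie in the same row $i$. Since the $j$-th column lies entirely to the left of the $k$-th column, any point $p$ of the cell $(i,j)$ lies to the left of any point $q$ of the cell $(i,k)$, and such $p,q$ exist because both cells are active. As $p$ and $q$ are distinct entries of $\sigma$, their values are distinct, so exactly one of two cases occurs. If $p$ is lower than $q$, then row $i$ is not decreasing, and the contrapositive of part~\ref{sublem:row_dec}.\ of Lemma~\ref{lem:row_col_interleaving} shows that $\sigma$ contains $r_u$. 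If instead $p$ is higher than $q$, then row $i$ is not increasing, and the contrapositive of part~\ref{sublem:row_inc}.\ of the same lemma shows that $\sigma$ contains $r_d$. Either way $\sigma\notin\Av(r_u,r_d)$, which is the required contrapositive.

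For~\ref{sublem:col-edges}.\ I would run the mirror-image argument: two active cells $(i,j)$ and $(k,j)$ with $i>k$ in a common column $j$, with points $p$ in $(i,j)$ and $q$ in $(k,j)$. Now $p$ and $q$ have distinct indices, so either $p$ lies to the left of $q$ or to its right. Invoking the contrapositives of parts~\ref{sublem:col_dec}.\ and~\ref{sublem:col_inc}.\ of Lemma~\ref{lem:row_col_interleaving}, the first case produces an occurrence of $c_u$ and the second an occurrence of $c_d$, so $\sigma\notin\Av(c_u,c_d)$.

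The step demanding the most care is matching each case of the point comparison to the correct forbidden pattern. For rows the relevant comparison is vertical (whether the left point is lower or higher than the right point), whereas the increasing/decreasing dichotomy for columns in Lemma~\ref{lem:row_col_interleaving} is phrased in terms of the horizontal interleaving of the two cells, so the case split there is on whether the lower cell's point lies to the left or to the right of the upper cell's point. Keeping these two orientations straight, and verifying that the two cases are genuinely exhaustive because $p$ and $q$ are distinct, is the only real obstacle; everything else is a direct appeal to Lemma~\ref{lem:row_col_interleaving}.
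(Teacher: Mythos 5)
Your proof is correct and is essentially the paper's argument: the paper also reduces everything to Lemma~\ref{lem:row_col_interleaving}, observing directly that avoidance of $r_u$ and $r_d$ makes every row simultaneously decreasing and increasing, hence containing at most one active cell, whereas you phrase the same one-step reduction contrapositively. The extra care you take matching the vertical comparison for rows and the horizontal comparison for columns is sound and consistent with the paper's definitions.
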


\begin{proof}
    By Lemma~\ref{lem:row_col_interleaving} if $\sigma\in \Av(r_u, r_d)$,
    the rows of $\sigma$ are both increasing and decreasing. Therefore,
    there is at most one active cell in each row,
    and so the active cells correspond to an independent set in $\R(B_n)$.
    The proof of~\ref{sublem:col-edges}.\ is similar.
\end{proof}

\begin{theorem}
    Let $P$ be a set of permutations. Then $\SE$ is a bijection between
    $\Avn(r_d, c_d, r_u, c_u, 1\oplus P)$ and
    $\WI(\UDRC(B_n),\Avp(r_d, c_d, r_u, c_u, P))$.
\end{theorem}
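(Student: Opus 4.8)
The plan is to mirror the proof of Theorem~\ref{thm:inf_upcore}, establishing the two inclusions separately and then invoking injectivity. The key new ingredient is that now all four patterns $r_u, c_u, r_d, c_d$ are avoided simultaneously, so by Lemmas~\ref{lem:up_down_edge_const} and~\ref{lem:col_row_edges_const} the active cells must form an independent set of the full graph $\UDRC(B_n)$ (the union of all four edge sets), and by Lemma~\ref{lem:row_col_interleaving} the rows and columns are both increasing and decreasing. The crucial simplification relative to the earlier theorems is that there is now \emph{no condition on $P$}: because every row and every column has at most one active cell, I expect each weight to occupy a single cell and interact with no other weight, which is what removes the need for sum- or skew-indecomposability.

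Let me sketch the two directions.

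\begin{proof}
First I would prove the forward inclusion. Let $\sigma \in \Avn(r_d, c_d, r_u, c_u, 1\oplus P)$. By Lemma~\ref{lem:up_down_edge_const}, the set $C$ of active cells is an independent set of both $\UU(B_n)$ and $\D(B_n)$, and by Lemma~\ref{lem:col_row_edges_const} it is also an independent set of $\R(B_n)$ and $\C(B_n)$. Hence $C$ is an independent set of $\UDRC(B_n)$. Each cell avoids $r_u, c_u, r_d, c_d$ since $\sigma$ does. It remains to check that each weight avoids every $\pi \in P$: if some cell contained $\pi$, then since by Lemma~\ref{lem:col_row_edges_const} every row and column has at most one active cell, one can find a left-to-right minimum below and to the left of that cell, which together with the occurrence of $\pi$ yields $1\oplus\pi$, contradicting $\sigma \in \Av(1\oplus P)$. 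Therefore
\[
\SE(\Avn(r_d, c_d, r_u, c_u, 1\oplus P)) \subseteq \WI(\UDRC(B_n), \Avp(r_d, c_d, r_u, c_u, P)).
\]

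For the reverse inclusion, I would show the analogue of Lemma~\ref{lem:uperm_subset_av}: that taking any weighted independent set of $\UDRC(B_n)$ with weights in $\Avp(r_d, c_d, r_u, c_u, P)$ and reassembling it (via $\uperm$, say, which is forced since rows and columns are simultaneously increasing and decreasing and so monotone cells can interleave in only one way) produces a permutation avoiding $r_u, c_u, r_d, c_d$ and $1\oplus P$. The mesh-pattern argument of Lemma~\ref{lem:uperm_subset_av} shows $r_u, c_u$ are avoided and symmetrically $r_d, c_d$; and an occurrence of $1\oplus\pi$ for $\pi \in P$ would, because the independent-set and monotone-row-and-column constraints force $\pi$ into a single cell, contradict the weights lying in $\Av(P)$ --- here I do \emph{not} need indecomposability of $\pi$, since the $\UDRC$ constraints already confine any copy of $\pi$ to one cell. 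Applying $\SE$ to this inclusion and using Lemma~\ref{lem:se-inverses} gives the opposite containment, so the two sets are equal. Finally, since $\Av(r_u, c_u, r_d, c_d)$ has rows and columns that are both increasing and decreasing, the staircase encoding is injective on this set, and therefore $\SE$ is the desired bijection.
\end{proof}

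The main obstacle I anticipate is the reverse inclusion, specifically verifying cleanly that under the combined $\UDRC$ constraints any would-be occurrence of $1\oplus\pi$ is trapped inside one cell without appealing to any decomposability of $\pi$; this is precisely where the four patterns acting together do the work that previously required hypotheses on $P$, and it is the point deserving the most care.
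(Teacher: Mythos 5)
Your proposal is correct and follows essentially the same route as the paper: the forward inclusion from Lemmas~\ref{lem:up_down_edge_const} and~\ref{lem:col_row_edges_const}, the reverse inclusion by reducing to Lemma~\ref{lem:uperm_subset_av} (and its $\dperm$ counterpart, using that $\uperm$ and $\dperm$ coincide when each row and column has at most one active cell) plus the direct observation that the $\UDRC$ constraints trap any occurrence of $1\oplus\pi$ in a single cell, and finally injectivity of $\SE$ from monotone rows and columns. The step you flagged as delicate is handled in the paper exactly as you anticipated: any two cells in the rectangular region north-east of a left-to-right minimum are adjacent in $\UDRC(B_n)$, so at most one cell there is active and no indecomposability hypothesis on $P$ is needed.
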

\begin{proof}
    By Lemmas~\ref{lem:up_down_edge_const} and~\ref{lem:col_row_edges_const}
    we have that
    \begin{equation*}
        \SE(\Av(r_d,c_d, r_u, c_u, 1\oplus P)) \subseteq
        \WI(\UDRC(B_n), \Avp(r_d, c_d, r_u, c_u, P)).
    \end{equation*}

    Let $\sigma$ be a permutation in $\uperm(\WI(\UDRC(B_n),\Avp(r_d, c_d, r_u, c_u, P)))$.
    As any independent set of $\UDRC(B_n)$ is an independent set of $\UU(B_n)$
    and
    \[\Av(r_d, c_d, r_u, c_u, P) \subseteq \Av(r_u, c_u)\]
    we have that
    \begin{equation*}
        \uperm(\WI(\UDRC(B_n),\Avp(r_d, c_d, r_u, c_u,P)))
        \subseteq
        \uperm(\WI(\UU(B_n),\Avp(r_u, c_u)))
    \end{equation*}
    Therefore, by Lemma~\ref{lem:uperm_subset_av} it follows that
    $\sigma \in \Av(r_u, c_u)$. By observing that $\uperm$
    and $\dperm$ are equivalent when building from an independent set of
    $\RC(B_n)$, it follows
    from a symmetric argument and Lemma~\ref{lem:dperm_subset_av} that
    $\sigma \in \Av(r_d, c_d)$.

    Assume that
    $\sigma$ contains a pattern $1 \oplus \pi$, where $\pi \in P$. Without loss
    of generality, we can assume that the $1$ in the occurrence is a
    left-to-right minimum in $\sigma$. Then $\pi$ occurs in the rectangular
    region of cells north-east of the minimum. By the constraints of
    $\UDRC(B_n)$, there can be at most one active cell in this region, and so
    $\pi$ is contained in this cell. This contradicts the fact that active
    cells are filled with permutations avoiding $\pi$. Therefore, $\sigma$
    avoids $1 \oplus P$, and moreover
    $\sigma \in \Avn(r_d,c_d, r_u, c_u, 1\oplus P)$.
    We get
    \[ \uperm(\WI(\UDRC(B_n),\Avp(r_d, c_d, r_u, c_u,P))) \subseteq
        \Avn(r_d,c_d,r_u,c_u,1\oplus P)\]
    or equivalently by applying $\SE$ on both sides
    \[ \WI(\UDRC(B_n),\Avp(r_d, c_d, r_u, c_u,P)) \subseteq
        \SE(\Avn(r_d,c_d,r_u,c_u,1\oplus P)).\]

    Since permutations avoiding $r_u$ and $c_u$ have decreasing rows and columns,
    the map $\SE$ is injective when restricted to $\Avn(r_u,c_u,r_d,c_d,1\oplus P)$.
    Therefore, $\SE$ is a bijection between
    $\Avn(r_u,c_u,r_d,c_d,1\oplus P)$ and the image of that set which is
    $\WI(\UDRC(B_n), \Avp(r_u,c_u,r_d,c_d,P))$.
\end{proof}

Since the graph $\UDRC(B_n)$ is the same graph as the updown-core of $B_n$ defined
in \cite{bean2015pattern}, Lemma~4.13
of the same paper gives
that the number of independent sets of size $k$ in $\UDRC(B_n)$
is given by the coefficient of $x^n y^k$ in the generating function
\[\gfUDRC(x,y) = \frac{1-x}{x^2-xy-2x+1}.\]

As in Section~\ref{sec:inf-up-down}, we get an enumeration result for the
classes
\sloppy ${\Av(r_d, c_d, r_u, c_u, 1\oplus P)}$.
\begin{corollary}\label{cor:gf_rdcdrucu}
    Let $P$ be a set of permutations and $A(x)$ be the
    generating  function of  $\Av(r_d,  c_d,  r_u, c_u, 1\oplus P)$.
    Then $A(x)$ satisfies
    \[A(x) = \gfUDRC(x, B(x)-1)\]
    where $B(x)$ is the generating function of $\Av(r_d, c_d, r_u, c_u, P)$.
\end{corollary}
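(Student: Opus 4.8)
The plan is to mimic the proof of Corollary~\ref{cor:gf_upcore}, simply replacing the up-core bijection with the bijection established in the preceding theorem. First I would collect the bijections over all values of $n$: the preceding theorem states that for each $n$ the map $\SE$ identifies $\Avn(r_d, c_d, r_u, c_u, 1\oplus P)$ with $\WI(\UDRC(B_n), \Avp(r_d, c_d, r_u, c_u, P))$, so taking the disjoint union over $n\geq 0$ yields a size-preserving bijection between $\Av(r_d, c_d, r_u, c_u, 1\oplus P)$ and $\bigsqcup_{n\geq 0}\WI(\UDRC(B_n), \Avp(r_d, c_d, r_u, c_u, P))$.

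Next I would track sizes. Under $\SE^{-1}$, a weighted independent set of $\UDRC(B_n)$ is turned into a permutation whose size equals the number of left-to-right minima, namely $n$, plus the sum of the sizes of the permutations weighting the active cells. Hence I want a bivariate series in which $x$ marks $n$ and a second variable marks the occupied cells before weighting. That enumeration is already available: as recalled above from Lemma~4.13 of \cite{bean2015pattern}, the number of independent sets of size $k$ in $\UDRC(B_n)$ is the coefficient of $x^n y^k$ in $\gfUDRC(x,y)$. Each chosen vertex is then to be weighted by a non-empty permutation of $\Av(r_d, c_d, r_u, c_u, P)$, whose generating function by size is $B(x)-1$.

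The key step is the substitution $y \mapsto B(x)-1$. Replacing the marker $y^k$ of an independent set of size $k$ by $(B(x)-1)^k$ assigns to each of the $k$ chosen cells an independent non-empty weight and records its size in the same variable $x$ that already counts the left-to-right minima; since the total size is additive over the minima and the cell-weights, this substitution produces exactly $\sum_{N} |\Av_N(r_d, c_d, r_u, c_u, 1\oplus P)|\, x^N$. Therefore $A(x) = \gfUDRC(x, B(x)-1)$, as claimed.

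I do not anticipate a genuine obstacle, since this is a direct corollary of the preceding theorem together with the known closed form for $\gfUDRC$. The only point requiring care is the size bookkeeping in the substitution, namely verifying that the variable $x$ carrying the weight sizes composes correctly with the $x^n$ counting the grid, so that the sizes add rather than interfere. This is the same mechanism as in Corollary~\ref{cor:gf_upcore}, now applied with the graph $\UDRC(B_n)$ in place of the up-core.
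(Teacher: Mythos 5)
Your proposal is correct and matches the paper's approach exactly: the paper gives no separate proof for this corollary, merely noting it follows ``as in Section~\ref{sec:inf-up-down},'' i.e., by the same argument as Corollary~\ref{cor:gf_upcore} --- apply the bijection of the preceding theorem, observe that permutation size equals the number of left-to-right minima plus the sizes of the cell weights, and substitute $B(x)-1$ for $y$ in $\gfUDRC(x,y)$. Your write-up simply makes this implicit argument explicit, with the size bookkeeping handled just as the paper intends.
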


Solving the equation $A(x) = \gfUDRC(x, A(x)-1)$ gives
\[ A(x) =
    \frac{x^{2}-x-\sqrt{x^{4}-2x^{3}+7x^{2}-6x+1}+1}{2x}\]
which is the generating function for $\Av(2413, 3142, 2314, 3124)$.
This class was first enumerated by \cite{MR1948771}
and the sequence appears on OEIS as \oeis{A078482}.

\section{New cores}\label{sec:new-core}

To this point we have considered permutation classes that can be described by
filling the independent sets of the graphs $\UU(B_n)$, $\D(B_n)$ and $\UDRC(B_n)$,
which were first used by~\cite{bean2015pattern} to
enumerate permutation classes avoiding size $3$ patterns. In this section, we
begin to consider new graphs that were not motivated by permutation classes
avoiding size $3$ patterns.

We first consider $\UDC(B_n)$.
The active cells in the staircase encoding of a permutation $\sigma$
avoiding $c_u$ and $c_d$ are an independent set of $\C(B_n)$.
By Lemma~\ref{lem:up_down_edge_const}, they are also an independent set of
$\UU(B_n)$ and $\D(B_n)$. In order to make the filling of independent sets
unique we need that the rows are either increasing or decreasing,
\textit{i.e.}, $\sigma$ avoids $r_u$ or $r_d$.
In this section, we consider additionally avoiding $r_u$, and delay the
discussion of avoiding $r_d$ to Section~\ref{sec:gen_inflation}.

\begin{theorem}\label{thm:inf_cu_cd_ru}
    Let $P$ be a set of skew-indecomposable permutations. Then $\SE$ is a
    bijection between
    $\Avn(r_u,c_u,c_d, 1\oplus P)$ and
    $\WI(\UDC(B_n),\Avp(r_u, c_u, c_d, P))$.
\end{theorem}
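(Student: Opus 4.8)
The plan is to follow the same two-inclusion template that established Theorem~\ref{thm:inf_upcore}, since the present statement adds the single pattern $c_d$ to the basis $\{r_u,c_u\}$ and correspondingly adds the column-edges $\C(B_n)$ to the up-core to form $\UDC(B_n)$. First I would prove the easy inclusion
\[
    \SE(\Avn(r_u,c_u,c_d,1\oplus P)) \subseteq \WI(\UDC(B_n),\Avp(r_u,c_u,c_d,P)).
\]
For a permutation $\sigma$ in the left-hand class, Lemma~\ref{lem:up_down_edge_const}\ref{sublem:up-edges} shows the active cells are independent in $\UU(B_n)$. To get independence in $\C(B_n)$ as well, I would invoke Lemma~\ref{lem:col_row_edges_const}\ref{sublem:col-edges}: since $\sigma$ avoids both $c_u$ and $c_d$, each column has at most one active cell, so no column-edge joins two active cells. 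Together these give independence in $\UDC(B_n)$. That the weight in each active cell avoids $\{r_u,c_u,c_d,P\}$ is immediate, since any occurrence inside a cell is an occurrence in $\sigma$.

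For the reverse inclusion I would mimic the structure of Lemma~\ref{lem:uperm_subset_av}, proving
\[
    \uperm(\WI(\UDC(B_n),\Avp(r_u,c_u,c_d,P))) \subseteq \Avn(r_u,c_u,c_d,1\oplus P),
\]
and then apply $\SE$ to both sides, using Lemma~\ref{lem:se-inverses} to cancel $\SE\circ\uperm$. The containment in $\Avn(r_u,c_u,1\oplus P)$ is inherited directly from Lemma~\ref{lem:uperm_subset_av}, because any independent set of $\UDC(B_n)$ is also an independent set of $\UU(B_n)$ and $\Avp(r_u,c_u,c_d,P)\subseteq\Avp(r_u,c_u,P)$, so nothing new is needed there. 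The genuinely new obligation is to show that $\sigma=\uperm(E)$ also avoids $c_d$. Here I would argue that an occurrence of $c_d=3142$ in $\sigma$ forces, via the decreasing-column structure guaranteed by $\Av(c_u)$ (Lemma~\ref{lem:row_col_interleaving}\ref{sublem:col_dec}) together with the column-independence of the active set, the pattern $c_d$ either to lie inside a single active cell (contradicting that weights avoid $c_d$) or to produce two active cells that share a column, contradicting the $\C(B_n)$-independence of the independent set used to build $\sigma$.

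The main obstacle I anticipate is making this $c_d$-avoidance argument fully rigorous, since $c_d=3142$ is skew-indecomposable but not confined to a single row or column a priori; as in the proof of Lemma~\ref{lem:uperm_subset_av}, a clean treatment likely passes through a mesh-pattern refinement (via the Shading Lemma) so that an occurrence of $c_d$ can be slid until it witnesses either a $1\oplus\pi$ pattern forcing a contradiction with $\uperm(\WI(\UU(B_n),\Avp(\pi)))\subseteq\Avn(1\oplus\pi)$ from Equation~\eqref{eq:first_inc}, or a genuine column-edge between two active cells. The skew-indecomposability hypothesis on $P$ enters exactly as in Lemma~\ref{lem:uperm_subset_av}: it guarantees that any $1\oplus\pi$ with $\pi\in P$ cannot spread across several cells in the decreasing-row, decreasing-column, column-independent configuration, so it must sit inside one active cell and contradict the choice of weights. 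Once both inclusions are in hand, injectivity of $\SE$ on $\Avn(r_u,c_u,c_d,1\oplus P)$ follows from decreasing rows and columns exactly as in Theorem~\ref{thm:inf_upcore}, completing the bijection.
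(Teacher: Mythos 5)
Your overall template is indeed the paper's: forward inclusion from the structural lemmas, reverse inclusion via Lemma~\ref{lem:uperm_subset_av} followed by a Shading-Lemma argument for $c_d$, and injectivity from decreasing rows and columns. However, there is a genuine gap: throughout the proposal you treat $\UDC(B_n)$ as if it consisted only of the up-edges and the column-edges. The down-edges of $\D(B_n)$ are part of $\UDC(B_n)$, and they are needed in \emph{both} directions. In the forward inclusion, independence in $\UU(B_n)$ and $\C(B_n)$ does not ``together give independence in $\UDC(B_n)$'': you must also show the active cells are independent in $\D(B_n)$. This part is easily patched, since $\sigma$ avoids $c_d$, so part~(\ref{sublem:down-edges}) of Lemma~\ref{lem:up_down_edge_const} (which only requires $\sigma\in\Av(r_d)\cup\Av(c_d)$) supplies exactly that; but as written your claim is false.

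The same blind spot makes your $c_d$-avoidance argument in the reverse inclusion incomplete in a way that cannot be waved off. Following the paper: first apply Lemma~\ref{lem:uperm_subset_av} to the skew-indecomposable set $\{c_d\}\cup P$, so $\sigma$ avoids $r_u$, $c_u$, $1\oplus c_d$ and $1\oplus P$; then, if $c_d$ occurs, the Shading Lemma together with the avoidance of $r_u$ and $1\oplus c_d$ yields an occurrence of the mesh pattern of Figure~\ref{fig:rd_mp} in which the ``$3$'' and the ``$1$'' are left-to-right minima of $\sigma$. The ``$4$'' and the ``$2$'' of that occurrence then lie in two \emph{distinct} active cells in \emph{distinct} rows (their values straddle the minimum playing the ``$3$''), and these two cells either share a column --- your column-edge contradiction --- or sit northwest--southeast of one another, which is a down-edge of $\D(B_n)$. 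Your dichotomy ``inside a single active cell or two active cells sharing a column'' omits this third configuration, and it is precisely the one that only the $\D(B_n)$-independence can rule out; the paper's proof concludes by contradicting independence in $\D(B_n)$ \emph{and} $\C(B_n)$. The contradiction is available to you, since the weighted independent sets you start from are independent in all of $\UDC(B_n)$, but your argument never invokes the down-edges, so the $c_d$ step does not go through as stated.
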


\begin{proof}
    Let $\sigma \in \uperm(\WI(\UDC(B_n), \Avp(r_u,c_u,c_d,P)))$.
    As $\UDC(B_n)$ contains the edges of $\UU(B_n)$ we have
    \begin{equation*}
        \uperm(\WI(\UDC(B_n), \Avp(r_u,c_u,c_d,P)))
        \subseteq
        \uperm(\WI(\UU(B_n), \Avp(r_u,c_u,c_d,P)))
    \end{equation*}
    and so by Lemma~\ref{lem:uperm_subset_av}, $\sigma$
    avoids $r_u$, $c_u$, $1\oplus c_d$ and $1\oplus P$. Suppose that $\sigma$
    contains an occurrence of $c_d$, then by the Shading Lemma
    it also has an occurrence of the mesh pattern with the same underlying
    pattern and cells $(0, 2)$ and $(1, 0)$ shaded. Further, the avoidance of
    $r_u$ and $1 \oplus c_d$ imply that there is an occurrence with the cells
    $(0, 1)$ and $(0, 0)$ also shaded. Let
    $\sigma_{i_1}\sigma_{i_2}\sigma_{i_3}\sigma_{i_4}$
    be an occurrence of this
    mesh pattern, shown in Figure~\ref{fig:rd_mp}.
    \begin{figure}[htpb]
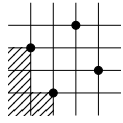

        \centering
        \includestandalone{figures/mp3}
        \caption{The mesh pattern that is contained in the permutation
            $\sigma$
            if it contains an occurrence of $c_d$.}%
        \label{fig:rd_mp}
    \end{figure}
    Both $\sigma_{i_1}$ and
    $\sigma_{i_2}$ are left-to-right minima of $\sigma$. Therefore,
    $\sigma_{i_3}$ and $\sigma_{i_4}$ are in two separate active cells,
    contradicting the fact that the active cells are an independent set of
    $\D(B_n)$ and $\C(B_n)$. Hence, $\sigma \in \Av(r_u, c_u, c_d, 1\oplus P)$
    and
    \[\uperm(\WI(\UDC(B_n), \Avp(r_u,c_u,c_d,P))) \subseteq
        \Avn(r_u,c_u,c_d,1\oplus P).\]
    By applying $\SE$ to both sides we get
    \[\WI(\UDC(B_n), \Avp(r_u,c_u,c_d,P)) \subseteq
        \SE(\Avn(r_u,c_u,c_d,1\oplus P)).\]

    \noindent
    By Lemmas~\ref{lem:up_down_edge_const} and~\ref{lem:col_row_edges_const},
    \[ \SE(\Avn(r_u,c_u,c_d,1\oplus P)) \subseteq
        \WI(\UDC(B_n), \Avp(r_u,c_u,c_d,P)).\]
    Moreover, by
    Lemma~\ref{lem:row_col_interleaving} the rows and columns are decreasing,
    therefore, restricted to $\Avn(r_u,c_u,c_d,1\oplus P)$, $\SE$ is injective
    and a bijection to its image which is
    $\WI(\UDC(B_n), \Av(r_u,c_u,c_d,P))$.
\end{proof}

In order to use the theorem above for enumerative purposes,
we need to find the generating function where the coefficient of $x^n y^k$ is
the number of independent sets of size $k$ in $\UDC(B_n)$.
We prove a slightly more general statement that
tracks the number of rows occupied by the independent set.
Although, not required for the permutation classes discussed in this section
it will be necessary for the results in Section~\ref{sec:gen_inflation}.

\begin{proposition}\label{prop:gf_UDR}
    The number of independent sets of size $k$ occupying $\ell$ rows
    in $\UDC(B_n)$
    is given by the coefficient of $x^n y^k z^\ell$ in the generating function
    \[\gfUDC(x,y,z) = \frac{1-x-xy}{x^2 y -xyz +x^2 - xy -2x + 1}.\]
\end{proposition}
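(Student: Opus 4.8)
The plan is to derive a functional equation for $\gfUDC(x,y,z)$ by peeling off the top row of the staircase grid. Write $\gfUDC(x,y,z)=\sum_{n,k,\ell}a_{n,k,\ell}\,x^ny^kz^\ell$, where $a_{n,k,\ell}$ is the number of independent sets of $\UDC(B_n)$ of size $k$ occupying $\ell$ rows, so that $x$ marks the number of rows of $B_n$, $y$ the number of active cells, and $z$ the number of occupied rows. The empty grid $B_0$ contributes the constant term $1$.

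The key structural step is to understand how the cells of the first row $(1,1),\ldots,(1,n)$ interact with the rest of the grid. First I would observe that the first row carries no $\UDC$-edge internally: two distinct cells $(1,j),(1,j')$ lie in the same row, so they are joined by no up-edge nor down-edge (each requires distinct first coordinates in a prescribed order) and by no column-edge (which requires equal second coordinates). Hence \emph{any} subset of the first row is independent. The crux of the argument, and the main obstacle, is the following compatibility lemma: a cell $(1,j)$ and a lower cell $(i,\ell)$ with $i\ge 2$ are non-adjacent in $\UDC(B_n)$ if and only if $i>j$. This is proved by testing the three edge types against the position of $(i,\ell)$ relative to column $j$: if $\ell<j$ there is an up-edge, if $\ell=j$ a column-edge, and if $\ell>j$ there is a down-edge exactly when the spanned rectangle lies in $B_n$, i.e.\ when its lowest-left corner $(i,j)$ satisfies $j\ge i$. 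Thus the pair survives precisely when $i>j$ (which forces $\ell\ge i>j$ automatically). Verifying the rectangle condition of the down-core here is the delicate point.

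With the lemma in hand the decomposition follows. If the first row is empty, the remaining rows form a copy of $B_{n-1}$, contributing $x\,\gfUDC(x,y,z)$. Otherwise, let column $m$ be the rightmost active cell of the first row. The lemma shows every lower active cell must lie in a row $i>m$; consequently rows $2,\ldots,m$ are empty and the active cells below form an independent set of the subgrid on rows $m{+}1,\ldots,n$, which is isomorphic to $B_{n-m}$ (the reindexing preserves all three edge types, including the rectangle condition for the down-core, since all relevant coordinates exceed $m$). The remaining active cells of the first row may form any subset of columns $1,\ldots,m-1$, contributing a factor $(1+y)^{m-1}$, while the mandatory cell in column $m$ contributes $y$ and the occupied first row contributes $z$; the row itself together with the $m-1$ forced-empty rows contributes $x^m$. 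Using $\sum_{m\ge1}x^m(1+y)^{m-1}=\frac{x}{1-x(1+y)}$, summing over $m\ge 1$ yields the functional equation
\[
  \gfUDC(x,y,z)=1+x\,\gfUDC(x,y,z)+\frac{xyz}{1-x(1+y)}\,\gfUDC(x,y,z).
\]

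Finally I would solve this linear equation for $\gfUDC$, obtaining
\[
  \gfUDC(x,y,z)=\frac{1-x(1+y)}{(1-x)\bigl(1-x(1+y)\bigr)-xyz},
\]
and expand the denominator, $(1-x)(1-x-xy)-xyz=x^2y-xyz+x^2-xy-2x+1$, to recover the claimed closed form. The only genuinely nontrivial ingredient is the compatibility lemma together with its reindexing counterpart for the recursive subgrid; the rest is routine bookkeeping of the three marks $x$, $y$, $z$.
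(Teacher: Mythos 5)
Your proposal is correct and follows essentially the same route as the paper: both peel off the top row, condition on the rightmost active cell (in column $m$, giving the factor $x^m y z(1+y)^{m-1}$ against a smaller copy $\UDC(B_{n-m})$), and solve the resulting linear functional equation $\gfUDC = 1 + x\gfUDC + \frac{xyz}{1-x(1+y)}\gfUDC$. The only difference is presentational: you verify the adjacency/compatibility claims (including the down-core rectangle condition) explicitly, where the paper argues them pictorially via the shaded region in its figure.
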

\begin{proof}
    An independent set in $\UDC(B_n)$ can contain an arbitrary number of
    vertices in the topmost row, \textit{i.e.}, vertices of the form $(1,j)$. The
    number of such vertices is called the \emph{degree}.
    If the degree is $0$, then the subgraph induced by the remaining vertices is
    isomorphic to a smaller core $\UDC(B_{n-1})$.
    If the degree is not $0$, let
    \[k=\max\{j:(1,j)\text{ is a vertex of the independent set}\},\]
    \textit{i.e.}, $k$ is the column of the rightmost vertex in the topmost row.
    The independent set cannot contain a vertex $(\ell,m)$ if $1<\ell\leq k$
    or if $\ell=1$ and $m\geq k$. This corresponds to the region shaded in gray
    in Figure~\ref{fig:cont_cu_cd}.

    \begin{figure}[htpb]
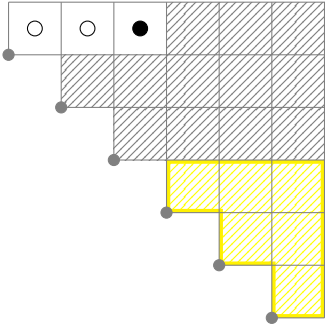

        \centering
        \includestandalone{figures/udc_shading}
        \caption{A staircase grid with an active cell marked by a black point.
            The shaded cells are the cells that cannot be added to make an
            independent set. Cells marked with a circle are disconnected from the
            graph induced by removing the shaded cells.}%
        \label{fig:cont_cu_cd}
    \end{figure}
    Moreover, the vertices $\{(1,j): 1\leq j < k\}$
    share no edges. We can, therefore, chose independently
    if they are in the independent set.

    The graph induced by the remaining vertices $(\ell, m)$
    with $\ell > k$ and $m > k$, form an instance of the graph
    $\UDC(B_{n - k})$. In Figure~\ref{fig:cont_cu_cd} this is the yellow
    region.
    Hence, $\gfUDC(x,y,z)$ satisfies
    \begin{align*}
        \gfUDC(x,y, z) & =
        1 + x\gfUDC(x,y,z) + xyz\gfUDC(x,y,z) + \cdots + x^i
        y{(y+1)}^{i-1}z\gfUDC(x,y,z) + \cdots                                    \\
                       & = 1 + x\gfUDC(x,y) + \frac{xyz\gfUDC(x,y,z)}{1-x(y+1)}. \\
    \end{align*}
    Solving this equation gives the closed form claimed in the proposition.
\end{proof}

As the proof of Theorem~\ref{thm:inf_cu_cd_ru} gives a unique encoding of
the permutation classes $\Av(r_u,c_u,c_d, 1\oplus P)$ we derive the following
corollary to give their enumeration.

\begin{corollary}\label{cor:gf_rucupi}
    Let $P$ be a set of skew-indecomposable permutations and
    $A(x)$ be the generating function of $\Av(r_u, c_u, c_d, 1\oplus P)$.
    Then $A(x)$ satisfies
    \[ A(x) = \gfUDC(x, B(x)-1, 1) \]
    where $B(x)$ is the generating function of $\Av(r_u, c_u, c_d, P)$.
\end{corollary}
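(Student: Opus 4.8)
The plan is to mirror the structure of Corollary~\ref{cor:gf_upcore}, using Theorem~\ref{thm:inf_cu_cd_ru} as the bijective backbone and Proposition~\ref{prop:gf_UDR} as the enumerative input. First I would invoke Theorem~\ref{thm:inf_cu_cd_ru} to record that $\SE$ gives, for each $n$, a bijection between $\Avn(r_u,c_u,c_d,1\oplus P)$ and $\WI(\UDC(B_n),\Avp(r_u,c_u,c_d,P))$. Taking the disjoint union over all $n\geq 0$ then puts $\Av(r_u,c_u,c_d,1\oplus P)$ in one-to-one correspondence with $\bigsqcup_{n\geq 0}\WI(\UDC(B_n),\Avp(r_u,c_u,c_d,P))$, exactly as in the proof of Corollary~\ref{cor:gf_upcore}.

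The second step is to track sizes under this correspondence. A weighted independent set in $\WI(\UDC(B_n),\Avp(r_u,c_u,c_d,P))$ is built from $n$ left-to-right minima together with a choice of independent set whose $k$ active cells are weighted by non-empty permutations from $\Av(r_u,c_u,c_d,P)$. The size of the resulting permutation is $n$ plus the sum of the sizes of the weights. Hence in the generating-function bookkeeping, each of the $k$ active cells contributes a factor of $B(x)-1$, where $B(x)$ is the generating function of $\Av(r_u,c_u,c_d,P)$ and the subtraction of $1$ reflects that weights are drawn from $\Avp$, i.e.\ are non-empty. By Proposition~\ref{prop:gf_UDR}, the number of independent sets of size $k$ in $\UDC(B_n)$ is the coefficient of $x^n y^k$ in $\gfUDC(x,y,1)$ (setting $z=1$ since we do not need to track the number of occupied rows here). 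Substituting $y=B(x)-1$ performs precisely the substitution of the cell-weight generating function into the $y$-variable, so that $A(x)=\gfUDC(x,B(x)-1,1)$.

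I expect the only subtle point to be justifying the substitution $y\mapsto B(x)-1$ as a legitimate composition of generating functions, namely that summing over all fillings of the $k$ active cells by independent members of $\Avp(r_u,c_u,c_d,P)$ produces the factor $(B(x)-1)^k$, and that this is compatible with the symbolic substitution into $\gfUDC(x,y,1)$. This is the same reasoning already used in Corollary~\ref{cor:gf_upcore}, where the substitution $\gfU(x,A(x)-1)$ is justified by the identical size-additivity argument; the proof here is structurally the same, only with $\gfU$ replaced by $\gfUDC(\cdot,\cdot,1)$. Since Theorem~\ref{thm:inf_cu_cd_ru} already supplies the bijection and Proposition~\ref{prop:gf_UDR} already supplies the independent-set generating function, no new obstacle arises and the corollary follows immediately.
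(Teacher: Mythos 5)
Your proposal is correct and follows essentially the same route as the paper: the paper derives this corollary directly from Theorem~\ref{thm:inf_cu_cd_ru} together with Proposition~\ref{prop:gf_UDR} (specialized at $z=1$), with the size-additivity and substitution argument borrowed from Corollary~\ref{cor:gf_upcore}, exactly as you describe.
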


\section{Generalizing the fillings}\label{sec:gen_inflation}

As Section~\ref{sec:new-core} considers the basis $\{r_u, c_u, c_d\}$,
one could hope that we can handle $\{r_d, c_d, c_u\}$ similarly.
Unfortunately, the proof of
Theorem~\ref{thm:inf_cu_cd_ru} relies heavily on the
fact that $c_d$ is skew-indecomposable. To repeat the argument for
$\{r_d, c_d, c_u\}$, one would need $c_u$ to
be sum-indecomposable which is not the case.
However, tracking an additional
statistic on the independent set allows us to enumerate these classes.
Even if the classes considered in the section could be enumerated using the
substitution decomposition, the tracking we are about to introduce will also be
used in sections~\ref{sec:ru_cd}, \ref{sec:rd_2134} and~\ref{sec:ru_2143}
for many classes that cannot be enumerated with the substitution decomposition.

If we consider the
staircase encoding of $\sigma\in\Avn(r_d, c_d, c_u)$, we have from
Lemmas~\ref{lem:up_down_edge_const} and~\ref{lem:col_row_edges_const}
that the set of active cells form an independent set
of $\UDC(B_n)$ and that the rows are increasing.
Hence, if one cell in a row contains an occurrence of $312$ and
is not the rightmost non-empty cell of the row,
an occurrence of $c_u=3124$ is created (see Figure~\ref{fig:splitting_cu}).
If an active cell contains $312$ but is the rightmost cell in the row then no
$c_u$ pattern is created since the cells above and to the right are empty.
Hence, the staircase encoding of a permutation avoiding the pattern $c_u$
avoids $312$ in its cells except the rightmost active cell of each row.

\begin{figure}[htpb]
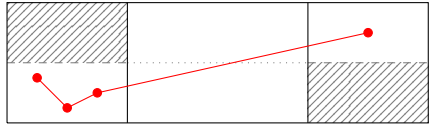

    \centering
    \includestandalone{figures/crossing_3124}
    \caption{An occurrence of $c_u=3124$ spanning across two cells.}%
    \label{fig:splitting_cu}
\end{figure}

In order to describe the set of staircase encodings of permutations in
$\Av(r_d,c_d,c_u)$ we enrich our definition of a weighted independent set
to a \emph{weighted labelled independent set}.
We first define a labelling function $\rl$ on an independent set.
This function maps a vertex $v$ of an independent set $I$ to a label in the
set $\{y,z\}$.
\begin{equation*}
    \rl(v, I) =
    \begin{cases}
        y & \text{if $v$ is not the rightmost cell of $I$ in its row}, \\
        z & \text{otherwise}.
    \end{cases}
\end{equation*}
\begin{definition}
    For a graph $G$ on the staircase grid,
    we define $\WI_{\rl}(G, Y, Z)$ as the set of weighted independent sets of $G$
    where the  weight of a vertex  $v$ in an  independent set $I$ is an element of $Y$ if
    $\rl(v,I)=y$ or an element of $Z$ if $\rl(v,I)=z$.
\end{definition}

We introduce an operation that removes the last value of a
permutation if this value is the maximum of the permutation.
\begin{definition}\label{def:strip}
    Let $\sigma$ be a permutation. We define the permutation $\bstrip{\sigma}$
    as
    \[ \bstrip{\sigma} = \begin{cases}
            \alpha & \textrm{if $\sigma=\alpha\oplus1$ for a permutation $\alpha$}, \\
            \sigma & \textrm{otherwise.}
        \end{cases} \]
\end{definition}

For example, $\bstrip{3124}=312$ and $\bstrip{1432}=1432$.
For a set of patterns $P$, let $\bstrip{P} = \{\bstrip{\pi}: \pi\in P\}$.

\begin{theorem}\label{thm:inf_rd_cd_cu}
    Let $P$ be a set of permutations such that $\bstrip{P}$ contains only
    sum-indecomposable permutations. Then $\SE$ is a bijection between
    $\Avn(r_d,c_d,c_u,1\oplus P)$ and
    $\WI_{\rl}(\UDC(B_n), \Avp(312,\bstrip{P}), \Avp(r_d,c_d,c_u,P))$.
\end{theorem}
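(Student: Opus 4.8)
The plan is to mirror the structure of the proof of Theorem~\ref{thm:inf_cu_cd_ru}, establishing the claimed set equality in two inclusions and then invoking injectivity of $\SE$. Throughout, note that avoiding $r_d$ and $c_d$ forces increasing rows and columns (Lemma~\ref{lem:row_col_interleaving}), so $\SE$ is injective on $\Avn(r_d,c_d,c_u,1\oplus P)$ via $\dperm$, and the crucial new feature is that the \emph{labelling} records, for each row, whether a cell is the rightmost active one: this is exactly what dictates whether its weight may contain $312$ (label $z$, weight from $\Avp(r_d,c_d,c_u,P)$) or must avoid $312$ (label $y$, weight from $\Avp(312,\bstrip{P})$).

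For the forward inclusion $\SE(\Avn(r_d,c_d,c_u,1\oplus P))\subseteq \WI_{\rl}(\UDC(B_n),\dots)$, I would take $\sigma$ in the class and argue that its active cells form an independent set of $\UDC(B_n)$ using Lemmas~\ref{lem:up_down_edge_const} and~\ref{lem:col_row_edges_const} (avoidance of $r_d,c_d$ gives $\D$-independence and, with $c_u$, $c_d$ gives $\C$-independence). The discussion preceding the theorem (Figure~\ref{fig:splitting_cu}) shows that avoiding $c_u$ forces every non-rightmost active cell in a row to avoid $312$; this is precisely the $y$-labelled condition. For the weights I must also verify the $1\oplus P$ avoidance descends correctly to the two weight sets. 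The subtlety is the $\bstrip{P}$ on $y$-labelled cells: if a $y$-cell were to contain some $\pi\in P$, then because the cell is not rightmost there is a point of $\sigma$ above and to the right of that cell in the same row, and together with a left-to-right minimum this upgrades an occurrence of $\pi$ to an occurrence of $1\oplus(\pi\oplus 1)$, hence $1\oplus\pi'$ where $\pi=\bstrip{\pi'}$; so $y$-cells must avoid $\bstrip{\pi}$ for each $\pi$, i.e.\ lie in $\Avp(312,\bstrip{P})$, while the rightmost ($z$) cells in each row only inherit the full $\Avp(r_d,c_d,c_u,P)$ restriction.

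For the reverse inclusion I would run the $\dperm$ version of the argument in Lemma~\ref{lem:dperm_subset_av}/Theorem~\ref{thm:inf_downcore}. Starting from $\sigma=\dperm(I)$ for a weighted labelled independent set $I$, the $\D(B_n)$-independence together with increasing rows and columns shows $\sigma\in\Av(r_d,c_d)$, and the $\C(B_n)$-independence handles the column constraint needed to forbid $c_u$ across columns; the remaining $c_u$ occurrences must span a single row, and there the labelling guarantees that only the rightmost cell may contain $312$, so no cross-cell $c_u$ is created. The key remaining point is avoidance of $1\oplus P$: assuming $\sigma$ contains $1\oplus\pi$ with $\pi\in P$, the $1$ may be taken to be a left-to-right minimum, so $\pi$ lives in the north-east rectangle, which by $\UDC$-independence meets at most one active cell per row/column — but here I must be careful, since unlike in Theorem~\ref{thm:inf_cu_cd_ru} the rectangle can meet several cells in a single \emph{row}. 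I expect this to be the main obstacle: I would argue that because $\bstrip{\pi}$ is sum-indecomposable, and all but the rightmost cell of a row avoid $312$ (hence any copy of $\bstrip{\pi}$ in a non-rightmost cell would force structure incompatible with sum-indecomposability unless $\pi=\bstrip{\pi}$), the pattern $\pi$ must in fact occur within a single active cell; this single cell is then either a $z$-cell avoiding $\pi$ or a $y$-cell whose avoidance of $\bstrip{\pi}$ combined with its non-rightmost position still precludes $\pi$, giving a contradiction in all cases.

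Once both inclusions are in hand, I conclude that $\SE(\Avn(r_d,c_d,c_u,1\oplus P))=\WI_{\rl}(\UDC(B_n),\Avp(312,\bstrip{P}),\Avp(r_d,c_d,c_u,P))$, and since $\SE$ restricted to this class is injective (increasing rows and columns, with $\dperm$ as inverse by Lemma~\ref{lem:se-inverses}), it is the desired bijection. The delicate bookkeeping is entirely in the interplay between the $\bstrip{}$ operation and the sum-indecomposability hypothesis on $\bstrip{P}$, which together make the decomposition of a hypothetical $1\oplus\pi$ occurrence land inside one cell.
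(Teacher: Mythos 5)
Your overall architecture is the paper's: prove the two inclusions and then invoke injectivity of $\SE$ (increasing rows and columns via Lemma~\ref{lem:row_col_interleaving}, with $\dperm$ as inverse). Your forward inclusion is fine, and your explicit argument that $y$-labelled cells must avoid $\bstrip{P}$ (a copy of $\bstrip{\pi}$ in a non-rightmost cell, a point of the rightmost active cell of that row to its north-east, and a left-to-right minimum assemble into $1\oplus\pi$) is actually spelled out more fully than in the paper, which cites only the $312$ discussion. Your treatment of $1\oplus P$ in the reverse direction, though garbled in places (the rectangle north-east of a minimum meets active cells in \emph{one row}, not ``at most one per row/column''; and non-rightmost cells simply avoid $\bstrip{P}$ by their weights, no ``incompatibility with sum-indecomposability'' is needed), recovers the paper's argument: by sum-indecomposability of $\bstrip{\pi}$ and increasing rows, either $\pi$ sits in one cell, or $\bstrip{\pi}$ sits in a non-rightmost cell with the final $1$ to its right; both are forbidden by the weight sets.

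The genuine gap is in your $c_u$-avoidance step: ``the remaining $c_u$ occurrences must span a single row'' is asserted, not proved, and the dichotomy (column-edge violation versus single-row occurrence) is not exhaustive. The single-row/labelling argument only rules out occurrences of $1\oplus c_u$, i.e.\ occurrences of $c_u$ lying strictly north-east of some left-to-right minimum. An arbitrary occurrence of $c_u$ in $\sigma=\dperm(I)$ need not be of that form: the ``$3$'' and ``$1$'' of the occurrence can themselves be left-to-right minima, with the ``$2$'' and ``$4$'' in active cells in different rows \emph{and different columns} --- that configuration violates an up-edge of $\UDC(B_n)$, not a column-edge, and falls outside both of your cases. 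The paper closes exactly this hole with a mesh-pattern reduction: having already established avoidance of $r_d$, $c_d$ and $1\oplus c_u$, it uses the Shading Lemma to shade $(0,2)$ and $(1,0)$, the avoidance of $r_d$ to shade $(0,1)$ (a point there creates an occurrence of $2413$), and the avoidance of $1\oplus c_u$ to shade $(0,0)$; this shows any occurrence of $c_u$ can be upgraded to one whose ``$3$'' and ``$1$'' are left-to-right minima, after which the ``$2$'' and ``$4$'' lie in two active cells violating either a column-edge or an up-edge. Note also that the logical order matters: $1\oplus c_u$ must be killed \emph{before} this reduction, since its avoidance is one of the shading ingredients. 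Without this reduction and the up-edge case, your case analysis for $c_u$ does not close.
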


\begin{proof}
    From Lemmas~\ref{lem:up_down_edge_const}, and~\ref{lem:col_row_edges_const} and
    the discussion above, we have that
    \[\SE(\Avn(r_d,c_d,c_u,1\oplus P)) \subseteq
        \WI_{\rl}(\UDC(B_n), \Avp(312,\bstrip{P}), \Avp(r_d,c_d,c_u,P)).\]

    To show the reverse inclusion,
    we partition $P$ into two sets depending on whether the permutation
    ends with its maximum or not.
    We set
    \[P_1 = \{\pi \in P: \bstrip{\pi}= \pi\} \text{ and }
        P_2 = \{ \pi\in P: \bstrip{\pi}\neq\pi\}.\]
    Both $\Av(312, \bstrip{P})$ and $\Av(r_d, c_d, c_u, P)$ are subclasses of
    $\Av(r_d, c_d, P_1)$, so we get
    \begin{align*}
         & \dperm(\WI_{\rl}(\UDC(B_n), \Avp(312,\bstrip{P}), \Avp(r_d,c_d,c_u,P)))
        \\ \subseteq&
        \dperm(\WI(\UDC(B_n), \Avp(r_d,c_d,P_1)))
        \\ \subseteq&
        \dperm(\WI(\D(B_n), \Avp(r_d,c_d,P_1))).
    \end{align*}
    As $P_1$ contains only sum-indecomposable permutations, by
    Lemma~\ref{lem:dperm_subset_av}, we have that
    \begin{equation*}
        \dperm(\WI_{\rl}(\UDC(B_n), \Avp(312,\bstrip{P}), \Avp(r_d,c_d,c_u,P)))
        \subseteq
        \Avn(r_d, c_d, 1\oplus P_1).
    \end{equation*}

    We also need to show that $c_u$ and $1\oplus P_2$ are avoided.
    Let $\sigma$ be a permutation in
    $\dperm(\WI_{\rl}(\UDC(B_n), \Avp(312,\bstrip{P}), \Avp(r_d,c_d,c_u,P)))$.
    We first show that for $\pi\in P_2\cup\{c_u\}$,
    $1\oplus\pi$ does not occur. By the hypothesis, we know that
    $\pi=\alpha\oplus 1$, with $\alpha$ sum-indecomposable.
    If $1\oplus\pi$ is contained in $\sigma$, then $\pi$ is fully
    contained in a rectangular region of the grid.
    In such a region, the active cells of the encoding are in the same
    row. Hence, $\pi$ is contained in a single row.
    Since the rows are increasing and $\alpha$ is sum-indecomposable, the only
    way to split the occurrence is to have an occurrence of $\alpha$ in a cell
    and an occurrence of $1$ in a cell to the right. This is not allowed by the
    way the vertices can be weighted.
    Hence, it is contained in a single cell which is also
    forbidden. Therefore, by contradiction, $1\oplus\pi$ is avoided for any
    $\pi$ in $P_2\cup\{c_u\}$.

    In particular $1\oplus c_u$ is avoided.
    Using the Shading Lemma to shade the
    cells $(0,2)$ and $(1,0)$, the avoidance of $r_d$ to shade $(0,1)$ and the
    avoidance of $1\oplus c_u$ to shade $(0,0)$, we see that if $c_u$ is
    contained in $\sigma$ then $\sigma$ contains an occurrence of the mesh pattern
    \[\mpattern{}{4}{1/3,2/1,3/2,4/4}{0/2,1/0,0/1,0/0}.\]
    An occurrence of this mesh pattern violates either the column-edges
    constraint or the up-edges constraints.
    Thus $c_u$ is avoided, and we have
    \begin{equation*}
        \dperm(\WI_{\rl}(\UDC(B_n), \Avp(312,\bstrip{P}), \Avp(r_d,c_d,c_u,P)))
        \subseteq
        \Avn(r_d, c_d, c_u,1\oplus P),
    \end{equation*}
    and, by Lemma~\ref{lem:se-inverses},
    \begin{equation*}
        \WI_{\rl}(\UDC(B_n), \Avp(312,\bstrip{P}), \Avp(r_d,c_d,c_u,P))
        \subseteq
        \SE(\Avn(r_d, c_d, c_u,1\oplus P)).
    \end{equation*}

    By Lemma~\ref{lem:row_col_interleaving}, the rows and columns of a permutation
    in $\Avn(r_d, c_d, c_u,1\oplus P)$ are decreasing and therefore, restricted
    to this set, the map $\SE$ is injective. Hence, $\SE$ is a bijection between
    $\Avn(r_d, c_d, c_u,1\oplus P)$ and
    \[
        \SE(\Avn(r_d,c_d,c_u,1\oplus P))=
        \WI_{\rl}(\UDC(B_n), \Avp(312,\bstrip{P}), \Avp(r_d,c_d,c_u,P)).\tag*{\qed}
    \]\renewcommand{\qedsymbol}{}
\end{proof}

Recall that $\gfUDC(x,y,z)$ is the generating function of independent sets of
$\UDC(B_n)$ where $x$ tracks the size of the grid, $y$ the
size of the independent set, and $z$ the number of rows of the
independent set. Therefore, $\gfUDC(x,y, \frac{z}{y})$ is the
generating function where $y$ tracks the number of cells labelled $y$ by
the labelling function $\rl$ and $z$ tracks the number of cells labelled $z$.
Let $C_1$ and $C_2$ be two permutation classes enumerated by
$A(x)$ and $B(x)$. Let $F(x)$ be the generating function for the number of
weighted independent sets where the cells labelled $y$ are weighted with a
non-empty permutation from $C_1$ and the cells labelled $z$ are weighted with a
non-empty permutation from $C_2$.
Then
\[ F(x) = \gfUDC\left(x, A(x)-1, \frac{B(x)-1}{A(x)-1}\right).\]
This leads to the following enumeration result:

\begin{corollary}\label{cor:gf_rdcdpi}
    Let $P$ be a set of permutations such that $\bstrip{P}$ contains only
    sum-indecomposable permutations, and
    $A(x)$ be the generating function of $\Av(r_d, c_d, c_u, 1\oplus P)$.
    Then $A(x)$ satisfies
    \[ A(x) = \gfUDC\left(x, C(x)-1, \frac{B(x)-1}{C(x)-1}\right) \]
    where $B(x)$ is the generating function of $\Av(r_d, c_d, c_u, P)$ and
    $C(x)$ is the generating function of $\Av(312, \bstrip{P})$.
\end{corollary}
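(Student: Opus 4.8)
The plan is to read the corollary off directly from the bijection of Theorem~\ref{thm:inf_rd_cd_cu} together with the substitution formula derived in the paragraph immediately preceding the statement. First I would recall that, by that theorem, for each $n\geq 0$ the set $\Avn(r_d,c_d,c_u,1\oplus P)$ is in bijection under $\SE$ with $\WI_{\rl}(\UDC(B_n), \Avp(312,\bstrip{P}), \Avp(r_d,c_d,c_u,P))$, and that the size of a permutation equals its number of left-to-right minima (the grid size $n$) plus the total size of the permutations weighting the active cells. Summing over $n$, the generating function $A(x)$ of $\Av(r_d,c_d,c_u,1\oplus P)$ is exactly the generating function counting all these weighted labelled independent sets, with $x$ simultaneously marking grid size and the sizes of the weights.

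Next I would set up the tracking of the two label classes. By definition of $\rl$, the cells labelled $z$ in an independent set $I$ are precisely the rightmost active cells, one in each occupied row, and the cells labelled $y$ are all the others; hence the number of $z$-labelled cells equals the number $\ell$ of occupied rows, and the number of $y$-labelled cells equals $k-\ell$, where $k$ is the size of $I$. Since $\gfUDC(x,y,z)$ marks grid size by $x$, independent-set size by $y$, and number of rows by $z$ (Proposition~\ref{prop:gf_UDR}), the reparametrization $z\mapsto z/y$ turns it into a series in which $y$ marks the $y$-labelled cells and $z$ marks the $z$-labelled cells, exactly as observed before the corollary.

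Finally I would perform the weighting. Each $y$-labelled cell is filled with a non-empty permutation of $\Av(312,\bstrip{P})$, whose generating function is $C(x)-1$, and each $z$-labelled cell with a non-empty permutation of $\Av(r_d,c_d,c_u,P)$, whose generating function is $B(x)-1$; because the cells are filled independently and their sizes add to the total size, this is precisely the substitution $y\mapsto C(x)-1$, $z\mapsto B(x)-1$ into $\gfUDC(x,y,z/y)$. Applying the generic identity stated before the corollary with $C_1=\Av(312,\bstrip{P})$ and $C_2=\Av(r_d,c_d,c_u,P)$ then gives
\[ A(x) = \gfUDC\!\left(x, C(x)-1, \frac{B(x)-1}{C(x)-1}\right), \]
which is the claimed formula. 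I do not expect a genuine obstacle here, as the real content lives in Theorem~\ref{thm:inf_rd_cd_cu} and Proposition~\ref{prop:gf_UDR}; the only point needing care is the bookkeeping behind $z\mapsto z/y$, namely that every occupied row contributes exactly one $z$-cell so that ``number of rows'' and ``number of $z$-labelled cells'' coincide, and that filling cells independently legitimately corresponds to substituting the two weight generating functions for the marked variables.
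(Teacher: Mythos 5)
Your proposal is correct and takes essentially the same approach as the paper: the paper derives this corollary directly from the bijection of Theorem~\ref{thm:inf_rd_cd_cu} combined with the substitution identity stated in the paragraph immediately before the corollary, exactly as you do. Your extra bookkeeping --- that each occupied row contributes exactly one $z$-labelled cell, so the reparametrization $z\mapsto z/y$ converts the row-tracking variable of $\gfUDC(x,y,z)$ into a marker for $z$-labelled cells --- is precisely the justification the paper leaves implicit.
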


As an example, we derive $A(x)$, the generating function of
$\Av(2413, 3142, 3124)$ which was first derived
by \cite{MR3633253} and appears in the OEIS as
\oeis{A033321}.
Since the basis is $\{r_d, c_d, c_u\}$, $B(x) = A(x)$. Moreover, $\Av(312)$ is
enumerated by the Catalan numbers and
$C(x)= \frac{1 - \sqrt{1 - 4x}}{2x}$. 
Hence, $A(x)$ satisfies
\[ A(x)=\gfUDC\left(x, A(x)-1, \frac{C(x)-1}{A(x)-1} \right).\]
The equation can be solved to get the explicit form of the generating function.

\section{Avoiding the row-down and column-up patterns}\label{sec:ru_cd}

In this section, we consider permutation classes described by weighted
independent sets of the graphs $\UD(B_n)$.
This corresponds to removing one of the three patterns from the results in
sections~\ref{sec:new-core} and~\ref{sec:gen_inflation}.

\begin{theorem}
    Let $P$ be a set of skew-indecomposable permutations such that
    all permutations of $\bstrip{P}$ are sum-indecomposable.
    Then $\SE$ is a bijection between $\Avn(r_d, c_u, 1\oplus P)$ and
    $\WI_{\rl}(\UD(B_n), \Avp(312,\bstrip{P}), \Avp(r_d,c_u,P))$.
\end{theorem}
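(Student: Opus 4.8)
\emph{The plan is} to prove the statement in the same two-inclusion style as Theorems~\ref{thm:inf_upcore} and~\ref{thm:inf_rd_cd_cu}, establishing $\SE(\Avn(r_d,c_u,1\oplus P))\subseteq\WI_{\rl}(\UD(B_n),\Avp(312,\bstrip P),\Avp(r_d,c_u,P))$ and the reverse inclusion, then upgrading to a bijection by injectivity. The genuinely new feature is that avoiding $r_d$ makes the rows increasing while avoiding $c_u$ makes the columns decreasing (Lemma~\ref{lem:row_col_interleaving}), a mixed monotonicity realised by neither $\uperm$ nor $\dperm$. So I would first record that $\SE$ is still injective on $\Avn(r_d,c_u,1\oplus P)$ and that there is a unique reconstruction $m$ sending a staircase encoding to the permutation with increasing rows and decreasing columns, with $\SE\circ m$ the identity (the analogue of Lemma~\ref{lem:se-inverses}); both follow because increasing rows fix the vertical order within each row and decreasing columns fix the horizontal order within each column, and these are the only interleaving choices.

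For the forward inclusion I would apply Lemma~\ref{lem:up_down_edge_const} to $\Av(c_u)$ and to $\Av(r_d)$ to see that the active cells form an independent set of both $\UU(B_n)$ and $\D(B_n)$, hence of $\UD(B_n)$. The labelling records exactly the failure of $c_u$: if a cell that is not rightmost in its row contained $312=\bstrip{c_u}$, then any point of an active cell to its right (higher in value, since rows increase) would complete a $c_u$, so non-rightmost cells avoid $312$ while rightmost cells avoid $c_u$. Prepending a left-to-right minimum turns an occurrence of $\pi\in P$ in a rightmost cell, or of $\bstrip\pi$ in a non-rightmost cell (completed by a point to its right when $\pi=\alpha\oplus1$), into an occurrence of $1\oplus\pi$; hence rightmost cells avoid $P$ and non-rightmost cells avoid $\bstrip P$.

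The substance is the reverse inclusion, that is, $m(E)\in\Avn(r_d,c_u,1\oplus P)$ for every $E$ in the weighted labelled independent set. The key structural observation is that, because the cells are $\UD(B_n)$-independent and columns decrease, any two active cells in different rows sit in skew-sum position (the lower-row cell has weakly larger column by up-independence and, as columns decrease, lies strictly to the lower-right), while two cells in the same row sit in direct-sum position. Thus any pattern occurrence built from cell points alone is a skew sum, over the occupied rows, of direct sums over the cells within each row. Consequently a skew-indecomposable pattern can occur only inside a single row, where a sum-indecomposable strip can occur only inside a single cell plus a possible trailing maximum in a cell to its right. Applying this to $\pi\in P$, after moving the leading $1$ of a putative $1\oplus\pi$ onto a left-to-right minimum so that $\pi$ lives in the cells north-east of it and uses no further minima, shows $1\oplus\pi$ is avoided: skew-indecomposability of $\pi$ confines it to one row, and sum-indecomposability of $\bstrip\pi$ together with the labelling rule ($\bstrip\pi$ forbidden in non-rightmost cells) confines it to one cell, contradicting the weights. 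The same computation applied to $c_u=312\oplus1$ and to $r_d$, both skew-indecomposable with sum-indecomposable strip, rules out $1\oplus c_u$ and $1\oplus r_d$.

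It remains to upgrade these to the avoidance of $c_u$ and of $r_d$ themselves, neither of which is of the form $1\oplus(\cdot)$. I would do this with the Shading Lemma as in the proof of Theorem~\ref{thm:inf_rd_cd_cu}: from a hypothetical occurrence one shades the boxes below and to the left using the already-established avoidances of the relevant $1\oplus(\cdot)$ patterns, obtaining a mesh pattern whose top points must lie in distinct active cells joined by an up- or a down-edge of $\UD(B_n)$, contradicting independence. \emph{I expect this upgrade to be the main obstacle}: unlike in Theorem~\ref{thm:inf_rd_cd_cu}, the graph $\UD(B_n)$ has no column-edges, so the contradiction must be extracted from the up- and down-edges alone, the two upgrades (for $c_u$ and for $r_d$) must be sequenced so that each may assume the other pattern's avoidance where needed, and one must choose the shifted points carefully when the points of the pattern straddle a left-to-right minimum. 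Once $r_d$, $c_u$, and $1\oplus P$ are avoided, applying $\SE$ to this inclusion and using $\SE\circ m=\mathrm{id}$ together with the injectivity of $\SE$ on $\Avn(r_d,c_u,1\oplus P)$ yields the asserted bijection.
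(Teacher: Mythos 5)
Your proposal is correct and follows essentially the same route as the paper: injectivity via the mixed increasing-rows/decreasing-columns reconstruction, the forward inclusion from Lemma~\ref{lem:up_down_edge_const} together with the labelling discussion of Section~\ref{sec:gen_inflation}, the avoidance of $1\oplus\pi$ by confining a skew-indecomposable pattern to a single row and its sum-indecomposable strip $\bstrip{\pi}$ to a single cell (the paper phrases this locally via the rightmost column and topmost cell of an occurrence, your global skew-sum/direct-sum decomposition is the same argument), and a final Shading-Lemma upgrade. The obstacle you flag is resolved in the paper exactly as you predict: it exhibits the two mesh patterns $(2413,\{(0,0),(1,0),(2,0),(0,1)\})$ and $(3124,\{(0,0),(1,0),(0,1),(0,2)\})$, obtained by shading with the Shading Lemma and the already-established $1\oplus(\cdot)$ avoidances, and checks that any occurrence violates an up-/down-edge, the increasing-row, or the decreasing-column constraint --- note that the monotonicity constraints, not only the edges, supply part of the contradiction.
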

\begin{proof}
    By Lemma~\ref{lem:row_col_interleaving}, the map $\SE$ is injective when
    restricted to $\Avn(r_d, c_u, 1\oplus P)$ as each permutation in this set
    has decreasing columns and increasing rows.
    Therefore, to show that $\SE$ is
    the claimed bijection, it is sufficient to show that
    \[
        \SE(\Avn(r_d,c_u,1\oplus P)) =
        \WI_{\rl}(\UD(B_n), \Avp(312,\bstrip{P}), \Avp(r_d,c_u,P)).
    \]

    By Lemma~\ref{lem:up_down_edge_const} any encoding in
    $\SE(\Avn(r_d,c_u,1\oplus P))$ is an independent set of $\UD(B_n)$.
    Moreover, since the rows are increasing, all active cells but the
    rightmost of each row avoid $312$ and $\bstrip{P}$ as discussed at the
    beginning of Section~\ref{sec:gen_inflation}.
    This implies
    \[
        \SE(\Avn(r_d,c_u,1\oplus P)) \subseteq
        \WI_{\rl}(\UD(B_n), \Avp(312,\bstrip{P}), \Avp(r_d,c_u,P)).
    \]
    Take $I$ in $\WI_{\rl}(\UD(B_n), \Avp(312,\bstrip{P}), \Avp(r_d,c_u,P))$.
    We consider $\sigma$, the permutation obtained from $I$ by building the
    permutation with decreasing columns and increasing rows. We show that
    $\sigma$ is in $\Avn(r_d, c_u, 1\oplus P)$.
    We start by showing that $\sigma$ avoids $1\oplus r_d$, $1\oplus
        c_u$ and $1\oplus P$.
    In an occurrence of any of these patterns in $\sigma$, we can assume that
    the $1$ is a left-to-right minimum. Hence, we have to show that $r_d$,
    $c_u$ and $P$ are avoided in the square formed by the set of cells that
    are north and east of a left-to-right minimum. Let $\pi$ be any pattern
    in $\{r_d, c_u\}\cup P$. We know that $\pi$ is skew-indecomposable and that
    $\bstrip{\pi}$ is sum-indecomposable.

    Assume that $\pi$ occurs in a square region of the cells that are north and
    east of a left-to-right minimum. We consider the rightmost column in the
    region that contains a point of the occurrence of $\pi$. In this column, we
    consider the topmost cell that contains such a point.
    In Figure~\ref{fig:no_spanning_rdcu}, this cell is colored blue.
    By construction, the gray region is empty.
    \begin{figure}[htpb]
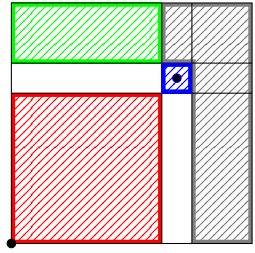

        \centering
        \includestandalone{figures/no_splitting}
        \caption{Decomposition of an occurrence of $\pi$ in a square
            region.}%
        \label{fig:no_spanning_rdcu}
    \end{figure}
    Moreover, there is no point of the permutation
    in the green region (resp.\ the red region) because the set of active
    cells is an independent set of $\D(B_n)$ (resp.\ $\UU(B_n)$).
    Since the columns are decreasing,
    if the occurrence contains any point in a cell below the blue one,
    $\pi$ is skew-decomposable.
    Hence, the only active cell
    in the column is the blue one. Finally, since the row is increasing and
    $\bstrip{\pi}$ is sum-indecomposable, $\pi$ is either fully contained in the blue
    cell, which is forbidden, or $\bstrip{\pi}$ is fully contained in a cell that is
    not the rightmost active one in the row, which is also forbidden.
    Consequently, $1\oplus r_d$, $1\oplus c_u$ and $1\oplus P$ are avoided.
    Using shading arguments as done in previous proofs, we can show that if $\sigma$
    contains an occurrence of $r_d$ or $c_u$ then $\sigma$ contains an occurrence
    of either of the mesh patterns
    \[\mpattern{}{4}{1/2,2/4,3/1,4/3}{0/0,1/0,2/0,0/1} \qquad \text{or}\qquad
        \mpattern{}{4}{1/3,2/1,3/2,4/4}{0/0,1/0,0/2,0/1}.\]
    An occurrence of either of those pattern violates the edge,
    increasing rows, or decreasing column constraints. Hence, $\sigma$ is in
    $\Avn(r_d,c_u,1\oplus P)$ and
    \[\SE(\Avn(r_d,c_u,1\oplus P)) \supseteq
        \WI_{\rl}(\UD(B_n), \Avp(312,\bstrip{P}), \Avp(r_d,c_u,P)).\]
    This proves that the image of $\Avn(r_d,c_u,1\oplus P)$ under $\SE$ is the set
    claimed in the theorem.
\end{proof}

In order to enumerate these classes, we first enumerate the independent
sets of $\UD(B_n)$ while keeping track of the number of rows.

\begin{proposition}\label{prop:gf_UD}
    The number of independents set of size $k$ in $\UD(B_n)$ occupying
    $\ell$ rows
    is given by the coefficient of $x^n y^k z^\ell$ in the
    generating function that satisfies
    \[ \gfUD(x,y,z) = 1 + x\gfUD(x,y, z) + D(x,y,z)\gfUD(x,y,z),\]
    where
    \[ D(x,y,z) = \frac{xyz(xy^2z-x+1)}{(xyz+x-1)(xy+x-1)}. \]
\end{proposition}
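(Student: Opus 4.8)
The plan is to mirror the ``peel off the topmost row'' decomposition used in the proof of Proposition~\ref{prop:gf_UDR}, but to account for the fact that $\UD(B_n)$ has no column-edges. Writing $\gfUD(x,y,z)=\sum_n g_n(y,z)x^n$, I would split every independent set $I$ of $\UD(B_n)$ according to whether the topmost row (the cells $(1,j)$) is empty. If row $1$ is empty, the induced subgraph on rows $2,\dots,n$ is isomorphic to $\UD(B_{n-1})$ via $(i,j)\mapsto(i-1,j-1)$, contributing the term $x\gfUD$. If row $1$ is non-empty, let $k=\max\{j:(1,j)\in I\}$ be the rightmost occupied column in row $1$; the claim is that $I$ splits as a \emph{first block} living in columns $1,\dots,k$ together with an independent set of the induced graph on the cells $(\ell,m)$ with $\ell,m>k$, which is again a copy of $\UD(B_{n-k})$. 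This yields $\gfUD = 1 + x\gfUD + D\gfUD$, and the work is to identify $D$.

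First I would verify that the block/remainder split is an honest decoupling. Using only that $(1,k)\in I$, an up-edge rules out every cell $(\ell,m)$ with $\ell>1$ and $m<k$, and a down-edge (whose rectangle condition $\ell\le k$ then holds) rules out every cell $(\ell,m)$ with $1<\ell\le k$ and $m>k$. Checking the remaining cases shows that no edge of $\UD(B_n)$ joins a cell in columns $\le k$ to a cell with both coordinates exceeding $k$, so the two regions are independent and the count factors as $D\cdot\gfUD$. Here lies the essential difference from the column-core setting: because there are no column-edges, the argument does not force the block into row $1$. Indeed the cells $(\ell,k)$ with $2\le\ell\le k$, the part of column $k$ below row $1$, survive, and these are exactly the extra active cells the block may contain.

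Next I would describe the admissible blocks and translate them into a generating function. For fixed $k$, a block is a choice of occupied row-$1$ columns $S\subseteq\{1,\dots,k\}$ with $k\in S$, together with a set $T\subseteq\{2,\dots,k\}$ of occupied cells in column $k$ below row $1$. Cells in a common row or common column are never adjacent in $\UD(B_n)$, so the only remaining constraint is the down-edge between a low row-$1$ cell $(1,j)$ and a column-$k$ cell $(\ell,k)$, which occurs exactly when $\ell\le j<k$; hence admissibility reduces to $\min T>\max S'$, where $S'=S\setminus\{k\}$. Since such a block spans $k$ units of the staircase, carries $1+|S'|+|T|$ active cells, and meets $1+|T|$ rows, I would write
\[
    D(x,y,z)=\sum_{k\ge1} x^k\,yz\,W_k,\qquad
    W_k=\sum_{\substack{S'\subseteq\{1,\dots,k-1\},\,T\subseteq\{2,\dots,k\}\\ \min T>\max S'}} y^{|S'|+|T|}z^{|T|}.
\]
Splitting $W_k$ according to $t=\min T$ (with a separate term for $T=\emptyset$) gives, with $a=1+y$ and $b=1+yz$,
\[
    W_k = a^{k-1} + yz\sum_{t=2}^{k} a^{t-1} b^{k-t},
\]
and summing the resulting geometric series over $k$, using $a-b=y(1-z)$, $1-xa=1-x-xy$, $1-xb=1-x-xyz$, and $b-yza=1-y^2z$, collapses $D$ to the stated closed form.

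The main obstacle is the second step: pinning down the block structure exactly. One must be sure that column $k$ below row $1$ is the only place extra activity can occur, that admissibility reduces cleanly to $\min T>\max S'$, and that block and remainder genuinely share no edge so that sizes, rows and grid-length all add and the count is multiplicative in $x,y,z$. The geometric summation afterwards is routine. I would also spot-check the formula against $\UD(B_1)$ and $\UD(B_2)$ (the latter has no edges at all, so all $2^3$ subsets occur), both of which match the claimed expression for $D$.
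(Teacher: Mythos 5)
Your proposal is correct and takes essentially the same route as the paper's proof: peel off the topmost row, locate the rightmost vertex $(1,k)$ of that row, observe that the surviving cells split into the hook (row-$1$ cells to its left together with the column-$k$ cells below it) disconnected from a copy of $\UD(B_{n-k})$, and note that the down-edges between the two legs force exactly your condition $\min T>\max S'$, which is the paper's ``cells above the topmost active cell of the vertical leg'' observation. The only difference is bookkeeping in the final summation: the paper factors off the two disconnected end-cells of the hook before summing, whereas you sum the double geometric series directly; both collapse to the same closed form for $D(x,y,z)$.
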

\begin{proof}
    We consider the topmost row of $B_n$.  If it contains no vertex of the
    independent set, then we are looking at the independent set in a smaller
    core graph and it contributes $x\gfUD(x,y,z)$ to $\gfUD(x,y,z)$.

    If the topmost row contains vertices of the independent set, we consider its
    rightmost vertex. The vertex is highlighted in blue in
    Figure~\ref{fig:ud_core_decomposition}. The cells in the yellow region
    do not contain any vertices of the independent set because they are
    connected to the blue cell by an edge. The vertices of the independent set
    are, therefore, in the hook formed by the white and blue cells, and the
    pink region. The pink region is completely disconnected from the hook and
    hence the vertices of the independent set in this region
    correspond to an independent set of a smaller core.

    To find $\gfUD$, we need to enumerate the independent set of the hook
    that contains the corner cell of the hook.
    We say that the \emph{leg length} of the hook is the number of cells in the
    horizontal strip.
    If the coefficient
    of $x^n y^k z^\ell$ in $D(x,y,z)$ is the number of such sets of
    $k$ vertices using $\ell$ rows in the hook of leg length $n$, then
    \[\gfUD(x,y,z) = 1 + x\gfUD(x,y,z) + D(x,y,z)\gfUD(x,y,z).\]

    \begin{figure}[htpb]
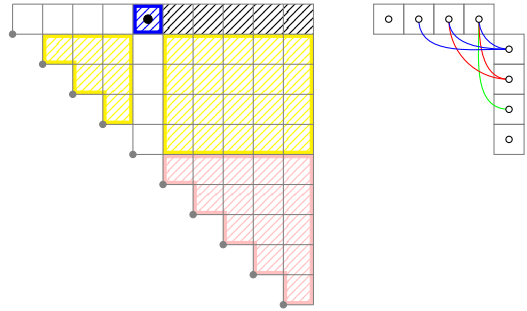

        \centering
        \includestandalone{figures/decomp_ud_core}
        \caption{Decomposition of an independent set according to the rightmost
            vertex in the top row. The picture on the left shows the whole
            staircase.
            The picture on the right shows the induced subgraph of the hook
            with a corner in the blue cell.}%
        \label{fig:ud_core_decomposition}
    \end{figure}

    To find the generating function $D(x,y,z)$ we first notice that any vertices
    that
    we take in the vertical leg add to the row count of the independent set
    while vertices
    in the horizontal leg do not change the row count.
    First, the case where the hook is a single cell
    contributes $xyz$ to
    $D(x,y,z)$.

    Second, if the leg length of the hook is greater than $1$,
    the cells at the end of each
    leg are not connected to the hook by any edges of the graph.
    Hence, we have complete freedom to put
    them in the independent set. Therefore, the second case is
    of the form $(xyz)(xyz+x)(y+1)(\cdots)$. As they are accounted
    for, we completely ignore the corner cell and the two cells at the end of
    the leg and focus on
    enumerating the part of the independent set in the remaining cells.

    If no
    cell of the vertical leg is in the independent set, then any
    cell of the horizontal leg can be in it.
    Otherwise, if there are $i$ cells above
    the topmost active cell of the vertical leg then the $i$ leftmost
    cells of the
    horizontal leg are the only cells from that leg that can be
    in the independent set (see Figure~\ref{fig:hook_gf}).
    \begin{figure}[htpb]
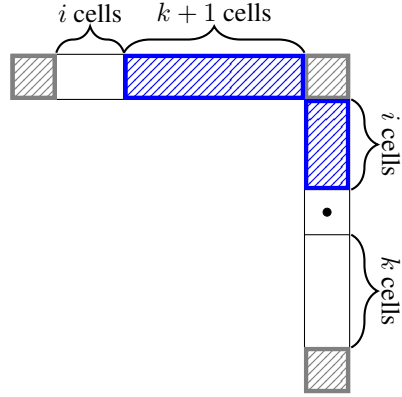

        \centering
        \includestandalone{figures/hook_analyse}
        \caption{The structure of an independent set in the hook. The only
            active cells are in the regions that are not shaded.}%
        \label{fig:hook_gf}
    \end{figure}

    Hence,
    \begin{align*}
        D(x,y,z) & = xyz + (xyz)(xyz+x)(y+1)                                                        \\
                 & \qquad\cdot\left( \frac{1}{1-x(y+1)} + \frac{xyz}{(1-x(y+1))(1-x(1+yz))} \right) \\
                 & = \frac{xyz(xy^2z-x+1)}{(xyz+x-1)(xy+x-1)}
    \end{align*}
    which completes the proof.
\end{proof}

Using the same reasoning as in Section~\ref{sec:gen_inflation} we derive the
following enumeration result:
\begin{corollary}\label{cor:gf_rdcu}
    Let $P$ be a set of skew-indecomposable permutations such that all
    permutations in $\bstrip{P}$ are sum-indecomposable and
    $A(x)$ be the generating of $\Av(r_d, c_u, 1\oplus P)$.
    Then $A(x)$
    satisfies
    \[ A(x) = \gfUD\left(x, C(x)-1, \frac{B(x)-1}{C(x)-1}\right) \]
    where $B(x)$ is the generating function of $\Av(r_d, c_u, P)$
    and
    $C(x)$ is the generating function of $\Av(312, \bstrip{P})$.
\end{corollary}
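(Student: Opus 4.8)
The plan is to repeat the generating-function transfer that produced Corollary~\ref{cor:gf_rdcdpi}, with the graph $\UDC(B_n)$ replaced by $\UD(B_n)$. The bijection theorem opening this section identifies, for each $n$, the set $\Avn(r_d,c_u,1\oplus P)$ with the weighted labelled independent sets $\WI_{\rl}(\UD(B_n),\Avp(312,\bstrip{P}),\Avp(r_d,c_u,P))$. Since every permutation has a well-defined number of left-to-right minima, taking the disjoint union over $n\geq 0$ shows that $A(x)$ is the generating function of all these weighted labelled independent sets, graded by the total size of the permutation. The point to keep in mind, as in Section~\ref{sec:gen_inflation}, is that under $\SE$ the size of $\sigma$ is the number $n$ of left-to-right minima (the grid size, which $x$ tracks in $\gfUD$) plus the sizes of the permutations weighting the active cells.

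First I would translate the statistics recorded by Proposition~\ref{prop:gf_UD}. There $y$ marks the size $k$ of the independent set and $z$ the number $\ell$ of occupied rows of $\UD(B_n)$. Because $\rl$ assigns the label $z$ to exactly one cell per occupied row (the rightmost) and the label $y$ to the other $k-\ell$ cells, performing the substitution $z\mapsto z/y$ converts $\gfUD$ into a series in which $y$ now marks the number of $y$-labelled cells and $z$ the number of $z$-labelled cells.

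Next I would insert the weights by the symbolic method. Once the labelling is fixed, the weight of each cell is chosen independently, so replacing every $y$-labelled cell by a non-empty permutation of $\Av(312,\bstrip{P})$ and every $z$-labelled cell by a non-empty permutation of $\Av(r_d,c_u,P)$ amounts, in the converted series, to the substitutions $y\mapsto C(x)-1$ and $z\mapsto B(x)-1$, the $-1$ discarding the empty permutation. The $x$-powers coming from the weights combine additively with the grid size, so that the resulting $x$-degree equals $|\sigma|$. Composing this with the conversion $z\mapsto z/y$ gives the single substitution $y\mapsto C(x)-1$, $z\mapsto\frac{B(x)-1}{C(x)-1}$ into $\gfUD$, and hence $A(x)=\gfUD\!\left(x,\,C(x)-1,\,\frac{B(x)-1}{C(x)-1}\right)$.

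The only delicate point is the bookkeeping linking the labelling to the substitution: one must verify that the row statistic tracked by $z$ in $\gfUD$ is genuinely the number of $z$-labelled cells, so that after the substitution exactly one cell per occupied row is weighted from $\Av(r_d,c_u,P)$ and all remaining cells from $\Av(312,\bstrip{P})$. Everything else is the formal transfer of generating functions already carried out for Corollary~\ref{cor:gf_rdcdpi}, so I would simply invoke that reasoning.
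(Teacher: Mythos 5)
Your proposal is correct and matches the paper's own treatment: the paper derives this corollary by invoking exactly the reasoning of Section~\ref{sec:gen_inflation}, namely the bijection theorem for $\Av_n(r_d,c_u,1\oplus P)$, the substitution $z\mapsto z/y$ in $\gfUD$ to convert the (size, rows) statistics into counts of $y$- and $z$-labelled cells, and then the weight substitutions $y\mapsto C(x)-1$, $z\mapsto B(x)-1$. Your bookkeeping of the size grading (grid size plus weight sizes) and of the one-$z$-label-per-occupied-row correspondence is precisely the justification the paper relies on.
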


\section{Avoiding \texorpdfstring{$r_d$}{rd} and
  \texorpdfstring{$2134$}{2134}}\label{sec:rd_2134}

In this section, we consider the pattern $2134$ that has not been considered
yet. As when we analysed $r_u$, $c_u$, $r_d$ and $c_d$, we
consider the two black points of Figure~\ref{fig:2134} as left-to-right minima
of the permutation. Then, we study the effect of the two red points on the
staircase encoding of the permutation, its rows and its columns.
An
occurrence of $2134$ where the two black points are left-to-right minima cannot
have a point in the cell of the leading diagonal of the grid. Hence, the
pattern does not enforce any restrictions on those cells.
However, in the
remaining cells, the pattern $2134$ has the same effect as $123$ has on the
grid $B_{n-1}$ since there are always two left-to-right minima to the left and
below those cells.

\begin{figure}[htpb]
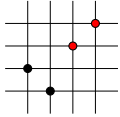

    \centering
    \includestandalone{figures/2134}
    \caption{The pattern 2134.}%
    \label{fig:2134}
\end{figure}

Let $G(B_{n-1})$ be a graph with cells of $B_{n-1}$ as vertices.
Let $S$ be a subset of $B_n$.
For the remainder of the paper, we will make a small abuse of
the definition and say that $S$ is an
independent set of $G(B_{n-1})$ if the set
\[\{(x,y-1) : (x,y)\in S \textrm{ and } x\neq y\}\]
is an independent set of $G(B_{n-1})$.
In other words, a subset of $B_n$ is an independent set of a graph on $B_{n-1}$
if it is an independent set of the graph obtained by overlaying $G(B_{n-1})$ on
$B_n$ as in Figure~\ref{fig:smaller-indep-set}.

\begin{figure}[htpb]
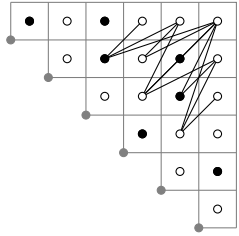

    \centering
    \includestandalone{figures/indep_sub_graph}
    \caption{
        The graph $\UU(B_5)$ on the staircase grid $B_6$. The black vertices form
        an independent set of $\UU(B_5)$ for the grid $B_6$.
    }%
    \label{fig:smaller-indep-set}
\end{figure}

Using similar arguments as we did for
Lemmas~\ref{lem:row_col_interleaving},~\ref{lem:up_down_edge_const}
and~\ref{lem:col_row_edges_const}, we can prove the following lemmas.
\begin{lemma}\label{lem:const_2134}
    Let $\sigma$ be a permutation in $\Avn(2134)$.
    We consider the set $C$ of active cells of the staircase encoding of $\sigma$
    that are not in the main diagonal of the grid. Then
    \begin{itemize}
        \item $C$ is an independent set of $U(B_{n-1})$
        \item cells in $C$ contain decreasing permutations
    \end{itemize}
    Moreover, if we remove the cells of the leading diagonal from $\sigma$,
    the rows and columns are decreasing.
\end{lemma}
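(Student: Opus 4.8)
The plan is to adapt the pattern-insertion arguments of Lemmas~\ref{lem:row_col_interleaving},~\ref{lem:up_down_edge_const} and~\ref{lem:col_row_edges_const}, replacing the single left-to-right minimum used there by a pair of them. The key observation is the decomposition $2134 = 21\oplus 12$: the leading descent $21$ will always be supplied by two left-to-right minima of $\sigma$ (any two of them are automatically decreasing), while the trailing ascent $12$ will be supplied by an increasing configuration of points drawn from the non-diagonal cells. Concretely, the engine of every case is the following claim, which I would establish first: for any non-diagonal cell $(i,j)$ of the staircase grid (so $i<j$), the $i$-th and $j$-th left-to-right minima of $\sigma$ both lie strictly below and strictly to the left of every point contained in $(i,j)$. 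Here the point worth stressing is that even the $i$-th minimum lies below the cell, since the points of $(i,j)$ occupy a value band strictly above the level of the $i$-th minimum, while its position band lies strictly to the right of the $i$-th minimum. This is exactly what fails for a diagonal cell $(i,i)$, where only the $i$-th minimum is below and to the left, and it is the reason the diagonal cells carry no constraint and must be removed.

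Granting this claim, each of the three conclusions is a short contradiction argument. For the second bullet, if a cell of $C$ contained an ascent (an occurrence of $12$), I would take that ascending pair as the $12$ and the $i$-th and $j$-th minima as the $21$, producing $2134$ in $\sigma$; hence every cell of $C$ is decreasing. For the statement that rows and columns are decreasing off the diagonal, I would argue the same way: an increasing interleaving of two non-diagonal cells lying in a common row (respectively column) yields a rising pair, again completed to $2134$ by the two minima sitting below and to the left of both cells. For the first bullet, I would take two non-diagonal cells joined by an edge of the (shifted) up-core $\UU(B_{n-1})$; such an edge means, after undoing the shift $(x,y)\mapsto(x,y-1)$, that one cell is strictly above and to the right of the other, so one point from each cell forms a $12$, and the minima associated to the lower-left cell furnish the $21$. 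In every case the verification reduces to checking that the positions and values fall in the order $2,1,3,4$, which is immediate from the band description of the cells.

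The step I expect to be the main obstacle is the structural claim about the two minima, that is, pinning down precisely which left-to-right minima lie below and to the left of a given non-diagonal cell and checking that there are always (at least) two of them. This is where the shift to $B_{n-1}$ and the exclusion of the diagonal are forced: a diagonal cell $(i,i)$ is separated in position from every later minimum, so only one minimum is below and to the left and no $21$ can be formed, whereas a non-diagonal cell spans enough of the grid that the $i$-th and $j$-th minima are both available. Once this is isolated and proved carefully — most cleanly by working directly with the value bands and position bands defining the cells, rather than with the matrix coordinates — the three conclusions follow by the routine insertion arguments above, exactly parallel to the proofs already given for $r_u$, $c_u$, $r_d$ and $c_d$.
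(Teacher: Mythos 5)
Your proposal is correct and follows essentially the same route the paper intends: the paper's own ``proof'' is only the remark that the arguments of Lemmas~\ref{lem:row_col_interleaving}, \ref{lem:up_down_edge_const} and~\ref{lem:col_row_edges_const} carry over, using the observation that every non-diagonal cell $(i,j)$ has at least two left-to-right minima (the $i$-th and $j$-th) below and to its left, which supply the $21$ of $2134$ exactly as your key claim states. Your band-description verification of that claim, the explanation of why diagonal cells escape all constraints, and the three insertion arguments match the paper's intended reasoning, so there is nothing to correct.
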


\begin{lemma}\label{lem:const_2134_rd}
    Let $\sigma$ be a permutation in $\Avn(2134, r_d)$.
    The set of active cells of the staircase encoding of $\sigma$ is
    and independent set of $\R(B_{n-1})$.
\end{lemma}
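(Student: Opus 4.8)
The plan is to unwind the abuse of notation introduced just before the statement and reduce the claim to a simple dichotomy on the orientation of the rows. By definition, saying that the set of active cells is an independent set of $\R(B_{n-1})$ means that, after discarding the active cells lying on the main diagonal and shifting each remaining cell $(x,y)$ to $(x,y-1)$, no two of the resulting cells are joined by an edge of $\R(B_{n-1})$. Since the edges of $\R$ connect precisely the cells that share a first coordinate (a common row), and since the shift $(x,y)\mapsto(x,y-1)$ leaves the first coordinate untouched, this is equivalent to showing that each row of the staircase grid contains \emph{at most one} active cell off the main diagonal. So the whole lemma reduces to this single counting statement.

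To establish it, I would play the two patterns $2134$ and $r_d=2413$ against each other, using that they force opposite orientations on the rows. On one hand, since $\sigma\in\Av(2134)$, Lemma~\ref{lem:const_2134} tells us that once the leading-diagonal cells are removed the rows of $\sigma$ are \emph{decreasing}: for two cells $(i,j)$ and $(i,k)$ in the same row with $j<k$, every point in $(i,j)$ lies above every point in $(i,k)$. On the other hand, since $\sigma\in\Av(r_d)$, part~\ref{sublem:row_inc} of Lemma~\ref{lem:row_col_interleaving} gives that the rows of $\sigma$ are \emph{increasing}, so for the same two cells every point in $(i,j)$ lies below every point in $(i,k)$.

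Combining these, I would argue by contradiction: suppose some row $i$ contained two off-diagonal active cells $(i,j)$ and $(i,k)$ with $i<j<k$. Both are non-empty, so I choose a point in each; the decreasing condition forces the point of $(i,j)$ to be higher than that of $(i,k)$, while the increasing condition forces it to be lower, a contradiction. Hence each row has at most one off-diagonal active cell, which is exactly the independent-set condition isolated in the first paragraph, and the lemma follows. The only real care needed, and thus the main obstacle, is the bookkeeping around the main diagonal: I must make sure that the diagonal cell of a row (which avoidance of $2134$ places outside the scope of Lemma~\ref{lem:const_2134}, and which the abuse of notation discards anyway) never becomes the source of a spurious conflict, so that the two orientation statements are compared only on genuinely off-diagonal cells.
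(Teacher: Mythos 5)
Your proof is correct and matches the argument the paper intends: the paper gives no explicit proof, stating only that the lemma ``follows from similar arguments'' as Lemmas~\ref{lem:row_col_interleaving}, \ref{lem:up_down_edge_const} and~\ref{lem:col_row_edges_const}, and your proof is precisely that argument --- the analogue of the proof of Lemma~\ref{lem:col_row_edges_const}, pitting the decreasing off-diagonal rows from Lemma~\ref{lem:const_2134} against the increasing rows from Lemma~\ref{lem:row_col_interleaving}\ref{sublem:row_inc} to conclude that each row has at most one off-diagonal active cell. Your care in unwinding the shifted-grid abuse of notation and in restricting the comparison to off-diagonal cells is exactly the bookkeeping the paper leaves implicit.
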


For the rest of this section, we let $P$ be set of patterns, such that for
all $\pi$ in $P$:
\begin{itemize}
    \item $\pi$ avoids \mptwodec{}, and
    \item $\pi \not\in \perms \oplus (\Avp(12)\backslash\{1\})$.
\end{itemize}
Note, $r_d$ and $2134$ satisfy these conditions.
As further examples, the permutations $312$ and $1423$ also satisfy the
conditions.

For two graphs on staircase grids of different sizes, we define the \emph{merge}
of those graphs by gluing them by the top right corner cell.
Figure~\ref{fig:merge_core} shows an example of a merge. The merge of two graphs
$A$ and $B$ is denoted $A\mergecore B$.

\begin{figure}[htpb]
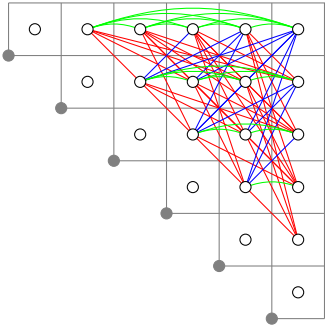

    \centering
    \includestandalone{figures/merge_core}
    \caption{
        The merge of $\UU(B_5)$, $\R(B_5)$ and $\D(B_6)$. The green edges
        come from $\R(B_5)$, the blue ones from $\UU(B_5)$, and the red ones from
        $\D(B_6)$.
    }%
    \label{fig:merge_core}
\end{figure}

In order to describe the structure of the staircase encodings
of the permutations in
$\Av(r_d, 2134, 1 \oplus P)$,
we define a labelling $\phi$ where the set of labels is
$\{y,z,s,t\}$. For a subset $I$ of the staircase grid and vertex $v$ of this
set, we let
\begin{equation*}
    \phi(v,I) =
    \begin{cases}
        y & \text{if $v$ is not in the leading diagonal},              \\
        z & \text{if there is a $v' \in I$ in the same column},        \\
        s & \text{if there is a $v' \in I$ that is north east of $v$}, \\
        t & \text{otherwise}.
    \end{cases}
\end{equation*}
Figure~\ref{fig:indep-DmUR} shows an independent set labelled with $\phi$.

\begin{figure}[htpb]
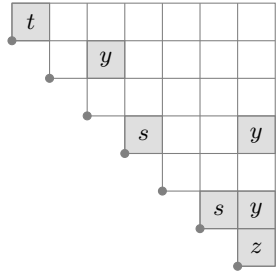

    \centering
    \includestandalone{figures/complex_indep_set}
    \caption{The labelling of an independent set with $\phi$.}%
    \label{fig:indep-DmUR}
\end{figure}

As we did for $\WI_{\rl}$, we define $\WI_\phi(G,Y,Z,S,T)$, as the set of
weighted independent sets of $G$ such that for a vertex $v$ in the independent set
$I$, the weights of $v$ is an element of
\begin{itemize}
    \item $Y$ if $\phi(v,I)=y$,
    \item $Z$ if $\phi(v,I)=z$,
    \item $S$ if $\phi(v,I)=s$, and
    \item $T$ if $\phi(v,I)=t$.
\end{itemize}

During the rest of the section we consider the set
\[
    \WI_\phi(\D(B_n)\mergecore\UR(B_{n-1}),\Avp(12),
    \Avp(r_d,2134,P)\backslash\{1\}, \Avp(213,\bstrip{P}),
    \Avp(r_d,2134,P)).
\]
For sake of brevity, we name it $\WIs_n$ in this section.

\begin{theorem}\label{thm:inf_rd_2134}
    There is a bijection between $\Avn(r_d, 2134, 1 \oplus P)$ and $\WIs_n$.
\end{theorem}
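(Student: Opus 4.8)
The plan is to prove the statement by the same two-inclusion strategy used for all the earlier bijection theorems, with the staircase encoding $\SE$ playing the role of the bijection (reconstructed on the merged grid $\D(B_n)\mergecore\UR(B_{n-1})$ via the diagonal minima). First I would show $\SE(\Avn(r_d,2134,1\oplus P))\subseteq\WIs_n$; then I would show that every element of $\WIs_n$ is the encoding of some permutation in the class; finally injectivity follows because, inside the class, the row and column interleavings are forced. As usual, the reverse inclusion carries the work, and here it is genuinely harder because an occurrence can now distribute itself across the two merged graphs.

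For the forward inclusion I would take $\sigma\in\Avn(r_d,2134,1\oplus P)$ and read off its encoding. Avoidance of $r_d$ gives, by Lemma~\ref{lem:up_down_edge_const}, that the active cells form an independent set of $\D(B_n)$, and avoidance of $2134$ gives, by Lemmas~\ref{lem:const_2134} and~\ref{lem:const_2134_rd}, that the off-diagonal active cells form an independent set of $\UR(B_{n-1})$ whose contents are decreasing, i.e. in $\Avp(12)$; together these are exactly the edges of the merge, and the $y$-labelled cells are correctly weighted. It then remains to check the diagonal cells against $\phi$. Since each diagonal cell's content lies north-east of the left-to-right minimum at the corner of that cell, avoidance of $1\oplus P$ forces that content to avoid $P$, which handles the $t$-label weight $\Avp(r_d,2134,P)$. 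For an $s$-labelled cell there is an active cell strictly to its north-east, so an occurrence of $213$ in the cell together with that point would build a $2134$, and an occurrence of $\bstrip\pi=\alpha$ (where $\pi=\alpha\oplus1\in P$) together with that point would build $\pi$ and hence $1\oplus\pi$; thus the content lies in $\Avp(213,\bstrip{P})$. The $z$-label I would treat by the analogous column argument, explaining in particular why the one-point permutation $1$ must be excluded there.

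For the reverse inclusion I would start from $I\in\WIs_n$, reconstruct the unique permutation $\sigma$ with increasing rows whose off-diagonal columns are decreasing and whose diagonal entries are the left-to-right minima, and show $\sigma\in\Avn(r_d,2134,1\oplus P)$. Avoidance of $r_d$ follows by a Shading Lemma argument in the spirit of the earlier proofs, reducing a hypothetical occurrence to a mesh pattern that would violate a down-edge or the increasing-row constraint. The new content is ruling out $2134$ and $1\oplus P$. For $1\oplus\pi$ with $\pi\in P$ I would assume the leading $1$ is a left-to-right minimum, so $\pi$ sits in the square of cells north-east of it; using that $\pi$ avoids \mptwodec{} and that $\pi\notin\perms\oplus(\Avp(12)\setminus\{1\})$ (both of which $r_d$ and $2134$ also satisfy), together with the increasing rows and decreasing off-diagonal columns, I would show the occurrence cannot be split across two cells, forcing $\pi$ into a single cell and contradicting the weighting. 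A parallel decomposition argument keyed to the north-east and column labels then shows $2134$ itself cannot occur, using precisely that $s$-cells avoid $213$ and $z$-cells exclude the single point; applying $\SE$ and Lemma~\ref{lem:se-inverses} gives the reverse containment.

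The step I expect to be the main obstacle is the reverse-inclusion analysis of $2134$ across the merge boundary: one must show that a would-be occurrence spreading over a diagonal cell, an off-diagonal cell to its north-east, and the diagonal minima is forbidden by exactly the label-dependent weights ($\Avp(12)$ on $y$-cells, $\Avp(213,\bstrip{P})$ on $s$-cells, and the exclusion of $1$ on $z$-cells), while no legal filling is lost. Verifying that the four cases of $\phi$ are mutually exclusive and jointly account for every constraint is the delicate bookkeeping that makes the reconstruction inverse to $\SE$; once it is in place, injectivity is immediate, since by Lemma~\ref{lem:row_col_interleaving} the rows of any permutation in $\Avn(r_d,2134,1\oplus P)$ are increasing and, together with the decreasing off-diagonal columns from Lemma~\ref{lem:const_2134}, this pins down $\sigma$ from its encoding.
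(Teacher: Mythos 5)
Your proposal rests on a false premise: that $\SE$ restricted to $\Avn(r_d,2134,1\oplus P)$ is injective (``the row and column interleavings are forced'', ``this pins down $\sigma$ from its encoding''). It is not, and this is exactly why this theorem --- unlike every earlier one in the paper --- asserts ``there is a bijection'' rather than ``$\SE$ is a bijection''. In this class the diagonal cell of a column may place some of its points to the left and some to the right of the decreasing sequence formed by the off-diagonal cells above it, and neither the increasing-row nor the decreasing-off-diagonal-column constraint determines that split. Concretely, $31542$ and $31254$ both lie in $\Av(2413,2134)$ (take $P=\emptyset$), both have exactly two left-to-right minima, and both have the same staircase encoding with the same labels: cell $(1,2)$, labelled $y$, contains $21$, and cell $(2,2)$, labelled $z$, contains $1$. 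Your map identifies these two permutations, so it cannot be a bijection; worse, under your reading the $z$-cell would carry weight $1$, which is explicitly excluded from $\WIs_n$, so your forward inclusion does not even land in the right set. No argument along the lines you sketch can close this, because the deficiency is in the choice of map, not in the pattern-avoidance verifications.

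The paper's proof builds a different map $f$: the weight of a $z$-labelled cell is \emph{not} its content but a permutation $\alpha m\beta$ (with $m$ the maximum) of size one larger; the cell of the encoding receives $\alpha\beta$, and $f$ places the points of $\alpha$ to the left and those of $\beta$ to the right of the column's decreasing sequence, so the marker $m$ records the split position. The condition $\alpha m\beta\in\Av(r_d,2134,P)$ is precisely the statement, proved in the surjectivity step, that a split is legal exactly where a new maximum could be inserted without creating a forbidden pattern. In the example above, $31542$ corresponds to $z$-weight $21$ and $31254$ to $z$-weight $12$. This mechanism also answers the question you flagged but could not resolve --- why $1$ is excluded from the $z$-weights: a $z$-weight of size $k$ encodes a cell with $k-1$ points, so weight $1$ would mean an empty, hence inactive, cell. (It likewise explains the substitution of $\frac{B(x)-(1+x)}{x}$ for the $z$-variable in Corollary~\ref{cor:rd_2134}.) Your avoidance analysis --- using that $\pi$ avoids \mptwodec{}, that $\pi\not\in\perms\oplus(\Avp(12)\backslash\{1\})$, and mesh-pattern reductions for $r_d$ and $2134$ --- is broadly the right shape for showing that $f(I)$ lies in the class, and the paper carries out essentially those steps, but they must be rebuilt around $f$ rather than $\SE$; as written, the map you propose to verify them for does not exist as a bijection.
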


\begin{proof}
    Let $I$ be a weighted independent set in $\WIs_n$.
    Let $E$ be the staircase encoding such that cell $v$ contains the
    corresponding permutation, except when $\phi(v,I)=z$.
    In this case, we write the weight as $\alpha m \beta$ where $m$ is the
    maximum, and in this cell of $E$, we add $\alpha\beta$.

    Define $f$ to be a map which maps $I$ to the
    permutation $f(I)$ with staircase encoding $E$ such that
    \begin{itemize}
        \item the rows of $f(I)$ are increasing,
        \item excluding points in the leading diagonal, the columns are
              decreasing,
        \item in an active cell $v$ labelled $z$, with weight $\alpha m \beta$,
              $\alpha$ is to the left and $\beta$ is to the right of the points in
              the column.
    \end{itemize}
    Figure~\ref{fig:example-bij-rd-2134} shows the map $f$ applied to an
    independent set.
    \begin{figure}[htpb]
        \centering
        \includestandalone{figures/example_bij_rd_2134}
        \caption{The map $f$ from $\WIs_n$ to $\perms$.}%
        \label{fig:example-bij-rd-2134}
    \end{figure}

    We will show that $f$ is the bijection desired.
    Assume that $\sigma = f(I)$ contains $1 \oplus \pi$ for some $\pi \in P$.
    If $\sigma$ contains $1 \oplus \pi$, then it contains an occurrence where
    the $1$ in the occurrence is a left-to-right minimum in $\sigma$. Therefore,
    $\sigma$ contains an occurrence $\pi$ in a rectangular region of the
    staircase grid. This region is pictured in
    Figure~\ref{fig:rd-2134-avoid-pi}~(a) with the cell in the leading diagonal
    in blue.

    \begin{figure}[htpb]
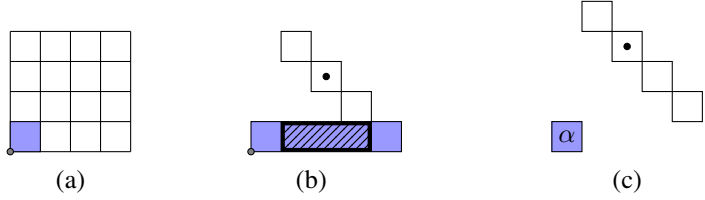

        \centering
        \includestandalone{figures/decomp_rd_2134}
        \caption{Decomposition of the rectangular region containing the
            occurrences of $\pi$.}%
        \label{fig:rd-2134-avoid-pi}
    \end{figure}

    In this region, without loss of generality we can assume the cell in the
    lower left corner is in the leading diagonal.
    The permutations that fill the active cells avoid $P$, therefore, the
    occurrence has points in at least two cells.
    In this region, consider the leftmost
    cell not in the leading diagonal containing a point of
    the occurrence of $\pi$. If this cell is in the first column, then by the
    definition of $I$ the columns to the right are not active in $f(I)$.
    The column consists of a decreasing permutation in the cells
    not in the leading diagonal (see Figure~\ref{fig:rd-2134-avoid-pi}~(b)).
    As $\pi$ avoids \mptwodec, the occurrence
    can use exactly one point in these cells, say $k$.
    Therefore, an occurrence of $\pi$ is of the form $\alpha k \beta$, where the
    $\alpha$ and $\beta$ are in the cell in leading diagonal, which contradicts
    the definition of $f$.

    Otherwise, the leftmost cell not in the leading diagonal is not in the
    first column. By the definition of $I$, the columns to the right and
    left are empty if they are not in the diagonal. Moreover, this column is
    decreasing as shown in Figure~\ref{fig:rd-2134-avoid-pi}~(c).
    Therefore, since the blue cell avoids $\bstrip{\pi}$,
    it follows that $\pi \in \perms \oplus (\Avp(12)\backslash\{1\})$,
    contradicting the second condition of $P$. Hence, we have shown that
    $f(I)$ avoids $1 \oplus \pi$.

    If $\sigma$ contains an occurrence of $2134$ then it either contains an
    occurrence of $1\oplus 2134$ or an occurrence of $(2134, \{(0,0)\})$.
    By the Shading Lemma, the latter implies an occurrence of $m_1$,
    see Figure~\ref{fig:mesh-2134-rd}.
    If $m_1$ occurs then the $2$ and the $1$ of
    the occurrence are left-to-right minima of the permutation. The $3$ and $4$
    of the occurrence violate either a decreasing cell constraint if they are
    in the same cell or an up-edge or decreasing column constraint
    if they are not.
    Hence, if $\sigma$ contains an occurrence of $2134$ it also contains
    an occurrence of $1\oplus 2134$. As $2134$ satisfies the conditions of $P$
    this implies $\sigma$ avoids $2134$.

    \begin{figure}[htpb]
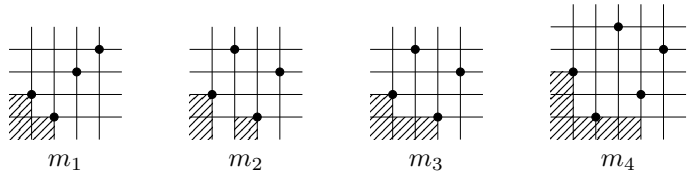

        \centering
        \includestandalone{figures/mp_2134_rd}
        \caption{Mesh patterns that do not occur in $\sigma$.}%
        \label{fig:mesh-2134-rd}
    \end{figure}

    If $\sigma$ contains an occurrence of $r_d$ then by a similar argument as
    above, it either contains an occurrence of $1\oplus r_d$ or $m_2$.
    Moreover, if
    $\sigma$ contains an occurrence of $m_2$, we can find an occurrence of
    $m_3$ or $m_4$. An occurrence of $m_3$ would violate the row increasing
    constraint or a down-edge constraint. The $5$ and the $2$ in an
    occurrence of $m_4$
    are in the same column of the staircase grid
    as they cannot have any left-to-right minima between them.
    The $5$ and $4$ in an occurrence of $m_4$ are also in the same column,
    otherwise they violate a down-edge or a row-edge constraint.
    Hence, since the $2$ in an occurrence is in a different
    row than the $5$ and $4$, we have a violation of the way we
    build the column. Therefore, $m_4$ does not occur in $\sigma$ and $\sigma$
    avoids $m_2$. We conclude that if $\sigma$ contains $r_d$ then it contains
    $1\oplus r_d$. As $r_d$ satisfies the conditions of $P$
    this implies $\sigma$ avoids $r_d$.

    The injectivity of $f$ follows from the uniqueness of the map.
    For surjectivity, we consider a permutation $\sigma$ in
    $\Avn(r_d, 2134, 1\oplus P)$.
    The rows of $\sigma$ are increasing by
    Lemma~\ref{lem:row_col_interleaving}.
    By Lemma~\ref{lem:const_2134}, the cells
    labelled $y$ in $\sigma$ contain decreasing permutations. Cells
    labelled $s$ avoid $213$ and $\bstrip{P}$ since there is a guaranteed
    point of the permutation to the north east of the points in those cells.
    The cells labelled $z$ and $t$ avoid $r_d$, $2134$ and $P$.
    Moreover, the active cells of $\sigma$ form an independent set of
    $\D(B_n)\mergecore \UR(B_{n-1})$ by Lemma~\ref{lem:const_2134}
    and~\ref{lem:const_2134_rd}.

    We study how cells in the same column interact.
    We consider a column of the staircase grid.
    By Lemma~\ref{lem:const_2134}, except for the bottommost cell, the
    column is decreasing and each cell contains a decreasing sequence.
    If there is
    a point in the bottom cell with index between two points of the decreasing
    sequence, then it creates an occurrence of $r_d$.
    Hence, the bottommost cell can only
    have points on both sides of the decreasing sequences.
    Figure~\ref{fig:interleaving-column} shows a typical column.
    In the bottommost
    cell, any point in the gray region would create a $r_d$ pattern.
    Moreover, the content of this cell cannot create one of the forbidden
    patterns with one of the points above. Hence, it can only split in a place
    where a new maximum could be added without creating a pattern in
    $\{r_d, 2134\}\cup P$. The content of this cell comes from a permutation
    $\alpha m\beta \in\Av(r_d, 2134, P)$ where $m$ is the maximum, $\alpha$ is
    placed on the left of the decreasing sequence and $\beta$ on the right.

    \begin{figure}[htpb]
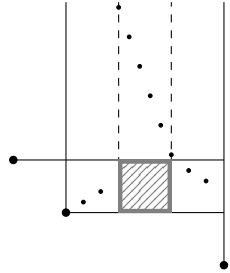

        \centering
        \includestandalone{figures/interleaving_column}
        \caption{A typical column for a permutation avoiding $2143$ and $r_d$.}%
        \label{fig:interleaving-column}
    \end{figure}

    This shows that $\sigma$ can be obtained from an element of $\WIs_n$ by
    applying $f$.
\end{proof}

To compute the generating function of $\Av(2134, r_d, 1\oplus P)$,
we need to compute the generating function for the independent set of
$\D(B_n)\mergecore\UR(B_{n-1})$ for $n\in\mathbb{N}$.
For these sets we track the number of vertices
with each label by a different variable.

\begin{proposition}\label{prop:gfDmUR}
    Let $\gfDmUR(x,y,z,s,t)$ be the generating function of independent sets of
    $\D(B_n)\mergecore\UR(B_{n-1})$ such that the variable $y,z,s,t$ track
    the number of
    vertices with labels $y,z,s,t$ in the set.
    Then $\gfDmUR(x,y,z,s,t)$ satisfies
    \[ \gfDmUR(x,y,z,s,t) = 1 + x(1+t)\gfDmUR(x,y,z,s,t) +
        \frac{x^2y(s+1)(z+1)}{1-x(s+1)(y+1)} \gfDmUR(x,y,z,s,t). \]
\end{proposition}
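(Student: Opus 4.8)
The plan is to mimic the recursive decompositions used for Propositions~\ref{prop:gf_UDR} and~\ref{prop:gf_UD}: I would peel off the topmost row (the cells $(1,j)$) and exhibit what remains as a strictly smaller copy of $\D(B_m)\mergecore\UR(B_{m-1})$, so that the generating function satisfies an equation of the shape $\gfDmUR = 1 + (\text{first block})\cdot \gfDmUR$. The constant $1$ records the empty grid $B_0$, and the work is to compute the weight of the first block, split according to whether the topmost row carries an off-diagonal vertex of the independent set.

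First I would treat the case in which the topmost row carries no off-diagonal vertex. The only cell available there is the diagonal cell $(1,1)$, and a short check against the down-core definition shows that $(1,1)$ is isolated in $\D(B_n)\mergecore\UR(B_{n-1})$: any down-edge from it would need a rectangle reaching below the diagonal, and diagonal cells carry no up- or row-edges. Hence $(1,1)$ may be taken or left out freely, and when taken it has no cell in its column and nothing to its north east, so $\phi$ assigns it the label $t$. Deleting this row leaves rows $2,\dots,n$, which after reindexing form $\D(B_{n-1})\mergecore\UR(B_{n-2})$, and the removed cell is neither in the column of, nor north east of, any surviving cell, so the $\phi$-labels of the remaining cells are unchanged. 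This case therefore contributes exactly $x(1+t)\gfDmUR$.

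The substantial case is when the topmost row carries an off-diagonal vertex; the row-edges of $\R(B_{n-1})$ force it to be unique, and I call it the \emph{corner} $(1,c)$. As in the proof of Proposition~\ref{prop:gf_UD}, the down-, up- and row-edges incident to the corner forbid a block of cells, and the cells that are completely disconnected from the resulting hook form the sub-staircase on rows and columns exceeding $c$, on which the same core reappears and contributes the trailing factor $\gfDmUR$. What remains is to enumerate the independent sets supported on the hook while tracking the four labels. Here I would argue that the corner is off-diagonal and hence labelled $y$ (and forced present), that the diagonal cell $(c,c)$ at the foot of the corner's column is labelled $z$ whenever it is occupied (the corner then lies in its column), that the remaining diagonal cells $(i,i)$ along the leg have the corner to their north east and are labelled $s$ when occupied, and that their off-diagonal partners $(i,c)$ in the corner's column are labelled $y$; checking that all these cells are pairwise non-adjacent in $\UR$ (same-column off-diagonal cells share no edge) but disconnected from the remainder is exactly the edge-bookkeeping that makes the decomposition valid. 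Reading this off, the two extreme levels $(1,1)$ and $(c,c)$ together with the corner give a contribution $x^2 y(s+1)(z+1)$, while each intermediate level contributes one diagonal cell of weight $x(s+1)$ and one off-diagonal partner of weight $(y+1)$, so summing over leg lengths produces the geometric series $\sum_{i\ge 0}\bigl(x(s+1)(y+1)\bigr)^i = \frac{1}{1-x(s+1)(y+1)}$, and the hook factor is $\frac{x^2y(s+1)(z+1)}{1-x(s+1)(y+1)}$.

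The delicate point, and where I expect the real effort to lie, is precisely these label assignments: because $\phi$ depends \emph{globally} on the whole independent set through the existence of a same-column vertex and of a north-east vertex, I must show that within the hook these conditions are forced by the position of the corner, so that each cell contributes a clean factor $(1+t)$, $(s+1)$, $(z+1)$, $(y+1)$ or $y$, and that no such interaction crosses the boundary into the recursively handled remainder in either direction. Once this is established, assembling the two cases yields $\gfDmUR = 1 + x(1+t)\gfDmUR + \frac{x^2y(s+1)(z+1)}{1-x(s+1)(y+1)}\gfDmUR$, which is the claimed functional equation.
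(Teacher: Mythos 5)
Your proposal is correct and takes essentially the same approach as the paper: the paper's proof is exactly this topmost-row decomposition (diagonal cells are isolated, the row-edges of $\R(B_{n-1})$ force at most one off-diagonal vertex in the top row, and the corner cuts off a hook plus a disconnected smaller copy of the core), with the hook bookkeeping compressed into a reference to its decomposition figure. Your label assignments --- intermediate diagonal cells and $(1,1)$ getting $(s+1)$, the foot $(c,c)$ getting $(z+1)$, column cells getting $(y+1)$ --- match the paper's figure, including the needed reading of ``north east'' under which the corner $(1,c)$ counts as north east of $(1,1)$, and they reproduce the factor $\frac{x^2y(s+1)(z+1)}{1-x(s+1)(y+1)}$ exactly.
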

\begin{proof}
    We observe that the vertices in the leading diagonal of
    $\D(B_n)\mergecore\UR(B_{n-1})$ are disconnected from the graph. Hence, they can
    be freely added or removed from any independent set.

    Because of the row constraint on $B_{n-1}$, the topmost row can contain at
    most one vertex that is not in the leading diagonal.
    First, if the independent set does not contain such a vertex then it
    contributes
    $x(1+t)\gfDmUR(x,y,z,s,t)$
    to $\gfDmUR$.

    Otherwise, the graph decomposes as shown on
    Figure~\ref{fig:decomp_mergecore} and we get a contribution of
    \[ \frac{x^2y(s+1)(z+1)}{1-x(s+1)(y+1)} \gfDmUR(x,y,z,s,t). \]

    \begin{figure}[htpb]
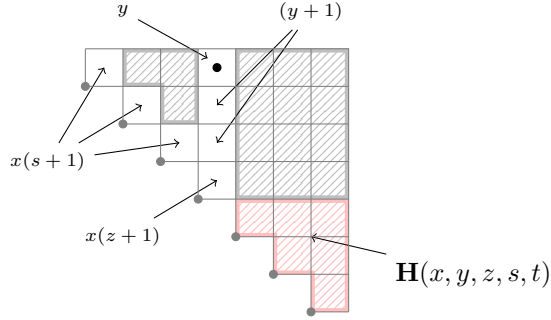

        \centering
        \includestandalone{figures/decomp_DmUR}
        \caption{
            The decomposition of an independent set of
            $\D(B_n)\mergecore\UR(B_{n-1})$ when a cell in
            the topmost row is active.
        }%
        \label{fig:decomp_mergecore}
    \end{figure}

    Hence, $\gfDmUR(x,y,z,s,t)$ satisfies
    \[ \gfDmUR(x,y,z,s,t) = 1 + x(1+t)\gfDmUR(x,y,z,s,t) +
        \frac{x^2y(s+1)(z+1)}{1-x(s+1)(y+1)} \gfDmUR(x,y,z,s,t)\]
    as claimed in the proposition.
\end{proof}

From Theorem~\ref{thm:inf_rd_2134} and Proposition~\ref{prop:gfDmUR}, we get the
enumeration of $\Av(r_d, 2134, 1\oplus P)$.
\begin{corollary}\label{cor:rd_2134}
    The generating function of $\Av(2134, 2413, 1\oplus P)$ is
    \[ \gfDmUR\left(x, \frac{x}{1-x}, \frac{B(x)-(1+x)}{x}, C(x)-1,
        B(x)-1\right)\]
    where
    \begin{itemize}
        \item $B(x)$ is the generating function of $\Av(2134, 2413, P)$,
        \item $C(x)$ is the generating function of $\Av(213, \bstrip{P})$.
    \end{itemize}
\end{corollary}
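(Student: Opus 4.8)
The plan is to obtain the generating function in the same way as Corollaries~\ref{cor:gf_upcore}, \ref{cor:gf_rdcdpi} and~\ref{cor:gf_rdcu}, by turning the bijection of Theorem~\ref{thm:inf_rd_2134} into a substitution into the independent-set generating function of Proposition~\ref{prop:gfDmUR}. Noting that $r_d = 2413$, Theorem~\ref{thm:inf_rd_2134} provides for each $n$ a size-preserving bijection between $\Avn(2134, 2413, 1\oplus P)$ and $\WIs_n$; taking the disjoint union over $n \geq 0$ yields a size-preserving bijection between $\Av(2134, 2413, 1\oplus P)$ and $\bigsqcup_n \WIs_n$. Since $\gfDmUR(x,y,z,s,t)$ enumerates the underlying independent sets of $\D(B_n)\mergecore\UR(B_{n-1})$ with $y, z, s, t$ counting the vertices of each label, the generating function of the class is obtained by replacing each label variable with the generating function recording the size that a cell of that label contributes to the permutation.

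Three of the four substitutions are routine, since for the labels $y$, $s$ and $t$ the map $f$ stores the weight in the cell unchanged, so such a cell contributes exactly the size of its weight. The off-diagonal cells (label $y$) carry weights from $\Avp(12)$, with generating function $\frac{x}{1-x}$; the diagonal cells having a vertex of the set to their north-east (label $s$) carry weights from $\Avp(213, \bstrip{P})$, contributing $C(x) - 1$; and the remaining diagonal cells (label $t$) carry weights from $\Avp(r_d, 2134, P)$, contributing $B(x) - 1$. In each case I would simply substitute the ordinary generating function of the corresponding non-empty weight class, exactly as in the earlier corollaries.

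The remaining substitution, for the label $z$, is the delicate one and is where I expect the main difficulty to lie. Its weight class is $\Avp(r_d, 2134, P)\setminus\{1\}$, with generating function $B(x) - (1+x)$, but $f$ does not insert such a weight verbatim: writing it as $\alpha m \beta$ with $m$ its maximum, only $\alpha\beta$ is placed in the cell, while $m$ is relocated into the column above and hence occupies no fresh grid cell. The task is to account for this stripped maximum exactly once in the size. I would argue that, relative to the points already recorded by the other variables and by the one point per diagonal cell carried by $x$, a $z$-cell whose weight has size $k$ contributes $k-1$ rather than $k$ further points, which forces the substitution $z \mapsto \frac{B(x)-(1+x)}{x}$, the factor $x^{-1}$ absorbing the displaced maximum. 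Concretely, the verification amounts to checking that the grouping $x(z+1)$ appearing in the recursion of Proposition~\ref{prop:gfDmUR} collapses to $B(x)-1$, so that a $z$-position diagonal cell together with its stripped weight is counted by the same series $B(x)-1$, namely the generating function of $\Avp(r_d,2134,P)$, as an undisturbed cell; this is what makes the $x$-exponent of every monomial equal to the size of the permutation it encodes.

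Finally I would substitute all four expressions into $\gfDmUR$, obtaining
\[ \gfDmUR\!\left(x, \frac{x}{1-x}, \frac{B(x)-(1+x)}{x}, C(x)-1, B(x)-1\right), \]
which I claim is the generating function of $\Av(2134, 2413, 1\oplus P)$; as a final consistency check I would feed these four arguments into the functional equation of Proposition~\ref{prop:gfDmUR} and confirm that the resulting single algebraic equation tracks the permutation size correctly.
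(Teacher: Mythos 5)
Your proposal is correct and follows essentially the same route as the paper, which derives the corollary directly from the bijection of Theorem~\ref{thm:inf_rd_2134} and Proposition~\ref{prop:gfDmUR} by substituting the weight-class generating functions for the label variables (the paper, like Section~\ref{sec:gen_inflation}, leaves these substitutions implicit). You correctly isolate and resolve the only non-routine point, namely that a $z$-labelled cell's weight $\alpha m \beta$ contributes one fewer point than its size since the maximum $m$ is only a bookkeeping marker for the split position, forcing $z \mapsto \frac{B(x)-(1+x)}{x}$, so that $x(z+1)$ collapses to $B(x)-1$ as you verify.
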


Using the corollary above, we can compute $A(x)$, the generating function of
$\Av(2134, 2413)$ that was first enumerated by \cite{MR3211768}.
In this example, $P$ is empty. Hence, $B(x)=A(x)$ and $C(x)$
is the generating function for the Catalan numbers. We get that the generating
function $A(x)$ satisfies
\[
    A(x) = \gfDmUR\left(x, \frac{x}{1-x}, \frac{A(x)-(1+x)}{x}, C(x)-1,
    A(x)-1\right).
\]
This equation can be solved explicitly to find the enumeration that appears in
OEIS as \oeis{A165538}.

\section{Avoiding \texorpdfstring{$r_u$}{ru} and
  \texorpdfstring{$2143$}{2143}}\label{sec:ru_2143}

Using similar techniques as in the previous section we enumerate classes of the
form $\Av(r_u, 2143, 1\oplus P)$ where each pattern $\pi$ in $P$ satisfies
\begin{itemize}
    \item $\pi$ avoids \mptwoinc, and
    \item $\pi \not\in \perms \ominus \Avp(21)$.
\end{itemize}
For the entire section, we let $P$ be such a set.

We look at the pattern $2143$ in Figure~\ref{fig:2143} as we did for $2134$.
If we consider the two black points as left-to-right minima of a permutation,
then the two red points will enforce a down-core structure on the staircase
grid except for the leading diagonal.
\begin{figure}[htpb]
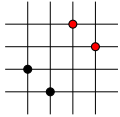

    \centering
    \includestandalone{figures/2143}
    \caption{The pattern 2143.}%
    \label{fig:2143}
\end{figure}
The following lemmas follow from similar arguments as in
Lemmas~\ref{lem:row_col_interleaving},~\ref{lem:up_down_edge_const}
and~\ref{lem:col_row_edges_const}.
\begin{lemma}\label{lem:const_2143}
    Let $\sigma$ be a permutation in $\Avn(2143)$.
    We consider the set $C$ of active cells of the staircase encoding of $\sigma$
    that are not in the main diagonal of the grid. Then
    \begin{itemize}
        \item $C$ is an independent set of $D(B_{n-1})$,
        \item cells in $C$ contain increasing permutations.
    \end{itemize}
    Moreover, if we remove the cells of the leading diagonal from $\sigma$,
    the rows and columns are increasing.
\end{lemma}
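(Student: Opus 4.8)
The plan is to prove the contrapositive of each assertion, mirroring the arguments of Lemmas~\ref{lem:row_col_interleaving},~\ref{lem:up_down_edge_const} and~\ref{lem:col_row_edges_const} (it is the increasing mirror of Lemma~\ref{lem:const_2134}), with one extra ingredient that accounts for the leading diagonal. The engine of every case is the following observation. Fix $\sigma\in\Avn(2143)$ and an off-diagonal cell $(i,j)$ of its staircase encoding, so $i<j$. Every point lying in $(i,j)$ has value above the $i$-th left-to-right minimum and index to the right of the $j$-th left-to-right minimum; consequently the $i$-th through $j$-th left-to-right minima all lie below and to the left of every point of $(i,j)$. Since $i<j$ there are at least two such minima, and, being left-to-right minima, they are decreasing in value, so any two of them form an occurrence of $21$. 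Thus below and to the left of any off-diagonal cell one can always find a $21$ made of two minima. Excluding the diagonal is essential here: a diagonal cell $(i,i)$ has only the single $i$-th minimum below and to its left, so the engine fails there, which is precisely why the relevant graph lives on $B_{n-1}$, as set up around Figure~\ref{fig:smaller-indep-set}.

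First I would prove that off-diagonal cells carry increasing permutations. If some off-diagonal cell contained a $21$, its two points (a descent lying high and to the right) together with two of the minima below and to the left of the cell (a second descent lying low and to the left) would form an occurrence of $2143$, contradicting $\sigma\in\Av(2143)$; hence each off-diagonal cell avoids $21$. The statement that the rows and columns are increasing once the diagonal is removed is handled identically: if two cells in a common row (resp.\ column), neither on the diagonal, were arranged so as to make the row (resp.\ column) decreasing, then one point in each cell would form a descent, and again two of the common lower-left minima would complete a $2143$. This is the verbatim analogue of the proof of Lemma~\ref{lem:row_col_interleaving}, with the single minimum used there replaced by a pair of minima.

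For the independence statement I would argue the contrapositive. Suppose two off-diagonal active cells $(i,j)$ and $(k,\ell)$ are joined by an edge of $\D(B_{n-1})$ under the overlay $(x,y)\mapsto(x,y-1)$. Unpacking the definition of the down-core on $B_{n-1}$, this means $i<k$, $j<\ell$ and $k\le j-1$, the last inequality being exactly the condition that the defining rectangle sits inside $B_{n-1}$. A point chosen in each of the two cells forms a descent, since the cell with the smaller indices sits higher and further to the left, and the condition $k\le j-1$ guarantees that the $k$-th through $j$-th minima, of which there are at least two, lie below and to the left of both chosen points. These two minima, followed by the two cell points, form an occurrence of $2143$. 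Hence two off-diagonal cells joined in $\D(B_{n-1})$ cannot both be active, which is the desired independence.

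The step I expect to be the only delicate one is this bookkeeping at the diagonal boundary: one must verify that the down-core edge condition on the shifted grid $B_{n-1}$ (equivalently $k\le j-1$) is \emph{exactly} the condition under which two off-diagonal cells share at least two common lower-left minima, so that the independence statement is neither too strong nor too weak, and in particular that cells merely adjacent to the diagonal, which contribute only one common minimum, are correctly exempted by the shift. Once this correspondence is pinned down, all three conclusions reduce to exhibiting the ``descent above a $21$'' configuration described above, and the remaining details are routine and identical in spirit to the proofs already given for the size-three patterns.
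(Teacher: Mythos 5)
Your proof is correct and takes essentially the same approach the paper intends: the paper only proves this lemma by appeal to ``similar arguments'' as Lemmas~\ref{lem:row_col_interleaving}, \ref{lem:up_down_edge_const} and~\ref{lem:col_row_edges_const}, namely that two left-to-right minima supply the $21$ of $2143$ while a descent within or across off-diagonal cells supplies the $43$, which is exactly your engine. Your bookkeeping that the shifted down-core edge condition $k\le j-1$ on $B_{n-1}$ coincides with the existence of at least two common lower-left minima is precisely the reason the paper overlays $\D(B_{n-1})$ rather than $\D(B_n)$, so the proposal fills in the details of the paper's sketch rather than deviating from it.
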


\begin{lemma}\label{lem:const_2143_ru}
    Let $\sigma$ be a permutation in $\Avn(2143, r_u)$.
    The set of active cells of the staircase encoding of $\sigma$ is
    an independent set of $\R(B_{n-1})$.
\end{lemma}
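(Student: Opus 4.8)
The plan is to imitate the proof of Lemma~\ref{lem:col_row_edges_const}, exploiting that avoiding $2143$ and $r_u$ simultaneously forces the off-diagonal part of every row to be both increasing and decreasing, hence to contain at most one active cell. First I would unwind the abuse of notation for independent sets of graphs on $B_{n-1}$ (see Figure~\ref{fig:smaller-indep-set}): the set $C$ of active cells of $\SE(\sigma)$ is an independent set of $\R(B_{n-1})$ precisely when the shifted set $\{(x,y-1):(x,y)\in C,\ x\neq y\}$ contains no two cells sharing a first coordinate. Since the shift $(x,y)\mapsto(x,y-1)$ preserves first coordinates and sends every off-diagonal cell of $B_n$ into $B_{n-1}$, this is equivalent to requiring that no row of the staircase grid contain two distinct off-diagonal active cells.

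I would then establish this last statement by contradiction at the level of a single row. Suppose two off-diagonal cells $(i,j)$ and $(i,k)$ with $i<j<k$ are both active. Because $\sigma\in\Avn(2143)$, Lemma~\ref{lem:const_2143} tells us that after deleting the leading-diagonal cells the rows of $\sigma$ are increasing, so every point lying in $(i,j)$ is lower in value than every point lying in $(i,k)$. On the other hand $\sigma\in\Av(r_u)$, so by Lemma~\ref{lem:row_col_interleaving}(\ref{sublem:row_dec}) the rows of $\sigma$ are decreasing, forcing every point of $(i,j)$ to be higher in value than every point of $(i,k)$. As both cells contain at least one point, these two relations contradict each other. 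Hence each row contains at most one off-diagonal active cell, which is exactly the assertion that $C$ is an independent set of $\R(B_{n-1})$.

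The argument is essentially routine once Lemma~\ref{lem:const_2143} is available, and I expect no genuine obstacle — only a bookkeeping subtlety. The one point requiring care is that the two directional row statements must be applied to the same pair of genuinely off-diagonal cells: Lemma~\ref{lem:const_2143} only controls rows after the diagonal cells have been stripped, whereas Lemma~\ref{lem:row_col_interleaving}(\ref{sublem:row_dec}) is phrased for the full permutation. Restricting attention to pairs $(i,j),(i,k)$ with $i<j<k$ reconciles this, since both statements then describe exactly these two cells, and the left-to-right minima and diagonal cells play no role because the independent-set condition for $\R(B_{n-1})$ ignores the diagonal after the shift.
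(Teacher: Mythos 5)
Your proposal is correct and is essentially the paper's own argument: the paper justifies this lemma only by remarking that it follows from arguments similar to Lemmas~\ref{lem:row_col_interleaving}, \ref{lem:up_down_edge_const} and~\ref{lem:col_row_edges_const}, and your proof---combining the decreasing rows forced by avoiding $r_u$ (Lemma~\ref{lem:row_col_interleaving}) with the increasing off-diagonal rows given by Lemma~\ref{lem:const_2143} to conclude that each row has at most one active off-diagonal cell---is exactly that argument made explicit. Your unwinding of the paper's abuse of notation for independent sets of $\R(B_{n-1})$ via the column shift is also handled correctly.
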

From the two previous lemmas and Lemma~\ref{lem:up_down_edge_const},
we know that for $\sigma$ in
$\Avn(r_u,2143)$ the active cells of $\SE(\sigma)$ are an independent set of
$\UU(B_n)\mergecore\DR(B_{n-1})$.
To describe the structure of the staircase encoding of the permutations in
$\Av(r_u, 2143, 1 \oplus P)$ we introduce new labelled weighted independent sets.
First, for a subset $I$ of the staircase grid and a vertex $v$ of $I$, we set
\begin{equation*}
    \psi(v,I) =
    \begin{cases}
        y & \text{if $v$ is not in the leading diagonal},       \\
        z & \text{if there is a $v' \in I$ in the same column}, \\
        s & \text{otherwise}.
    \end{cases}
\end{equation*}
Figure~\ref{fig:indep-UmDR} shows of a independent set labelled with $\psi$.
\begin{figure}[htpb]
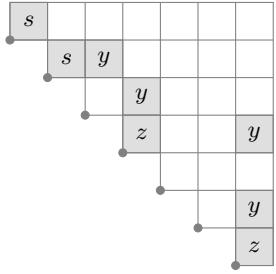

    \centering
    \includestandalone{figures/complex_indep_set2}
    \caption{The labelling with $\psi$ of an independent set.}%
    \label{fig:indep-UmDR}
\end{figure}

We define $\WI_\psi(G,Y,Z,S)$ as the set of
weighted independent sets of $G$ such that for a vertex $v$ in the independent set
$I$ the weight of $v$ is an element of
\begin{itemize}
    \item $Y$ if $\psi(v,I)=y$,
    \item $Z$ if $\psi(v,I)=z$, and
    \item $S$ if $\psi(v,I)=s$.
\end{itemize}

\begin{theorem}\label{thm:inf_ru_2143}
    There is a bijection between $\Avn(r_u, 2143, 1 \oplus P)$ and
    \[
        \WI_\psi(\UU(B_n)\mergecore\DR(B_{n-1}), \Avp(21),
        \Avp(r_u,2143,P)\backslash\{1\}, \Avp(r_u, 2143,P)).
    \]
\end{theorem}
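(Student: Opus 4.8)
The plan is to mirror the proof of Theorem~\ref{thm:inf_rd_2134}, systematically exchanging every ``decreasing/up'' ingredient for its ``increasing/down'' dual. I would first define a map $g$ sending a weighted labelled independent set $I$ of the statement to a permutation. Given $I$, build the staircase encoding $E$ in which each cell carries its weight, except that a cell $v$ with $\psi(v,I)=z$, whose weight is written $\alpha\,m\,\beta$ with $m$ an appropriate extreme value, contributes only $\alpha\beta$ to its cell of $E$. Then let $g(I)$ be the permutation with staircase encoding $E$ whose rows are decreasing (Lemma~\ref{lem:row_col_interleaving}) and whose off-diagonal columns are increasing (Lemma~\ref{lem:const_2143}), and in which the stripped value $m$ of each $z$-cell is interleaved into its column so that $\alpha$ lies on one side and $\beta$ on the other of the increasing run in that column. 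Injectivity of $g$ is then immediate from the uniqueness of this recipe, so the whole task reduces to showing that the image of $g$ is exactly $\Avn(r_u,2143,1\oplus P)$.

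Next I would show that $g(I)$ avoids $1\oplus\pi$ for every $\pi\in P$. Supposing an occurrence, one may take the $1$ to be a left-to-right minimum, so that $\pi$ occurs in the rectangular region north-east of that minimum. I would analyse this region through the leftmost off-diagonal cell meeting the occurrence: independence in $\UU(B_n)\mergecore\DR(B_{n-1})$ empties the neighbouring off-diagonal columns, the relevant column is increasing, and the hypothesis that $\pi$ avoids \mptwoinc{} forces the occurrence to use at most one point from the increasing cells of that column. The surviving configuration then either contradicts the diagonal cell's avoidance of $\pi$ together with the splitting rule built into $g$, or forces a skew-decomposition $\pi=\tau\ominus\rho$ with $\rho$ increasing, i.e.\ $\pi\in\perms\ominus\Avp(21)$, contradicting the second standing assumption on $P$. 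This is precisely where the two conditions on $P$ do their work, playing the roles that ``avoids \mptwodec{}'' and ``$\pi\notin\perms\oplus(\Avp(12)\setminus\{1\})$'' played in the previous section.

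Then I would rule out $2143$ and $r_u$ directly. Any occurrence of $2143$ is either an occurrence of $1\oplus 2143$ or of $(2143,\{(0,0)\})$; by the Shading Lemma the latter upgrades to a mesh pattern with the bottom-left and adjacent cells shaded, whose four points are forced into configurations that violate an up-edge, a down- or row-edge, or the increasing-column rule. Hence only $1\oplus 2143$ can survive, and since $2143$ itself satisfies both conditions on $P$, the previous paragraph excludes it. The argument for $r_u$ is the same in spirit: a chain of shading steps reduces any occurrence to $1\oplus r_u$ while every intermediate mesh pattern is killed by an edge or monotonicity contradiction, after which the $P$-conditions (which $r_u$ also meets) finish the job.

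Finally, for surjectivity I would start from $\sigma\in\Avn(r_u,2143,1\oplus P)$ and use Lemmas~\ref{lem:up_down_edge_const},~\ref{lem:const_2143} and~\ref{lem:const_2143_ru} to see that its active cells form an independent set of $\UU(B_n)\mergecore\DR(B_{n-1})$, with decreasing rows, increasing off-diagonal columns, and at most one off-diagonal cell per row; the labels are then read off by $\psi$, the non-diagonal cells avoid $21$, the diagonal cells avoid $r_u,2143,P$, and the shared-column diagonal cells cannot be a single point. The crux, and the step I expect to be hardest, is the column-interleaving analysis for the $z$-cells: I must prove that in a column whose upper part is an increasing run, the content of the diagonal cell is \emph{forced} to split as $\alpha\,m\,\beta$ about a single extreme value, with $\alpha$ and $\beta$ landing on opposite sides of that run, and that no other splitting avoids $\{r_u,2143\}\cup P$. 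Showing this decomposition is simultaneously forced and pattern-free, so that $\sigma=g(I)$ for a unique $I$, is the heart of the proof and is exactly where the hypotheses ``$\pi$ avoids \mptwoinc{}'' and ``$\pi\notin\perms\ominus\Avp(21)$'' become indispensable.
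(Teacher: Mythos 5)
Your proposal is correct and follows essentially the same route as the paper's proof, which is itself the mirror of the proof of Theorem~\ref{thm:inf_rd_2134}: the same map, the same case analysis for occurrences of $1\oplus\pi$ using the two hypotheses on $P$, the same Shading-Lemma elimination of $2143$ and $r_u$ via auxiliary mesh patterns, and the same surjectivity argument ending with the column-splitting analysis. One small correction of emphasis: the hypotheses on $P$ are indispensable only in the well-definedness direction (showing the image avoids $1\oplus\pi$), whereas the forced splitting $\alpha m \beta$ in the surjectivity step needs only that $\sigma$ avoids $r_u$, $2143$ and $1\oplus P$, since a point of the diagonal cell interleaved into the column's increasing run would create an occurrence of $r_u$ together with a left-to-right minimum.
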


\begin{proof}
    For conciseness, we denote with $\WIs_n$ the set of weighted independent
    sets stated in the theorem.
    Let $I$ be a weighted independent set in $\WIs_n$.

    Let $E$ be the staircase encoding such that a cell $v$ contains the same
    permutation as in $I$, except when $\phi(v,I)=z$.
    In this case, we write the weight as $\alpha m \beta$ where $m$ is the
    maximum, and add $\alpha\beta$ to cell $v$.

    Define $f$ to be a map which maps $I$ to the
    permutation $f(I)$ with staircase encoding $E$ such that
    \begin{itemize}
        \item the rows of $f(I)$ are decreasing,
        \item excluding points in the leading diagonal, the columns are increasing,
        \item in an active cell $v$ labelled $z$, with weight $\alpha m \beta$,
              $\alpha$ is to the left and $\beta$ is to the right of the points in
              the column.
    \end{itemize}
    Figure~\ref{fig:example-bij-ru-2143} shows the map $f$ applied to an
    independent set.
    \begin{figure}[htpb]
        \centering
        \includestandalone{figures/example_bij_ru_2143}
        \caption{The map $f$ from $\WIs_n$ to $\perms$.}%
        \label{fig:example-bij-ru-2143}
    \end{figure}

    We show that $f$ is the desired bijection.
    Assume that $\sigma = f(I)$ contains $1 \oplus \pi$ for some $\pi \in P$.
    If $\sigma$ contains $1 \oplus \pi$, then it contains an occurrence where
    the $1$ in the occurrence is a left-to-right minimum in $\sigma$. Therefore,
    $\sigma$ contains an occurrence $\pi$ in a rectangular region of the
    staircase grid. This region is pictured in
    Figure~\ref{fig:ru-2143-avoid-pi}~(a) with the cell in the leading diagonal
    colored blue.

    \begin{figure}[htpb]
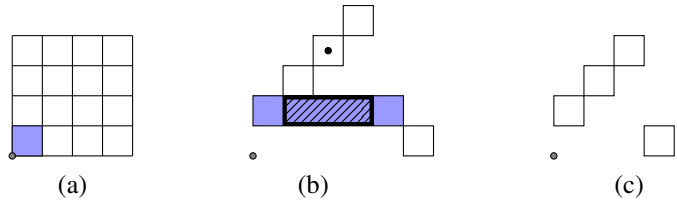

        \centering
        \includestandalone{figures/decomp_ru_2143}
        \caption{Decomposition of the rectangular region containing the
            occurrences of $\pi$.}%
        \label{fig:ru-2143-avoid-pi}
    \end{figure}

    In this region, without loss of generality, we can assume the cell in the
    lower left corner is in the leading diagonal.
    The permutations contained in each
    active cell avoid $P$, therefore, the
    occurrence has points in at least two cells.
    In this region, consider the leftmost
    cell not in the leading diagonal containing a point of
    the occurrence of $\pi$. If this cell is in the first column then, by the
    definition of $I$, only cells in the leftmost column and the bottommost row
    can be active. Moreover, the row-edges imply that only one cell in the
    bottom row that is not in the leading diagonal can be active.
    The column consists of an increasing permutation in the cells
    not in the leading diagonal and the rows are decreasing
    (see Figure~\ref{fig:ru-2143-avoid-pi}~(b)).
    As $\pi$ is not in $\perms\ominus \Avp(21)$, the bottommost cell in this
    figure is empty.
    Also, as $\pi$ avoids \mptwoinc, the occurrence
    can use exactly one point in the remaining white cells, say $k$.
    Therefore, the occurrence of $\pi$ is of the form $\alpha k \beta$, where
    the $\alpha$ and $\beta$ are in the cell in leading diagonal,
    which contradicts the definition of $f$.

    Otherwise, the leftmost cell not in the leading diagonal is not in the
    first column. By the definition of $I$, only the column of this cell and the
    bottommost row can be active.
    Moreover, this column is
    increasing and the bottom row is decreasing.
    Only one white cell of the bottom row can be active because of the
    row-edges.
    This is shown in Figure~\ref{fig:ru-2143-avoid-pi}~(c).
    Therefore, it follows, since $\pi \not\in \perms \ominus \Avp(21)$, that
    the bottommost cell is empty. Hence $\pi$ is an increasing
    permutation, which contradicts the first condition on $P$.
    Hence, we have shown that $f(I)$ avoids $1 \oplus \pi$.

    If $\sigma$ contains an occurrence of $2143$ then it either contains an
    occurrence of $1\oplus 2143$ or an occurrence of $(2143, \{(0,0)\})$.
    By the Shading Lemma, the latter implies an occurrence of $m_1$
    (see Figure~\ref{fig:mesh-2143-ru}).
    If $m_1$ occurs then the $2$ and the $1$ of
    the occurrence are left-to-right minima of the permutation. The $3$ and $4$
    of the occurrence violate either an increasing cell constraint if they are
    in the same cell or a down-edge constraint if they are not.
    Hence, if $\sigma$ contains an occurrence of $2143$ it also contains
    an occurrence of $1\oplus 2143$. As $2143$ satisfies the conditions of $P$
    this implies $\sigma$ avoids $2143$.

    \begin{figure}[htpb]
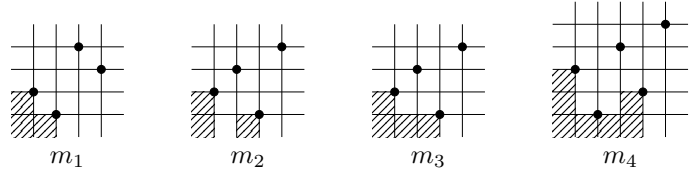

        \centering
        \includestandalone{figures/mp_2143_ru}
        \caption{Mesh patterns that do not occur in $\sigma$.}%
        \label{fig:mesh-2143-ru}
    \end{figure}

    If $\sigma$ contains an occurrence of $r_u$ then by a similar argument as
    above, it either contains an occurrence of $1\oplus r_u$ or $m_2$.
    Moreover, if
    $\sigma$ contains an occurrence of $m_2$, we can find an occurrence of
    $m_3$ or $m_4$. An occurrence of $m_3$ would violate the row decreasing
    constraints or the up-edges constraints. The $4$ and the $2$ in an
    occurrence of $m_4$ are in the same column of the staircase grid
    since they cannot have any left-to-right minima between them.
    The $5$ and $4$ in an occurrence of $m_4$ are also in the same column
    otherwise they violate an up-edge constraint.
    Hence, since the point at index $4$ is in a different
    row than the ones at index $3$ and $5$, we have a violation of the way we
    build the column. Therefore, $m_4$ does not occur in $\sigma$ and $\sigma$
    avoids $m_2$. We conclude that if $\sigma$  contains $r_u$ then it contains
    $1\oplus r_u$. As $r_u$ satisfies the conditions of $P$,
    this implies $\sigma$ avoids $r_u$.

    The injectivity of $f$ follows from the uniqueness of the map.
    For surjectivity, we consider a permutation $\sigma$ in
    $\Avn(r_u, 2143, 1\oplus P)$.
    The rows of $\sigma$ are decreasing by
    Lemma~\ref{lem:row_col_interleaving}.
    By Lemma~\ref{lem:const_2143}, the cells
    labelled $y$ in $\sigma$ contain increasing permutations.
    The cells labelled $z$ and $s$ avoid $r_u$, $2143$ and $P$.
    Moreover, the active cells of $\sigma$ form an independent set of
    $\UU(B_n)\mergecore \DR(B_{n-1})$ by Lemmas~\ref{lem:const_2143}
    and~\ref{lem:const_2143_ru}.

    Finally, we need to consider the points in a column of the staircase grid.
    By Lemma~\ref{lem:const_2143}, except for the bottommost cell, the
    column is increasing and each cell contains an increasing permutation.
    If there is
    a point in the bottom cell with index between two points of the increasing
    sequence, then it creates an occurrence of $r_u$.
    Hence, the bottommost cell can only
    have points on either side of the increasing sequence.
    Figure~\ref{fig:interleaving-column2} shows a typical column.
    \begin{figure}[htpb]
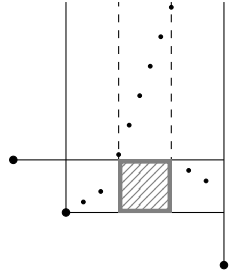

        \centering
        \includestandalone{figures/interleaving_column2}
        \caption{A typical column for a permutation avoiding $2143$ and $r_u$.}%
        \label{fig:interleaving-column2}
    \end{figure}
    In the bottommost
    cell, any point in the gray region would create an $r_u$ pattern.
    Moreover, the points in this cell together with another point in this column
    cannot create one of the forbidden patterns. Hence, it can only be split
    in a place where a new maximum could be added without creating a pattern in
    $\{r_u, 2143\}\cup P$. Therefore, the points in the cell are of the form
    $\alpha \beta$ where $\alpha$ is to the left of the other points in the
    column, $\beta$ is to the right of the other points in the column, and
    $\alpha m \beta \in\Av(r_u, 2143, P)$.

    Therefore we have shown that $\sigma$ can be obtained from an element of
    $\WIs_n$ by applying $f$.
\end{proof}

To compute the generating function of $\Av(r_u, 2143, 1\oplus P)$,
we need to compute the generating function for the independent set of
$\UU(B_n)\mergecore\DR(B_{n-1})$ for $n\in\mathbb{N}$.
For these sets we track the number of vertices
and their labels with different variables.

\begin{proposition}\label{prop:gfUmDR}
    Let $\gfUmDR(x,y,z,s)$ be the generating function of independent sets of
    $\D(B_n)\mergecore\UR(B_{n-1})$ such that the variables $y,z,s$
    track the number of vertices with label $y,z,s$ in the set.
    Then $\gfUmDR(x,y,z,s)$ satisfies
    \[ \gfUmDR(x,y,z,s) = 1 + x(s+1)\gfUmDR(x,y,z,s) +
        \frac{xy(z+1)(\gfUmDR(x,y,z,s)-1)}{1-x(y+1)}. \]
\end{proposition}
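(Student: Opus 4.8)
The plan is to derive the recurrence by peeling off the topmost row of the grid, in the same spirit as Propositions~\ref{prop:gf_UD} and~\ref{prop:gfDmUR}, while tracking how the labelling $\psi$ interacts with the merge. Write $J=\gfUmDR(x,y,z,s)$ for the generating function of labelled independent sets of $\UU(B_n)\mergecore\DR(B_{n-1})$ (the graph of Section~\ref{sec:ru_2143}, on which $\psi$ is defined). Two adjacency facts drive everything. First, the row-core edges of $\DR(B_{n-1})$ make the off-diagonal cells of any fixed row pairwise adjacent, so an independent set contains \emph{at most one} active off-diagonal cell in the top row. Second, the top-left diagonal cell $(1,1)$ is isolated: an up-edge incident to it would require a second cell of row-index $<1$ or column-index $<1$, and $\DR$-edges touch only off-diagonal cells. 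Since $(1,1)$ is also the unique cell of its column, it is always labelled $s$ when active, hence contributes a factor $(s+1)$. I split independent sets according to the unique possibility for the top row, which is exhaustive by the first fact.

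If the top row carries no active off-diagonal cell, then $(1,1)$ is free and the cells of rows $2,\dots,n$ induce a copy of $\UU(B_{n-1})\mergecore\DR(B_{n-2})$ with labels computed internally (no active cell of row $1$ lies in a column with a cell below it). This case yields the term $x(s+1)J$.

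The substantial case is a single active off-diagonal cell $(1,m)$ with $m\ge 2$. Here I would first pin down what it forbids: its up-edges kill the sub-staircase $L=\{(k,\ell):2\le k\le\ell\le m-1\}$ to its lower-left, and its down-edges kill the block $R=\{(k,\ell):2\le k\le m-1,\ m+1\le\ell\le n\}$ to its right, while the $m-2$ cells $(2,m),\dots,(m-1,m)$ directly below it in column $m$ are \emph{not} forbidden. One checks these column-$m$ cells are pairwise non-adjacent and adjacent to nothing else that can be active (the blocks $L,R$ are empty, $(1,1)$ is isolated, and no edge joins column $m$ in rows $<m$ to rows $\ge m$), so they form a free strip contributing $(y+1)^{m-2}$; together with the $m-2$ forced-empty diagonal cells $(2,2),\dots,(m-1,m-1)$ inside $L$ this gives $x^{m-2}(y+1)^{m-2}$, and $(1,m)$ itself contributes $y$. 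The cells of rows $m,\dots,n$ then induce a standalone merged core of size $n-m+1$, with one twist: its top diagonal cell $(m,m)$ now shares its column with the active $(1,m)$, so it must be labelled $z$ rather than $s$, and $(m,m)$ is the only cell whose $\psi$-label is altered.

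To absorb this twist I would introduce $\hat J$, the analogue of $J$ in which the top-left diagonal cell is labelled $z$ (not $s$) when active. Because that cell is isolated and alone in its column, for every $n\ge1$ one has $[x^n]J=(s+1)C_n$ and $[x^n]\hat J=(z+1)C_n$ with the \emph{same} series $C_n$ enumerating the independent sets of $B_n\setminus\{(1,1)\}$; hence $C_n=[x^n]J/(s+1)$ defines a genuine power series and $\hat J-1=\tfrac{z+1}{s+1}(J-1)$. The sub-staircase in the hard case is nonempty ($m\le n$), so it contributes $\hat J-1$. Collecting the factors for fixed $m$ gives $(s+1)\,y\,x^{m-1}(y+1)^{m-2}(\hat J-1)$; summing over $m\ge2$ turns $\sum_{m\ge2}x^{m-1}(y+1)^{m-2}$ into $\tfrac{x}{1-x(y+1)}$, and substituting the identity for $\hat J-1$ makes the $(s+1)$ cancel, leaving $\tfrac{xy(z+1)(J-1)}{1-x(y+1)}$. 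Adding the empty set and the two cases gives the claimed equation. The main obstacle is precisely the geometry of this hard case: verifying that $(1,m)$ forbids \emph{exactly} $L$ and $R$ while leaving column $m$ below it free and disconnected from the lower sub-staircase, and that only $(m,m)$ is relabelled. Once these adjacency checks are in place, the relabelling identity and its cancellation are short, and I would sanity-check the recurrence against the coefficients of $x$ and $x^2$ (the latter equalling $(1+s)\bigl[(1+s)+y(1+z)\bigr]$) before finalizing.
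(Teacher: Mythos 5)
Your proposal is correct and follows essentially the same route as the paper's proof: peel off the topmost row, use the row-edges to get at most one active off-diagonal cell there, let the isolated corner cell contribute $(s+1)$, sum the free column strip and forced-empty rows into the geometric factor $\tfrac{x}{1-x(y+1)}$, and handle the relabelled sub-core. Your auxiliary series $\hat J$ with $\hat J-1=\tfrac{z+1}{s+1}\left(\gfUmDR-1\right)$ is precisely the paper's inline factor $x(z+1)\tfrac{\gfUmDR(x,y,z,s)-1}{x(s+1)}$ (the smaller core with its single disconnected corner cell relabelled from $s$ to $z$), so the two arguments coincide, with yours supplying the adjacency verifications the paper leaves to its figure.
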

\begin{proof}
    We consider the top row of the graph. The leftmost cell is completely
    disconnected from the graph, hence, it can be freely added to the
    independent set.
    The row constraint on $B_{n-1}$ implies the topmost row contains at
    most one vertex that is not in the leading diagonal.
    First, if the independent set does not contain such a vertex then it
    contributes $x(s+1)\gfUmDR(x,y,z,s)$ to $\gfUmDR$.

    \begin{figure}[htpb]
        \centering
        \includestandalone{figures/decomp_UmDR}
        \caption{
            The decomposition of an independent set of
            $\UU(B_n)\mergecore\DR(B_{n-1})$ when a cell in
            the topmost row is active.
        }%
        \label{fig:decomp_UmDR}
    \end{figure}

    Otherwise, the graph decomposes as shown on
    Figure~\ref{fig:decomp_UmDR} and we get a contribution of
    \[
        \frac{x(s+1)y}{1-x(y+1)} x(z+1)\frac{\gfUmDR(x,y,z,s)-1}{x(s+1)}
    \]
    where the $\frac{\gfUmDR(x,y,z,s)}{x(s+1)}$ term is due to the pink region
    which is precisely a smaller core, where a single disconnected cell
    has been removed.
    Hence, $\gfUmDR(x,y,z,s,t)$ satisfies
    \[
        \gfUmDR(x,y,z,s) = 1 + x(s+1)\gfUmDR(x,y,z,s) +
        \frac{xy(s+1)}{1-x(y+1)}x(z+1)\frac{\gfUmDR(x,y,z,s)-1}{x(s+1)}
    \]
    as claimed in the proposition.
\end{proof}

From Theorem~\ref{thm:inf_ru_2143} and Proposition~\ref{prop:gfUmDR}, we get the
enumeration of $\Av(r_u, 2143, 1\oplus P)$.
\begin{corollary}\label{cor:ru_2143}
    The generating function of $\Av(2134, 2413, 1\oplus P)$ is
    \[ \gfUmDR\left(x, \frac{x}{1-x},\frac{B(x)-(1+x)}{x}, B(x)-1\right)\]
    where $B(x)$ is the generating function of $\Av(2314, 2143, P)$.
\end{corollary}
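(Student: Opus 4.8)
The displayed class should be read as $\Av(r_u,2143,1\oplus P)=\Av(2314,2143,1\oplus P)$: this is the class treated throughout Section~\ref{sec:ru_2143}, the one named in the sentence immediately preceding the statement, and the one whose basis matches the base $B(x)$ (the generating function of $\Av(2314,2143,P)$) appearing in the formula. The printed ``$\Av(2134,2413,1\oplus P)$'' is a carryover from Corollary~\ref{cor:rd_2134}. With this reading, the plan is to combine the bijection of Theorem~\ref{thm:inf_ru_2143} with the multivariate count of Proposition~\ref{prop:gfUmDR}, and then to specialize each label-variable to the generating function recording the weights carried at that label.

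First I would invoke Theorem~\ref{thm:inf_ru_2143}, which gives for each $n$ a bijection between $\Avn(r_u,2143,1\oplus P)$ and the weighted labelled independent sets $\WI_\psi(\UU(B_n)\mergecore\DR(B_{n-1}),\Avp(21),\Avp(r_u,2143,P)\setminus\{1\},\Avp(r_u,2143,P))$, where sizes correspond because $\SE$ (via $f$) preserves the total number of points. Taking the disjoint union over $n\geq 0$ and passing to generating functions, the left-hand sides assemble into the generating function of $\Av(r_u,2143,1\oplus P)$, so it suffices to evaluate the generating function of the right-hand sides. Proposition~\ref{prop:gfUmDR} supplies $\gfUmDR(x,y,z,s)$ (its statement momentarily mislabels the merge as $\D\mergecore\UR$, but it is the count for the core $\UU(B_n)\mergecore\DR(B_{n-1})$ of the theorem), in which $x$ records the grid size, equivalently the number of left-to-right minima, while $y$, $z$, $s$ record the numbers of active cells carrying those labels. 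The weighted generating function is therefore obtained by the standard specialization: replace each label-variable by the generating function tracking the contribution of the weights on the cells of that label, so that the product over active cells reproduces the weight sizes.

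The three substitutions are then read off from the weight families in the theorem. Label $y$ marks the off-diagonal cells, whose weights lie in $\Avp(21)$; since $\Av(21)$ consists of the increasing permutations, this family has generating function $\frac{x}{1-x}$, the second argument. Label $s$ marks the diagonal cells with no column partner, whose weights range over all of $\Avp(r_u,2143,P)$, contributing $B(x)-1$, the fourth argument. Label $z$ marks the diagonal cells that do have a column partner; here $f$ writes the weight as $\alpha m\beta$ with $m$ its maximum, deposits $\alpha\beta$ in the cell and moves $m$ into the surrounding column. Consequently the admissible weights form $\Avp(r_u,2143,P)\setminus\{1\}$, with generating function $(B(x)-1)-x=B(x)-(1+x)$; but the extracted maximum $m$ is a point that $\gfUmDR$ already counts through its grid variable, so to avoid counting it twice one divides by $x$, giving $\frac{B(x)-(1+x)}{x}$, the third argument. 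Substituting these into $\gfUmDR(x,y,z,s)$ yields exactly the claimed expression.

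I expect the bookkeeping around the label-$z$ substitution to be the only genuine obstacle. One must verify that under $f$ every power of $x$ produced by the recursion of Proposition~\ref{prop:gfUmDR} corresponds to precisely one point of $f(I)$, and in particular that the maximum extracted from each label-$z$ weight is matched against the single grid point that the recursion allocates for that cell's column, so that the lone factor $\frac{1}{x}$ is both necessary and sufficient. The label-$y$ and label-$s$ specializations, together with the identification of $\Avp(21)$ as the increasing permutations, are routine, and the whole derivation runs in parallel to Corollaries~\ref{cor:gf_rdcu} and~\ref{cor:rd_2134}.
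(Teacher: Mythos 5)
Your proposal is correct and coincides with the paper's own (implicit) derivation: the corollary is exactly Theorem~\ref{thm:inf_ru_2143} combined with Proposition~\ref{prop:gfUmDR} via the standard label-variable substitutions, and you rightly identify both misprints (the class is $\Av(2314,2143,1\oplus P)$, and the core in the proposition is $\UU(B_n)\mergecore\DR(B_{n-1})$). One small correction to your bookkeeping at the $z$-label: the factor $\frac{1}{x}$ arises because the extracted maximum $m$ is never placed as a point of $f(I)$ at all --- it is a phantom marking where the column points above (already counted by the $y$-cell weights) split $\alpha$ from $\beta$, so each $z$-weight contributes one fewer point than its size --- not because the grid variable $x$ of $\gfUmDR$ double-counts it, since that variable tracks only the left-to-right minima.
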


Using the corollary above, we can compute $A(x)$, the generating function of
$\Av(2314, 2143)$ that was first enumerated by \cite{MR2156679}.
In this example, $P$ is empty. Hence, $B(x)=A(x)$.
We get that the generating
function $A(x)$ satisfies
\[ A(x) = \gfUmDR\left(x, \frac{x}{1-x}, \frac{A(x)-(1+x)}{x}, A(x)-1\right).\]
This equation can be solved explicitly to find the generating function
\[\frac{1-\sqrt{1 -8x + 16x^2 -8x^3}}{4(x - x^2)}.\]
This generating function gives the enumeration that appears in
OEIS as \oeis{A109033}.

\section{Unbalanced Wilf-equivalence}\label{sec:wilf}

A combination of many of our results can be used to prove that some classes
have the same enumeration.
When two classes have the same enumeration, we say that they are
\emph{Wilf-equivalent}. This equivalence is said to be \emph{unbalanced} if
one of the bases has a pattern of size $k$, but the other basis does not, like
in the next theorem.

\begin{theorem}\label{thm:wilf-same-core}
    The classes $\Av(2413, 2134,1234)$ and $\Av(2413, 2134, 1324,12534)$ are
    Wilf-equivalent.
\end{theorem}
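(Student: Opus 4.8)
The plan is to recognize both classes as instances of the template $\Av(r_d, 2134, 1\oplus P)$ studied in Section~\ref{sec:rd_2134} and to run them both through Corollary~\ref{cor:rd_2134}. Writing $1234 = 1\oplus 123$, $1324 = 1\oplus 213$ and $12534 = 1\oplus 1423$, the first class is $\Av(r_d, 2134, 1\oplus P_A)$ with $P_A=\{123\}$ and the second is $\Av(r_d, 2134, 1\oplus P_B)$ with $P_B=\{213,1423\}$. First I would check that $P_A$ and $P_B$ meet the two conditions imposed in Section~\ref{sec:rd_2134}: $1423$ is already listed there as an example, and $123$ and $213$ are verified directly (both avoid the decreasing mesh pattern, and neither ends in a decreasing block of length at least two, so neither lies in $\perms\oplus(\Avp(12)\setminus\{1\})$). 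Corollary~\ref{cor:rd_2134} then expresses both generating functions through the \emph{same} substitution into $\gfDmUR$,
\[ \gfDmUR\left(x, \frac{x}{1-x}, \frac{B(x)-(1+x)}{x}, C(x)-1, B(x)-1\right), \]
where $B$ enumerates $\Av(r_d, 2134, P)$ and $C$ enumerates $\Av(213, \bstrip{P})$. Hence it suffices to show that the two classes feed in the same pair $(B,C)$.

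The $C$-factor is immediate. Since $\bstrip{123}=12$ we get $\Av(213,12)=\Av(12)$, while $\bstrip{213}=21$ and $\bstrip{1423}=1423$ give $\Av(213,21,1423)=\Av(21)$; both monotone classes have generating function $1/(1-x)$, so $C_A=C_B$. For the $B$-factor I would simplify by pattern containment. As $123$ is a pattern of $2134$, the first $B$-class collapses to $\Av(2413,123)$; as $213$ is a pattern of both $2134$ and $2413$, the second $B$-class collapses all the way down to $\Av(213,1423)$. Thus the whole theorem reduces to the single identity
\[ \lvert\Av_n(123, 2413)\rvert = \lvert\Av_n(213, 1423)\rvert \quad \text{for all } n\geq 0. \]

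The main work is this reduced identity, and I would attack its two sides with different tools. The left side again fits the framework: $\Av(123,2413)=\Av(r_d,2134,1\oplus 12)$, so Corollary~\ref{cor:rd_2134} applies once more with $P=\{12\}$, where $\Av(r_d,2134,12)=\Av(12)$ and $\Av(213,\bstrip{12})=\Av(213,1)=\{\varepsilon\}$. Substituting these (and using the functional equation of Proposition~\ref{prop:gfDmUR}) collapses $\gfDmUR$ to the rational function $\frac{(1-x)(1-2x)}{1-4x+4x^2-x^3}$, i.e.\ the sequence $1,1,2,5,13,34,89,\dots$ of odd-indexed Fibonacci numbers. The right side $\Av(213,1423)$ lies \emph{outside} the $1\oplus P$ framework, since $213$ is not of the form $1\oplus\alpha$ and so no inflation theorem of the paper applies to it directly; this is the step I expect to be the real obstacle. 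I would resolve it by the classical root decomposition $\sigma=L\,1\,R$ of a $213$-avoider (every entry of $L$ exceeding every entry of $R$, with $L,R\in\Av(213)$), analyse where an occurrence of $1423=1\oplus 312$ can sit relative to this split, and derive a functional equation whose solution is again $\frac{(1-x)(1-2x)}{1-4x+4x^2-x^3}$. Matching the two closed forms yields $B_A=B_B$; together with $C_A=C_B$ this shows that Corollary~\ref{cor:rd_2134} sends both original classes to the same substitution into $\gfDmUR$, and hence they have equal generating functions, establishing the Wilf-equivalence.
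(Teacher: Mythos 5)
Your first half coincides exactly with the paper's own proof: both classes are fed through Corollary~\ref{cor:rd_2134}, the $C$-factors are immediately equal to $\frac{1}{1-x}$, and everything reduces to the identity $\lvert\Av_n(2413,123)\rvert = \lvert\Av_n(213,1423)\rvert$ (the paper's $B_1(x)=B_2(x)$). Your treatment of the left-hand side is also sound and is a nice variation on the paper: re-applying Corollary~\ref{cor:rd_2134} with $P=\{12\}$ is legitimate (one checks that $12$ avoids the relevant mesh pattern and is not in $\perms\oplus(\Avp(12)\setminus\{1\})$), and substituting $B(x)=\frac{1}{1-x}$, $C(x)=1$ into Proposition~\ref{prop:gfDmUR} indeed collapses to $\frac{(1-x)(1-2x)}{1-4x+4x^2-x^3}$; the paper instead reaches the equivalent form $\gfUDC\left(x,\frac{x}{1-x},1\right)$ via Corollary~\ref{cor:gf_rucupi}.

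The gap is on the right-hand side. You assert that $\Av(213,1423)$ ``lies outside the $1\oplus P$ framework'' and then defer the entire burden to an unexecuted plan: a root decomposition of $213$-avoiders, an unspecified case analysis of where $1423$ can sit, and a functional equation whose solution you claim but never derive. As written, the key identity is asserted rather than proven, and it is precisely the step you yourself flag as ``the real obstacle''. Moreover, the premise is false: the paper's machinery does reach this class, because all of its corollaries apply up to the dihedral symmetries of the square. Concretely, $\{213,1423\}$ is mapped by a symmetry (reverse-complement followed by inverse) to $\{132,3124\}$, and $\Av(132,3124)=\Av(r_d,c_d,c_u,1\oplus 21)$ satisfies the hypotheses of Corollary~\ref{cor:gf_rdcdpi} with $P=\{21\}$, since $\bstrip{P}=\{21\}$ is sum-indecomposable. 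That corollary gives the generating function $\gfUDC\left(x,\frac{x}{1-x},1\right)$ directly, the same expression as for $\Av(2413,123)$, with no closed forms or ad hoc decompositions needed. This symmetry observation is exactly how the paper closes the argument, and noticing it would have turned your sketch into a complete proof.
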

\begin{proof}
    The first class in our notation is $\Av(r_d, 2134, 1234)$. Hence, by
    Corollary~\ref{cor:rd_2134}, its generating function $A_1(x)$ is
    \[ \gfDmUR\left(x, \frac{x}{1-x}, \frac{B_1(x)-(1+x)}{x}, C_1(x)-1,
        B_1(x)-1\right)\]
    where
    \begin{itemize}
        \item $B_1(x)$ is the generating function of $\Av(r_d, 2134, 123) = \Av(r_d,123)$.
        \item $C_1(x)$ is the generating function of $\Av(213, 12) = \Av(12)$.
    \end{itemize}

    The second class is $\Av(r_d, 2134, 1324,12534)$. Again, by
    Corollary~\ref{cor:rd_2134}, its generating function $A_2(x)$ is
    \[ \gfDmUR\left(x, \frac{x}{1-x}, \frac{B_2(x)-(1+x)}{x}, C_2(x)-1,
        B_2(x)-1\right)\]
    where
    \begin{itemize}
        \item $B_2(x)$ is the generating function of $\Av(r_d, 2134, 213, 1423) = \Av(213,1423)$.
        \item $C_2(x)$ is the generating function of $\Av(213, 21, 1423) = \Av(21)$.
    \end{itemize}

    The class $\Av(r_d,123)$ is the same class as $\Av(r_u,c_u,r_d,123)$ since
    $123$ is contained in $r_u$ and $c_u$.
    Moreover, the last class is a symmetry of $\Av(r_u, c_u, c_d, 123)$.
    Hence, by Corollary~\ref{cor:gf_rucupi},
    \[B_1(x)= \gfUDC\left(x, \frac{x}{1-x}, 1\right).\]
    We also have that $\Av(213,1423)$ is a symmetry of $\Av(132, c_u)$ which is
    the same class as $\Av(r_d, c_d, c_u, 132)$.
    Hence, by Corollary~\ref{cor:gf_rdcdpi},
    \[B_2(x) = \gfUDC\left(x, \frac{x}{1-x},
        \frac{\frac{x}{1-x}}{\frac{x}{1-x}}\right)
        = \gfUDC\left(x, \frac{x}{1-x}, 1\right).\]

    We showed that $B_1(x)=B_2(x)$ and we know that
    $C_1(x)=C_2(x)=\frac{1}{1-x}$.
    Therefore, we have that $A_1(x)=A_2(x)$.
\end{proof}

\begin{theorem}\label{thm:wilf-diff-core}
    The classes $\Av(2134, 2413)$ and $\Av(2314, 3124, 13524, 12435)$ are Wilf-equivalent.
\end{theorem}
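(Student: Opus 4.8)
The plan is to compute the generating function of each class by feeding it through the corollaries of the previous sections, and then to reconcile the two resulting expressions. For the first class, observe that $\Av(2134,2413)=\Av(r_d,2134)$, which is exactly the case $P=\emptyset$ of Corollary~\ref{cor:rd_2134}. Writing $A_1(x)$ for its generating function, that corollary gives
\[
A_1(x)=\gfDmUR\!\left(x,\tfrac{x}{1-x},\tfrac{A_1(x)-(1+x)}{x},C(x)-1,A_1(x)-1\right),
\]
where $B(x)=A_1(x)$ since $\Av(2134,2413,\emptyset)=\Av(2134,2413)$, and $C(x)=\frac{1-\sqrt{1-4x}}{2x}$ is the Catalan generating function of $\Av(213)$ because $\bstrip{\emptyset}=\emptyset$. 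This is the fixed-point equation already solved in the worked example following Corollary~\ref{cor:rd_2134}, whose solution is the generating function of \oeis{A165538}.

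For the second class, I would first decompose each basis element through its leading fixed point: $13524=1\oplus 2413$ and $12435=1\oplus 1324$. Hence $\Av(2314,3124,13524,12435)=\Av(r_u,c_u,1\oplus P)$ with $P=\{2413,1324\}$, and since both $2413$ and $1324$ are skew-indecomposable, Corollary~\ref{cor:gf_upcore} applies and yields $A_2(x)=\gfU(x,A_2'(x)-1)$, where $A_2'(x)$ is the generating function of $\Av(r_u,c_u,2413,1324)$. I would then simplify $A_2'$ via the inverse symmetry: since $2314^{-1}=3124$, $3124^{-1}=2314$, $2413^{-1}=3142=c_d$ and $1324^{-1}=1324$, the class $\Av(r_u,c_u,2413,1324)$ has the same enumeration as $\Av(r_u,c_u,c_d,1324)$. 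Now $1324=1\oplus 213$ with $213$ skew-indecomposable, so Corollary~\ref{cor:gf_rucupi} gives $A_2'(x)=\gfUDC(x,B(x)-1,1)$, where $B(x)$ enumerates $\Av(r_u,c_u,c_d,213)$. Because $213$ is contained in each of $r_u=2314$, $c_u=3124$ and $c_d=3142$, this class collapses to $\Av(213)$, so $B(x)=C(x)$ is again Catalan. Assembling the pieces,
\[
A_2(x)=\gfU\!\left(x,\gfUDC\!\left(x,C(x)-1,1\right)-1\right).
\]

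It then remains to prove $A_1(x)=A_2(x)$. Unlike Theorem~\ref{thm:wilf-same-core}, the two classes are built from genuinely different core graphs (the merge $\D\mergecore\UR$ on one side, the up-core inflated by $\UDC$ on the other), so there is no common weighted-independent-set model to match term by term, and the equality is a coincidence at the level of generating functions. I would finish by a direct algebraic verification: $\gfDmUR$ is rational in its arguments by Proposition~\ref{prop:gfDmUR}, $\gfUDC$ is the explicit rational function of Proposition~\ref{prop:gf_UDR}, and $\gfU$ satisfies the quadratic~\eqref{eq:fxy}. Substituting $C(x)=\frac{1-\sqrt{1-4x}}{2x}$ into the closed form for $A_2$ and eliminating, one checks that $A_2$ satisfies the same polynomial equation over $\mathbb{Q}(x)$ as the fixed point $A_1$, equivalently that both equal the \oeis{A165538} generating function. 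I expect this last reconciliation to be the main obstacle, precisely because the two expressions arise from unrelated decompositions; the remaining work, namely the leading-$1$ decompositions, the inverse symmetry, and the collapse to $\Av(213)$, is routine.
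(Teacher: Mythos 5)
Your proposal is correct and follows essentially the same route as the paper's proof: both express $A_1$ via Corollary~\ref{cor:rd_2134} with $P=\emptyset$, reduce the second class through Corollary~\ref{cor:gf_upcore} to $\Av(r_u,c_u,2413,1324)$, pass by the inverse symmetry to $\Av(r_u,c_u,c_d,1324)$, apply Corollary~\ref{cor:gf_rucupi} with the Catalan generating function for $\Av(213)$, and finish with an algebraic verification that the two expressions agree. Your write-up is in fact slightly more careful than the paper's (you keep the third argument of $\gfUDC$ and spell out the skew-indecomposability checks), but the argument is the same.
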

\begin{proof}
    In our notation, the two classes are $\Av(r_d,2134)$ and
    $\Av(r_u, c_u, 13524, 12435)$.
    Let $A_1(x)$ be the generating
    function of $\Av(2134, r_d)$ as computed in Section~\ref{sec:rd_2134}.
    Let $A_2(x)$ be the generating function of $\Av(r_u, c_u, 13524,12435)$.
    By Corollary~\ref{cor:gf_upcore},
    $A_2(x)=\gfU(x,  B(x)-1)$ where  $B(x)$  is  the  generating
    function  of  $\Av(r_u,  c_u,  r_d,  1324)$.
    As this class is a symmetry of $\Av(r_u, c_u, c_d, 1324)$,
    by Corollary~\ref{cor:gf_rucupi}, $B(x)=\gfUDC(x, C(x)-1)$ where $C(x)$ is the
    generating function for $\Av(r_u, c_u, r_d, 213)$. The previous class
    is in fact $\Av(213)$. Hence, $C(x)$ is the generating function for the Catalan
    numbers. Rewinding the previous step, we can compute $A_2(x)$ explicitly.
    A simple verification then shows that $A_1(x)=A_2(x)$.
\end{proof}

\section{Conclusion}\label{sec:conclusion}

The technique of enumeration studied in this paper has been implemented in
the python package \emph{Permuta}, developed by the \cite{permuta}.
To test if any of the theorems in this paper
apply to a basis one can use the code snippet below.
It will print a reference to any of the results in the paper that apply to
any symmetry of the basis of interest.

\lstset{
  backgroundcolor=\color{white},   
  basicstyle=\footnotesize\ttfamily,        
  breakatwhitespace=false,         
  breaklines=true,                 
  captionpos=b,                    
  commentstyle=\color{green},    
  keepspaces=true,                 
  keywordstyle=\color{blue},       
  rulecolor=\color{black},         
}
\lstinputlisting[language=Python]{find_strategy.py}

\paragraph*{Tracking more bases with the same cores}

Let $\pi$ be a skew-indecomposable permutation.
In Theorem~\ref{thm:inf_upcore}, we described the structure of bases of the form
$\{r_u, c_u, 1\oplus\pi\}$.
It seems possible to enumerate classes whose basis are of the form
$\{r_u, c_u, 21\oplus\pi\}$. In this
case, the staircase encoding would contain permutations avoiding
$\{r_u, c_u, \pi\}$ in the cells that are not in the leading diagonal and
permutations avoiding $\{r_u, c_u, 21\oplus\pi\}$ in the cells
in the leading diagonal.
Hence, tracking the vertices of the independent set that are in the leading
diagonal would be sufficient to enumerate this class.
It is likely that this reasoning can be extended to replace $21$
with an arbitrary decreasing sequence.

\paragraph*{Increasing the size of the patterns.}
In Section~\ref{sec:3to4}, we go from size $3$ to size $4$ patterns.
To do so, we gave a set of patterns of size $4$ that
put the same constraints on the staircase grid as $123$ did. This idea is not
limited to size $4$ patterns. We can easily see that
the nine patterns of size $5$ in Figure~\ref{fig:size5} enforce the same
constraints as $123$ on the staircase grid. Therefore, one can expect the
results to generalize to greater size. However, to do so,
two major issues need to be overcome. First, one needs to be sure that if
a pattern occurs in a permutation, then there needs to be an occurrence of the
pattern using the left-to-right minima of the permutation.
This can be done with the addition of other patterns to the basis.
Computation shows that for the size $5$ cases, $7$ patterns is the
smallest number of patterns that can be added to do so. Secondly,
the technique will not give information about the permutations with exactly
two left-to-right minima.

\begin{figure}[htpb]
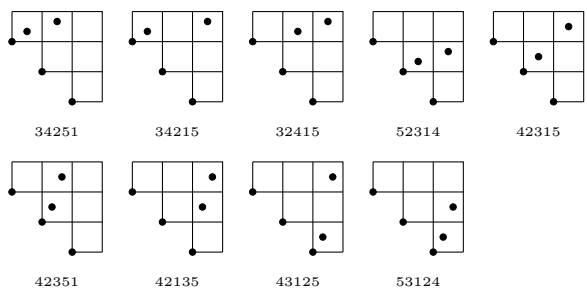

  \centering
  \includestandalone{figures/size5}
  \caption{Patterns of size $5$ that enforce the up-core constraint.}%
  \label{fig:size5}
\end{figure}

\paragraph*{Independent sets on boundary grids.}
The letter $\pi_i$ in a permutation $\pi$ is a right-to-left maximum if
$\pi_j<\pi_i$ of all $j>i$. Building a skew-shaped grid from the
left-to-right minima and the right-to-left maxima we get the \emph{boundary
  encoding} of the permutation. Figure~\ref{fig:boundary_encoding} shows an
example of a boundary encoding.
One might be able to use the boundary encoding to
generalize the method of the staircase encoding, but in this new case, any
permutation avoiding $123$ could potentially be a boundary.
\begin{figure}[htpb]
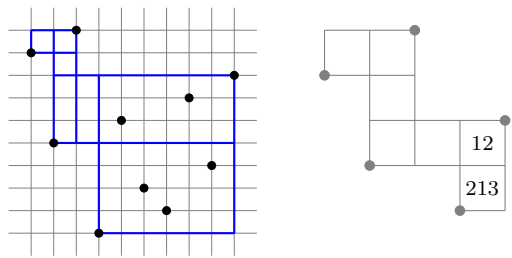

  \centering
  \includestandalone{figures/bound_enc}
  \caption{Boundary encoding of the permutation $95(10)1632748$.}%
  \label{fig:boundary_encoding}
\end{figure}

\paragraph*{Wilf-equivalence and bijective proof.}
In Section~\ref{sec:wilf}, we uncovered two Wilf-equivalences by computing the
generating functions with our results. The proof of
Theorem~\ref{thm:wilf-same-core} nicely highlights a structural argument for the
Wilf-equivalence as both classes are built from the same core. However, it is
not the case in the proof of Theorem~\ref{thm:wilf-diff-core}.
It would be interesting to establish a bijection between the two classes using
the core structure.

\paragraph*{Random sampling}

Most of the techniques used throughout can be directly, or with minor
changes, translated to the language of combinatorial specifications
(see \cite{flajolet2009analytic}).
This opens the door to many existing tools, including, but certainly not
limited to generating the permutations in a class, or uniformly sampling
permutations in a class (see \cite{MR1290534, MR2095975}).

For example, it is not too hard to see how to convert
the argument used in \cite{bean2015pattern} to find $\gfU(x,y)$, the generating
function for independent sets of the down-cores, to a specification. From
Corollary~\ref{cor:gf_downcore}, we know that $A(x)$, the generating
function of $\Av(r_d,c_d)$, satisfies $A(x)=\gfU(x, A(x)-1)$. If we work with
the $q,t$-analog of $A(x)$ where $q$ tracks
the number of left-to-right minima of the permutations
and $t$ tracks the number of active cells, that is
\[
  A(x,q,t) = \gfU(qx,tA(x)-t),
\]
then by looking at this distribution we can pick randomly the number of minima,
and active cells. We can then use the specification for $\gfU(x,y)$,
and standard techniques to uniformly sample an independent set from the
down-core of appropriate size. Finally, by recursively choosing the
permutations to fill the corresponding active cells in the same manner, we will
be able to sample uniformly from $\Av(r_d,c_d)$.

This approach could be applied to the methods within our paper, perhaps
requiring some extra `bookkeeping' along the way.

\bibliographystyle{abbrvnat}
\bibliography{references.bib}

\begin{thebibliography}{17}
\providecommand{\natexlab}[1]{#1}
\providecommand{\url}[1]{\texttt{#1}}
\expandafter\ifx\csname urlstyle\endcsname\relax
  \providecommand{\doi}[1]{doi: #1}\else
  \providecommand{\doi}{doi: \begingroup \urlstyle{rm}\Url}\fi

\bibitem[Albert and Atkinson(2005)]{substitutiondecomposition}
M.~H. Albert and M.~D. Atkinson.
\newblock Simple permutations and pattern restricted permutations.
\newblock \emph{Discrete Math.}, 300\penalty0 (1-3):\penalty0 1--15, 2005.

\bibitem[Albert et~al.(2014)Albert, Atkinson, and Vatter]{MR3211768}
M.~H. Albert, M.~D. Atkinson, and V.~Vatter.
\newblock Inflations of geometric grid classes: three case studies.
\newblock \emph{Australas. J. Combin.}, 58:\penalty0 24--47, 2014.
\newblock ISSN 1034-4942.

\bibitem[Ardal et~al.(2019)Ardal, Bean, Bjarnason, Gudmundsson, Kristinsson,
  Nadeau, Tannock, Shimomura-Magnusson, and Ulfarsson]{permuta}
R.~P. Ardal, C.~Bean, A.~B. Bjarnason, B.~Gudmundsson, B.~J. Kristinsson,
  E.~Nadeau, M.~Tannock, T.~K. Shimomura-Magnusson, and H.~Ulfarsson.
\newblock Permuta.
\newblock \url{https://pypi.org/project/permuta/}, 2019.

\bibitem[Atkinson and Stitt(2002)]{MR1948771}
M.~D. Atkinson and T.~Stitt.
\newblock Restricted permutations and the wreath product.
\newblock \emph{Discrete Math.}, 259\penalty0 (1-3):\penalty0 19--36, 2002.
\newblock ISSN 0012-365X.
\newblock \doi{10.1016/S0012-365X(02)00443-0}.
\newblock URL \url{https://doi.org/10.1016/S0012-365X(02)00443-0}.

\bibitem[Bassino et~al.(2017)Bassino, Bouvel, Pierrot, Pivoteau, and
  Rossin]{autosubdecomp}
F.~Bassino, M.~Bouvel, A.~Pierrot, C.~Pivoteau, and D.~Rossin.
\newblock An algorithm computing combinatorial specifications of permutation
  classes.
\newblock \emph{Discrete Appl. Math.}, 224:\penalty0 16--44, 2017.
\newblock ISSN 0166-218X.
\newblock \doi{10.1016/j.dam.2017.02.013}.
\newblock URL \url{https://doi.org/10.1016/j.dam.2017.02.013}.

\bibitem[Bean et~al.(2020)Bean, Tannock, and Ulfarsson]{bean2015pattern}
C.~Bean, M.~Tannock, and H.~Ulfarsson.
\newblock Pattern avoiding permutations and independent sets in graphs.
\newblock \emph{J. Comb.}, 11\penalty0 (4):\penalty0 705--732, 2020.
\newblock ISSN 2156-3527.
\newblock \doi{10.4310/JOC.2020.v11.n4.a7}.
\newblock URL \url{https://doi.org/10.4310/JOC.2020.v11.n4.a7}.

\bibitem[Br\"{a}nd\'{e}n and Claesson(2011)]{branden2011meshpattern}
P.~Br\"{a}nd\'{e}n and A.~Claesson.
\newblock Mesh patterns and the expansion of permutation statistics as sums of
  permutation patterns.
\newblock \emph{Electron. J. Combin.}, 18\penalty0 (2):\penalty0 Paper 5, 14,
  2011.

\bibitem[Callan et~al.(2017)Callan, Mansour, and Shattuck]{MR3633253}
D.~Callan, T.~Mansour, and M.~Shattuck.
\newblock Wilf classification of triples of 4-letter patterns {II}.
\newblock \emph{Discrete Math. Theor. Comput. Sci.}, 19\penalty0 (1):\penalty0
  Paper No. 6, 44, 2017.
\newblock ISSN 1365-8050.

\bibitem[Duchon et~al.(2004)Duchon, Flajolet, Louchard, and
  Schaeffer]{MR2095975}
P.~Duchon, P.~Flajolet, G.~Louchard, and G.~Schaeffer.
\newblock Boltzmann samplers for the random generation of combinatorial
  structures.
\newblock \emph{Combin. Probab. Comput.}, 13\penalty0 (4-5):\penalty0 577--625,
  2004.
\newblock ISSN 0963-5483.
\newblock \doi{10.1017/S0963548304006315}.
\newblock URL \url{https://doi.org/10.1017/S0963548304006315}.

\bibitem[Flajolet and Sedgewick(2009)]{flajolet2009analytic}
P.~Flajolet and R.~Sedgewick.
\newblock \emph{Analytic combinatorics}.
\newblock Cambridge University Press, 2009.

\bibitem[Flajolet et~al.(1994)Flajolet, Zimmerman, and Van~Cutsem]{MR1290534}
P.~Flajolet, P.~Zimmerman, and B.~Van~Cutsem.
\newblock A calculus for the random generation of labelled combinatorial
  structures.
\newblock \emph{Theoret. Comput. Sci.}, 132\penalty0 (1-2):\penalty0 1--35,
  1994.
\newblock ISSN 0304-3975.
\newblock \doi{10.1016/0304-3975(94)90226-7}.
\newblock URL \url{https://doi.org/10.1016/0304-3975(94)90226-7}.

\bibitem[Hilmarsson et~al.(2015)Hilmarsson, J\'{o}nsd\'{o}ttir,
  Sigur\dh{}ard\'{o}ttir, Vi\dh{}arsd\'{o}ttir, and Ulfarsson]{shl}
I.~Hilmarsson, I.~J\'{o}nsd\'{o}ttir, S.~Sigur\dh{}ard\'{o}ttir,
  L.~Vi\dh{}arsd\'{o}ttir, and H.~Ulfarsson.
\newblock Wilf-classification of mesh patterns of short length.
\newblock \emph{Electron. J. Combin.}, 22\penalty0 (4):\penalty0 Paper 4.13,
  27, 2015.
\newblock \doi{10.37236/4678}.
\newblock URL \url{https://doi.org/10.37236/4678}.

\bibitem[Kremer(2000)]{MR1754331}
D.~Kremer.
\newblock Permutations with forbidden subsequences and a generalized
  {S}chr\"{o}der number.
\newblock \emph{Discrete Math.}, 218\penalty0 (1-3):\penalty0 121--130, 2000.
\newblock ISSN 0012-365X.
\newblock \doi{10.1016/S0012-365X(99)00302-7}.
\newblock URL \url{https://doi.org/10.1016/S0012-365X(99)00302-7}.

\bibitem[Kremer(2003)]{MR1997910}
D.~Kremer.
\newblock Postscript: ``{P}ermutations with forbidden subsequences and a
  generalized {S}chr\"{o}der number'' [{D}iscrete {M}ath. {\bf 218} (2000), no.
  1-3, 121--130; {MR}1754331 (2001a:05005)].
\newblock \emph{Discrete Math.}, 270\penalty0 (1-3):\penalty0 333--334, 2003.
\newblock ISSN 0012-365X.
\newblock \doi{10.1016/S0012-365X(03)00124-9}.
\newblock URL \url{https://doi.org/10.1016/S0012-365X(03)00124-9}.

\bibitem[Le(2005)]{MR2156679}
I.~Le.
\newblock Wilf classes of pairs of permutations of length 4.
\newblock \emph{Electron. J. Combin.}, 12:\penalty0 Research Paper 25, 26,
  2005.
\newblock URL
  \url{http://www.combinatorics.org/Volume_12/Abstracts/v12i1r25.html}.

\bibitem[Mansour and Shattuck(2017)]{MR3659386}
T.~Mansour and M.~Shattuck.
\newblock Nine classes of permutations enumerated by binomial transform of
  {F}ine's sequence.
\newblock \emph{Discrete Appl. Math.}, 226:\penalty0 94--105, 2017.
\newblock ISSN 0166-218X.
\newblock \doi{10.1016/j.dam.2017.04.015}.
\newblock URL \url{https://doi.org/10.1016/j.dam.2017.04.015}.

\bibitem[OEIS(2021)]{oeis}
OEIS.
\newblock {OEIS Foundation Inc.\ (2021)}, {T}he {O}nline {E}ncyclopedia of
  {I}nteger {S}equences.
\newblock \url{https://oeis.org/}, 2021.

\end{thebibliography}

\end{document}